\DeclarePairedDelimiter\floor{\lfloor}{\rfloor}
      \numberwithin{equation}{section}
      \theoremstyle{plain}
      \newtheorem{theorem}{Theorem}[section]
            \newtheorem{thm}[theorem]{Theorem}
      \newtheorem{lemma}[theorem]{Lemma}
      \newtheorem{lem}[theorem]{Lemma}
      \newtheorem{corollary}[theorem]{Corollary}
      \newtheorem{proposition}[theorem]{Proposition}
      \newtheorem{prop}[theorem]{Proposition}
            \newtheorem{claim}[theorem]{Claim}
      \theoremstyle{definition}
      \theoremstyle{remark}
      \newtheorem{remark}[theorem]{Remark}
\renewcommand{\P}{\mathbb P}
\newcommand{\R}{\mathbb R}
\newcommand{\E}{\mathbb E}
\newcommand{\Z}{\mathbb Z}
\newcommand{\N}{\mathbb N}
\newcommand{\mb}{\mathbf}
\renewcommand{\L}{\Lambda}
\newcommand{\lr}[4]{#3\xleftrightarrow[#1]{#2} #4}
     \newcommand{\nlr}[4]{#3\mathrel{\mathop{\centernot\longleftrightarrow}_{#1}^{#2}} #4}
\newcommand\couprad{10} 
\newcommand\rangeofdep{40}
\newcommand\pivrange{\the\numexpr\couprad + \rangeofdep\relax}
\titleformat{\subsection}[runin]{\normalfont\bfseries}{\thesubsection.}{.5em}{}[.]\titlespacing{\subsection}{0pt}{2ex plus .1ex minus .2ex}{.8em}
\titleformat{\subsubsection}[runin]{\normalfont\bfseries}{\thesubsubsection.}{.5em}{}[.]
\titlespacing{\subsubsection}{0pt}{2ex plus .1ex minus .2ex}{.8em}
\title{{\textbf{\normalsize{A CHARACTERIZATION OF STRONG PERCOLATION  VIA DISCONNECTION}}}}
\date{}
\begin{document}
	\thispagestyle{empty}
	\maketitle
	\vspace{0.1cm}
	\begin{center}
		\vspace{-1.7cm}
		Hugo Duminil-Copin$^{1,2}$, Subhajit Goswami$^3$, Pierre-Fran\c{c}ois Rodriguez$^4$,\\[1em] Franco Severo$^{5}$ and Augusto Teixeira$^6$

	\end{center}
	\vspace{0.1cm}
	\begin{abstract}
We consider a percolation model, the vacant set $\mathcal{V}^u$ of random interlacements on $\Z^d$, $d \geq 3$, in the regime of parameters $u>0$ in which it is strongly percolative. By definition, such values of $u$ pinpoint a robust subset of the super-critical phase, with strong quantitative controls on large local clusters. In the present work, we give a new charaterization of this regime in terms of a single property, monotone in $u$, involving a disconnection estimate for $\mathcal{V}^u$. A key aspect is to exhibit a gluing property for large local clusters from this information alone, and a major challenge in this undertaking is the fact that the conditional law of $\mathcal{V}^u$ exhibits degeneracies. As one of the main novelties of this work, the gluing technique we develop to merge large clusters accounts for such effects. In particular, our methods do not rely on the widely assumed finite-energy property, which the set $\mathcal{V}^u$ does not possess. The charaterization we derive plays a decisive role in the proof of a lasting conjecture regarding the coincidence of various critical parameters naturally associated to $\mathcal{V}^u$ in the companion article \cite{RI-I}.
\end{abstract}

	\vspace{1.5cm}
	
	\begin{flushleft}
		\thispagestyle{empty}
		\vspace{0.1cm}
		{\footnotesize
			\noindent\rule{6cm}{0.35pt} \hfill {\normalsize August 2023} \\[2em]
			
			\begin{multicols}{2}
				\small
				$^1$Institut des Hautes \'Etudes Scientifiques
				\\   35, route de Chartres \\ 91440 -- Bures-sur-Yvette, France.\\ \url{duminil@ihes.fr}\\[2em]
				
				$^2$Universit\'e de Gen\`eve\\
				Section de Math\'ematiques\\
				2-4 rue du Li\`evre \\
				1211 Gen\`eve 4, Switzerland.\\
				\url{hugo.duminil@unige.ch} \\[2em]
				
				$^3$School of Mathematics\\
				Tata Institute of Fundamental Research\\
				1, Homi Bhabha Road\\
				Colaba, Mumbai 400005, India. \\ \url{goswami@math.tifr.res.in} \columnbreak
				
				\hfill$^4$Imperial College London\\
				\hfill Department of Mathematics\\
				\hfill London SW7 2AZ \\
				\hfill United Kingdom.\\
				\hfill \url{p.rodriguez@imperial.ac.uk}  \\[2em]

				\hfill$^5$ETH Zurich\\
				\hfill Department of Mathematics\\
				\hfill R\"amistrasse 101\\
				\hfill 8092 Zurich, Switzerland.\\
				\hfill \url{franco.severo@math.ethz.ch}\\ [2em]
				
				\hfill $^6$Instituto de Matem\'atica Pura e Aplicada\\
				\hfill  Estrada dona Castorina, 110\\
				\hfill  22460-320, Rio de Janeiro - RJ, Brazil.\\
				\hfill  \url{augusto@impa.br}

			\end{multicols}
		}
	\end{flushleft}

\newpage

\setcounter{page}{1}

\section{Introduction}
\label{Sec:intro}
The study of the super-critical phase of percolation models, i.e.~the regime of parameters in which an infinite cluster exists, typically exhibits a subset, possibly strict, in which the model is \textit{strongly percolative}. We will soon give a precise meaning to this -- intuitively, strong percolation describes a robust percolative phase with good quantitative control on large local clusters. We consider this regime for a benchmark case of interest, the vacant set of random interlacements, one notable difficulty being that the model lacks `ellipticity' (cf.~for instance \eqref{eq:no-FE} below). For this model, a non-trivial strongly percolative regime is so far known to exist on $\Z^d$ for all $d \geq 3$ by already involved perturbative arguments, see \cite{Tei11} for $d \geq 5$ and \cite{MR3269990} for all $d \geq 3$, and our understanding of various features of vacant clusters in this regime has witnessed considerable progress over the last decade \cite{https://doi.org/10.48550/arxiv.2105.12110, zbMATH07483480, zbMATH07227743, zbMATH07226362, zbMATH06257634, MR3602841, zbMATH07114721, MR3650417, MR3568036, MR3390739}. 

As much as being strongly percolative is an insightful notion, absence of strong percolation yields very limited information. In the present work we address this imbalance by exhibiting an a-priori much weaker, but as will turn out equivalent, property involving only monotone information in the form of a suitable disconnection upper bound, uniform over scales. This is by no means obvious, one striking reason being that any reasonable notion of strong percolation, which comprises both  `existence-' and `uniqueness'-type characteristics (cf.~\eqref{e:strong-perco} below), is usually far from being a monotone property. 

The characterization of strong percolation we obtain is of independent interest. 
In a sense, it defines a `symmetric' analogue to the critical parameter $u_{**}$ introduced and extensively studied in \cite{MR2680403,MR2891880,MR2744881,PopTeix}, which exhibits a corresponding phase in the sub-critical regime, in which connectivity functions are well-behaved (i.e.~exhibit rapid decay) as soon as a suitable uniform connection upper bound holds.
The resulting more balanced view towards criticality is in line with the heuristic picture by which the system ought to be oblivious to the side from which the critical point is approached. As one important application, the conjectured sharpness of the phase transition for $\mathcal{V}^u$ follows by combining the characterization we obtain in the present work with the results of the companion article \cite{RI-I}. Our arguments imply that a regime of parameters in which  connection and disconnection  both occur with sizeable probability over all scales cannot be an extended interval.

\subsection{Main result}\label{subsec:MAIN} Let $\mathcal{V}^u $ denote the vacant set of random interlacements at level $u>0$ on $\Z^d$, $d \geq 3$, introduced in \cite{MR2680403}; see  Section~\ref{sec:prelims} for details. The random set $\mathcal{V}^u$ is decreasing in $u$. It undergoes a percolation phase transition across a threshold $u_*=u_*(d) \in (0,\infty)$, as follows: for all $u> u_*$, the connected components (clusters) of $\mathcal{V}^u$ are finite almost surely, whereas for $u<u_*$, there exists a unique infinite cluster with probability one; see \cite{MR2680403,MR2512613,10.1214/ECP.v20-3734,MR2498684}. With $B_r=([-r,r] \cap \Z )^d$ and for $u,v>0$, consider the events 
\begin{equation}
\label{e:strong-perco}
\begin{split}
\text{Exist}(r,u)&=\left\{\begin{array}{c}  \text{$\mathcal V^u \cap B_r$ contains a cluster with }\\ \text{($\ell^\infty$-)diameter at least  $\frac r5$}\end{array}\right\}, \\[0.3cm]
\text{Unique}(r,u,v)&= \left\{\begin{array}{c}\text{any two clusters in $ \mathcal V^u \cap B_{r}$ having diameter at}\\ \text{least $\frac r{10}$ are connected to each other in $ \mathcal V^v \cap B_{2r}$ } \end{array}\right\}.
\end{split}
\end{equation}
Events as in \eqref{e:strong-perco} have appeared in the percolation literature, see for instance \cite{AntPis96}, and also \cite{Tei11,MR3390739} in the context of $\mathcal{V}^u$. When present with high enough probability, these events lend themselves to powerful renormalization arguments, as witnessed in the above (long) list of references, which all crucially exploit this feature. 

We now introduce a (simpler) disconnection event. For $U,V \subset \Z^d$, we denote by $\{\displaystyle{\lr{}{ \mathcal{V}^u}{U}{V}}\}$  the connection event that a cluster of $\mathcal{V}^u$ intersects both $U$ and $V$ and replace $\longleftrightarrow$ by $\nlr{}{}{}{}$ to denote its complement, the corresponding disconnection event, by which we mean that no cluster of $\mathcal{V}^u$ intersects both $U$ and $V$ simultaneously. For a parameter $\gamma_M > 1$, we define the length scale
\begin{equation}
	\label{eq:def_M}
	M(r)=\exp\big\{(\log r)^{\gamma_M}\big\}, 
\end{equation}
which grows super-polynomially in $r$ and will play a central role in this article. In the sequel, $c,c',C$ etc.~refer to generic positive constants (i.e.~in $(0,\infty))$ that can change from place to place. Numbered constants are fixed upon first appearance within the text. All constants may implicitly depend on the dimension $d$. Their dependence on any other quantity will be made explicit. Following is our main result.

\begin{theorem} \label{T:MAIN}
For all $ d \geq 3$, there exists $\alpha=\alpha(d) \in (0,1)$ such that the following holds.\\[-0.5em] 

\noindent For all $u>0$ and $\gamma_M \geq \Cl{c:gamma_M}$, the following are equivalent:
\begin{itemize}
\item[i)]  for all $v \in (0,u)$, with $M=M(r)$ as in \eqref{eq:def_M}, 
\begin{equation}
\liminf_{r} \,  ( M / r)^{d} \,\P[\nlr{}{ \mathcal{V}^v}{B_{r}}{\partial B_{M}}] \leq \alpha; \label{eq:disc-cond}
\end{equation}
\item[ii)] for all $v,v'$ with $0 < v < v' < u$ and $C=C(v,v', \gamma_M) \in (0,\infty)$,
\begin{equation}
 \label{eq:barh1}
\P[\textnormal{Exist}(r,v')]\ge 1-C{ e}^{-r^{\Cl[c]{c:decay}}}\text{ and  }\P\left[\textnormal{Unique}(r,v',v)\right]\geq 1- C{ e}^{-r^{\Cr{c:decay}}}, \text{ for $r\ge1$}.
\end{equation}
\end{itemize}
\end{theorem}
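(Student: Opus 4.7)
For the easy direction (ii) $\Rightarrow$ (i), fix $v < v' < u$ and iterate Exist and Unique across dyadic scales $r_k = 2^k r$, $k=0,\dots,K$, with $r_K \asymp M(r)$. At each scale, Exist at $v'$ produces a cluster in (a translate of) $B_{r_k} \cap \mathcal V^{v'}$ of diameter $\ge r_k/5$, and Unique at $(v',v)$ merges the large clusters at consecutive scales into a single connected piece of $\mathcal V^v$ joining $B_r$ to $\partial B_M$. Since $K = O((\log r)^{\gamma_M})$ grows only poly-logarithmically in $r$, a union bound preserves the stretched-exponential rate in \eqref{eq:barh1}, so $\P[\nlr{}{\mathcal V^v}{B_r}{\partial B_M}] \le C e^{-r^{c'}}$. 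This beats any polynomial in $M/r$, so the $\liminf$ in \eqref{eq:disc-cond} vanishes and is \emph{a fortiori} $\le \alpha$.

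For the difficult direction (i) $\Rightarrow$ (ii), the first task is to upgrade the $\liminf$ hypothesis into a uniform-in-scale quantitative disconnection bound. Along the subsequence $r_n$ realising the $\liminf$ one has $\P[\nlr{}{\mathcal V^v}{B_{r_n}}{\partial B_{M(r_n)}}] \le 2\alpha (r_n/M(r_n))^d$, a super-polynomially small estimate. Between $r_n$ and $M(r_n)$ I would combine elementary covering arguments with the decoupling inequalities available for random interlacements to propagate the bound to all scales, with polynomial losses absorbed by taking $\alpha$ small. Since this holds for each $v \in (0,u)$, the same control is available at parameter $v'$. Deriving Exist at $v'$ is then direct: the disconnection event $\{\nlr{}{\mathcal V^{v'}}{B_{r/2}}{\partial B_r}\}$ is dominated by $\{\nlr{}{\mathcal V^{v'}}{B_{r/2}}{\partial B_{M(r/2)}}\}$ (since any crossing from $B_{r/2}$ to $\partial B_{M(r/2)}$ must traverse $\partial B_r$), while its complement produces a cluster in $\mathcal V^{v'} \cap B_r$ of $\ell^\infty$-diameter at least $r/2 \ge r/5$.

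The main obstacle is deriving Unique, i.e.\ merging two clusters $C_1, C_2 \subset \mathcal V^{v'} \cap B_r$ of diameter $\ge r/10$ inside $\mathcal V^v \cap B_{2r}$ by exploiting the sprinkling $v' - v > 0$. The essential difficulty, flagged in the abstract, is the absence of finite energy for $\mathcal V^u$: conditioning on the interlacement trajectories outside a local window may force the window to be deterministically occupied, so one cannot simply ``open up'' a chosen gluing site as in Bernoulli percolation. My plan is to (a) perform a conditional exploration of the trajectories responsible for the current $\mathcal V^{v'}$ only through the boundary of a mesoscopic region placed between $C_1$ and $C_2$, preserving genuine randomness for the additional trajectories contributing to $\mathcal V^v \setminus \mathcal V^{v'}$ in the interior; (b) inside this region apply Exist at $v'$ (already secured) and sprinkle to produce a ``bridge'' cluster in $\mathcal V^v$ which, by the already established connection estimates, reaches both $C_1$ and $C_2$; and (c) account carefully for the trajectories entering and exiting the relevant boxes to control the net loss in probability. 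Once Exist and Unique hold at polynomial-in-$r$ rate, a standard renormalization upgrades both to the stretched-exponential decay claimed in \eqref{eq:barh1}. Steps (a)--(c) are the technical heart of the proof and where I expect the main effort to concentrate.
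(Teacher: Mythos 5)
Your easy direction (ii) $\Rightarrow$ (i) matches the paper's argument precisely, and your high-level framing of (i) $\Rightarrow$ (ii) (derive a seed estimate for Exist and Unique, then renormalize) is also the right shape. There are, however, two concrete gaps.

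First, the intermediate step where you ``propagate the bound to all scales'' between $r_n$ and $M(r_n)$ is both unnecessary and problematic: disconnection is not monotone in the scale, and covering plus decoupling will not cheaply carry the estimate from the $\liminf$-realising subsequence to arbitrary intermediate radii. The paper does not do this; it works directly \emph{along the subsequence}, proving in Proposition~\ref{prop:uniq} only a $\limsup$-type seed estimate, and the renormalization in \eqref{eq:RG4}--\eqref{eq:RG5} is then seeded at a single large $L_0=M(N_0)$ chosen along that subsequence. Once the bootstrap is triggered at one scale, the stretched-exponential bound forward-propagates irrespective of what happens at other $N$.

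Second, and more seriously, steps (a)--(c) of your Unique argument do not survive contact with the non-finite-energy obstruction. Step (b) is circular: ``apply Exist at $v'$ inside the region and sprinkle'' gives you a large cluster in the mesoscopic box, but it does not connect that cluster to the \emph{prescribed} clusters $\mathcal{C}_1,\mathcal{C}_2$ on its boundary --- merging into a prescribed target is exactly what uniqueness is, and no ``already established connection estimate'' furnishes it. Step (a) underestimates how badly the conditional law degenerates: for any finite $K \ni 0$, $\P[0\notin\mathcal V^u\mid \partial K\subset\mathcal V^u]=0$ (cf.~\eqref{eq:no-FE}), so conditioning on the trajectory data through a single ``mesoscopic region boundary'' can, and generically does, deterministically freeze the interior at both levels $v'$ and $v$; there is no a priori residual randomness from ``the trajectories contributing to $\mathcal V^v\setminus\mathcal V^{v'}$'' because those are not independent of $\mathcal I^{v'}$. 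The paper's workaround is structurally different: a Benjamini--Tassion-type reduction to two ambient clusters, followed by a deterministic hierarchical bridge (Section~\ref{sec:bridging}) with polynomial safety gaps at every scale, so that the conditional decoupling of Proposition~\ref{prop:cond_decoup} (which genuinely needs $r^c$-sized buffers) leaves residual randomness at each intermediate box, and finally a purpose-built \emph{sprinkled finite-energy property} (Proposition~\ref{lem:finite_energy}) to vacate the bottom-scale holes at a not-too-degenerate conditional cost. These three devices, together with the counting Lemmas~\ref{L:bound_bad} and \ref{cor:ample_connection}, are what actually replace the naive conditional exploration you propose.
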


The fact that $ii)$ implies $i)$ is a straightforward matter. The gist of Theorem~\ref{T:MAIN} is thus the implication $i) \Longrightarrow ii)$. Before discussing the difficulties with this in due detail (see \S\ref{subsec-pf-outline}) let us relate Theorem~\ref{T:MAIN} to existing results. Employing the language from the beginning of this introduction, we say that $\mathcal V =(\mathcal V^u)_{u>0}$ \textit{strongly percolates} at levels $v',v$ if \eqref{eq:barh1} holds for some constant $C=C(v,v') \in (0,\infty)$, and define 
\begin{equation}
\label{eq:baru3}
\bar{u}= \bar{u}(d) = \sup\big\{s >0:~ \mathcal V \textrm{ strongly percolates at level } v',v ~\textrm{for all } 0< v< v' < s \big\}.
\end{equation}
By \cite[Theorem 1.1]{MR3269990}, see also \cite{Tei11} for $d \ge 5$, one knows that $\bar{u} $ is non-trivial, i.e.~$\bar{u} >0$ for all $d \ge 3$. The (critical) value $\bar u (\leq u_*)$ pins down a subset of the percolative phase that is very robust, meaning that one has strong quantitative control on large local clusters (in the sense of \eqref{e:strong-perco} and \eqref{eq:barh1}). In analogy with \eqref{eq:baru3}, one naturally introduces, with $\alpha$ as supplied by Theorem~\ref{T:MAIN}, the parameter
\begin{equation}
	\label{eq:tildeu}
	\tilde{u}= \tilde{u}(d)= \sup \{ v > 0 : \eqref{eq:disc-cond} \text{ holds} \}.
\end{equation}
The threshold $\tilde{u}$ is of somewhat similar flavor as the definition of the critical parameter for Bernoulli percolation in \cite{DumTas15}, which can be viewed as refining the analogue of \eqref{eq:tildeu} (incorporating in particular a key `exploratory' feature).
With \eqref{eq:baru3} and \eqref{eq:tildeu}, the statement of Theorem~\ref{T:MAIN} now has the following immediate and succinct consequence.
\begin{corollary} For all $d \geq { 3}$ and all $\gamma_M \geq \Cr{c:gamma_M}$,  
\begin{equation}
\label{C:supercritical}
\text{$\bar{u}(d) =  \tilde{u}(d)$.}
\end{equation}
\end{corollary}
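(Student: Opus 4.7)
The plan is to derive \eqref{C:supercritical} as a direct bookkeeping consequence of Theorem~\ref{T:MAIN}, by matching the conditions defining the two suprema in \eqref{eq:baru3} and \eqref{eq:tildeu} against the equivalent statements $i)$ and $ii)$ of the theorem. The essential observation is that both families of admissible parameters are down-closed in the sup-parameter, so that the two suprema are characterized by the same level set up to the equivalence of $i)$ and $ii)$.

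For $\bar u \le \tilde u$, I would fix $u \in (0, \bar u)$ and pick $u' \in (u, \bar u)$ admissible in \eqref{eq:baru3}. Since $(0, u) \subset (0, u')$, \eqref{eq:barh1} holds for every pair $0 < v < v' < u$, which is precisely condition $ii)$ of Theorem~\ref{T:MAIN} at level $u$. The (easy) implication $ii) \Rightarrow i)$ then yields \eqref{eq:disc-cond} at level $u$, placing $u$ in the set defining $\tilde u$. Letting $u \uparrow \bar u$ gives the inequality.

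For the reverse $\tilde u \le \bar u$, I fix $u \in (0, \tilde u)$ and pick $w > u$ at which \eqref{eq:disc-cond} holds; the $\liminf$ bound then holds for every $v \in (0, w)$, and restricting the quantifier to $v \in (0, u) \subset (0, w)$ shows that \eqref{eq:disc-cond} is also satisfied at level $u$. The substantive direction $i) \Rightarrow ii)$ of Theorem~\ref{T:MAIN} — in which the whole work of this paper is invested, through the gluing construction that bypasses the lack of finite energy for $\mathcal V^u$ — then delivers strong percolation at every pair $0 < v < v' < u$, so $u$ is admissible in \eqref{eq:baru3} and hence $u \le \bar u$. Letting $u \uparrow \tilde u$ concludes.

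The argument is routine symbol-pushing once the equivalence of $i)$ and $ii)$ is granted; the only mild care required is in reconciling the two roles of the letter $u$ in Theorem~\ref{T:MAIN} and in the supremum definitions, and noting that both admissibility sets are down-closed (immediate from the ``for all $0 < v < v' < s$'' and ``for all $v \in (0, u)$'' structure of the respective inner conditions). There is no genuine obstacle at the level of the corollary — the entire mathematical content is carried by Theorem~\ref{T:MAIN}, with the hard half being $i) \Rightarrow ii)$.
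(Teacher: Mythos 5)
Your proposal is correct and matches the paper's own treatment, which presents the corollary as an immediate consequence of Theorem~\ref{T:MAIN} without further argument; you have simply fleshed out the elementary bookkeeping that the paper leaves implicit.

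One small imprecision worth flagging: your parenthetical remark that the down-closedness of both admissibility sets is ``immediate from the quantifier structure'' is accurate only for $\bar u$. The set $\{v>0 : \eqref{eq:disc-cond}\text{ holds}\}$ defining $\tilde u$ carries no ``for all'' quantifier in $v$; its down-closedness (which you do correctly \emph{use} in the step ``the $\liminf$ bound then holds for every $v\in(0,w)$'') follows instead from the monotonicity of the disconnection probability in the level --- since $\mathcal V^v$ is decreasing in $v$, the disconnection event is increasing in $v$ and hence so is the $\liminf$ in \eqref{eq:disc-cond}. Likewise, in the first direction, condition~$i)$ at level $u$ gives $(0,u)$ contained in that set rather than $u$ itself being an element of it, but this still yields $\tilde u \geq u$ and the conclusion is unaffected.
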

For completeness, let us mention that various reinforcements of being `strongly percolative' have been in circulation. The notion we deal with here has been fruitfully exploited to give strong answers to various problems relating to disconnection and the formation of droplet(s) in the supercritical regime \cite{https://doi.org/10.48550/arxiv.2105.12110, zbMATH07483480, zbMATH07227743, zbMATH07226362, zbMATH06257634, MR3602841, zbMATH07114721}. For other questions, see e.g.~\cite{MR3650417, MR3568036, MR3390739}, see also \cite{GriMar90, DCGR20, gosrodsev2021radius, https://doi.org/10.48550/arxiv.2107.06326} in various other contexts, it is of interest to remove the sprinkling, i.e.~to require $v'=v$ in \eqref{eq:baru3}, see e.g.~\cite[(1.3)]{MR3269990}. This is very close in spirit to the notion of ``well-behavedness'' of the supercritical phase which has appeared in the literature, see \cite{GriMar90,zbMATH02164755} and references therein. It is plausible, but presently open, that the sprinkling inherent to $\bar u$ in \eqref{eq:baru3} can be removed. We hope to return to this elsewhere~\cite{GRS23+}.

For certain applications, one may even wish to require a small `unfavorable' sprinkling, i.e.~to demand that \eqref{eq:baru3}  hold for all $u< s$ and some $v \in (u,s)$, see~\cite[(2.16)]{MR2838338}. Some regularity of the constant $C$ appearing in \eqref{eq:barh1} in its arguments $v,v'$ has also been propitiously used, see \cite[(2)-(3)]{zbMATH07395560}. In a related fashion, as follows upon inspection of our proof, one can in fact choose the constant $\Cr{c:decay}$ appearing in \eqref{eq:barh1} uniformly in $d$ (for instance $\Cr{c:decay}=\frac14$ works for the conclusions of Theorem~\ref{T:MAIN} to hold). One may then naturally wonder what the optimal decay for the events in \eqref{e:strong-perco} as well as whether $\bar u$ can be substituted for any of these stronger notions; cf.~\cite{GRS23+}.

\subsection{Proof outline}\label{subsec-pf-outline}  
We now discuss the proof of Theorem~\ref{T:MAIN}, and highlight some of the key issues in proving the implication $i) \Longrightarrow ii)$.
For $v$ such that \eqref{eq:disc-cond} holds, one has abundance of large clusters inside $B_{M}$, in that every translate of $B_r$ inside $B_M$ (with $M=M(r)$) is connected to $\partial B_M$ with high probability. The key is to argue that a certain gluing property holds, by which these large clusters all communicate after sprinkling, with probability tending to one as $r \to \infty$. From this, \eqref{eq:barh1} is then deduced via renormalisation.

For the purposes of this introduction, let us assume for simplicity that we only have two disjoint clusters in $\mathcal{V}^v$ crossing the annulus $A_M= B_M \setminus B_{M/2}$ whose $r$-neighborhoods cover all of $A_M$. This simplified setup is good enough to illustrate the main steps of the argument, as well as the difficulties encountered along the way. The reduction to this case from the general one, which includes many ambient (i.e.~large and $r$-dense) clusters, is inspired by an argument of Benjamini-Tassion \cite{MR3634283} in the context of Bernoulli percolation in a perturbative regime.
Within the simplified setup with two clusters only, one can exhibit (cf.~Lemma~\ref{L:goodpath_supercrit}) many disjoint contact zones between the two clusters, i.e.~boxes $\Lambda_k$ of side length $2r$ each intersecting a large chunk (macroscopic at scale $r$) of the two clusters. The key point is to exhibit a not too degenerate lower bound on the probability that the two clusters be connected locally inside $\Lambda_k$ after sprinkling to $\mathcal{V}^{u-\delta}$ for $\delta > 0$, and to exhibit this cost \textit{multiplicatively} in $k$, i.e.~generate some decoupling.
This will be achieved via a delicate bridging technique, which implements a surgery argument to construct a path at an affordable cost. 

To put things into perspective, let us start by recalling a line of argument from \cite{DCGRS20}, where a similar problem was faced in the context of the Gaussian free field. As it turns out, this is a much simpler problem, and the whole surgery argument developed in \cite{DCGRS20}, which was already intricate, fails to work here, but it helps to highlight the main issues. Roughly speaking, in \cite{DCGRS20}, one could afford to ask \textit{a-priori} for the boxes $\Lambda_k$ to have a certain `renormalized goodness' property, and to then condition on this goodness along with the two clusters before performing the surgery, which employed a technical device called bridging lemma; see \cite[Lemma 3.6]{DCGRS20}. In a nutshell, the goodness ensured the presence of a so-called good bridge, which facilitated the (quenched!) construction of a path in a cost-efficient way. Importantly, all this conditional information (i.e.~goodness+clusters) still \textit{left} randomness in spite of long-range correlations, a highly non-trivial feature, which is owed to a certain amount of `ellipticity' inherent to the free field. This left-over randomness was carefully exhibited using a decomposition of the field over scales, and thankfully `enough' randomness remained to perform the surgery. We will not further detail the specifics of this here; it will anyways be useless for us.

This approach is completely doomed for $\mathcal{V}^u$ because such heavy conditioning may in fact completely freeze the configuration, i.e.~remove all randomness. 
This is due to the fact that $\mathcal{V}^u$ exhibits strong degeneracies: for instance, for any finite set $K \ni 0$ (e.g.~$K= B_L$ for arbitrary large $L \geq 1$), one has that
\begin{equation}
\label{eq:no-FE}
\P[0 \notin \mathcal{V}^u \, | \, x \in \mathcal{V}^u \text{ for all } x \in \partial K]=0,
\end{equation}
which is an indication of `non-ellipticity'. In particular, \eqref{eq:no-FE} violates the commonly assumed finite-energy property, see e.g.~\cite[Def.~12.1]{zbMATH05728593} or \cite[Def.~3.2]{zbMATH01496108}, where this property is called insertion (and deletion) tolerance (which the free field satisfies). 
This feature poses severe restrictions on any attempt to condition on part of the configuration. Moreover, as explained in \cite{RI-I, RI-III}, the set $\mathcal{V}^u$ does not admit a natural intrinsic decomposition over scales. The best available conditional decoupling results \cite{PopTeix, CaioSerguei2018}, see also Proposition~\ref{prop:cond_decoup} below, essentially require a buffer zone of size at least $r^c$ when revealing the configuration in $B_r$, and even then leave very little control on the underlying conditional laws compared to the Gaussian free field for example, where one can exploit very explicit decomposition and monotonicity properties of conditional distributions. Still in the context of (smooth) Gaussian fields, we further refer to \cite{AHL_2022__5__987_0,severo2022uniqueness}, which implement a certain shifting technique to deal with degeneracy issues owing to analyticity, in the presence of short-range correlations.

\bigskip
\begin{figure}[h!]
  \centering 
  \includegraphics[scale=0.99]{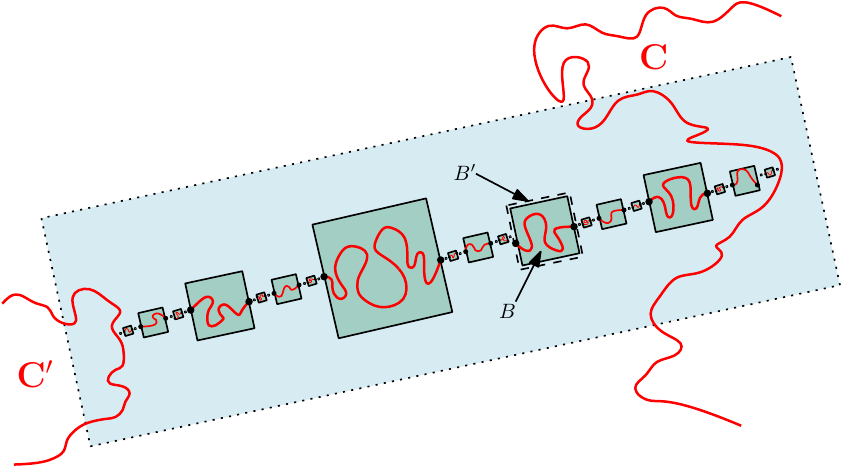}
  \caption{\textbf{Gluing} two vacant clusters $\mathbf{C}$ and $\mathbf{C}'$ using a fractal bridge (green). Connections in $\mathcal{V}^u$ (red) are re-constructed within each box $B$ of the bridge at preferential (polynomial) cost. The bottom scale is treated differently (not depicted). For each box $B=B(x,s)$ in the picture, the concentric larger box $B'=B(x,s+s^{\xi})$ for well-chosen (i.e.~not too small) $\xi \in (0,1)$ does not intersect the other green boxes (safety gaps). This feature is owed to very limited information on the conditional behavior of the occupation field of $\mathcal{V}^u$.}
  \label{F:bridge}
\end{figure}

The features outlined above warrant a completely novel approach to performing surgery, and the bridging technique we 
devise here, which is invented from scratch, needs to be mindful of strong effects such as \eqref{eq:no-FE}. The features outlined in the previous paragraph, especially the restrictive conditional decoupling property, preclude the possibility to ask for any kind of  `goodness property' and to condition on a good configuration a-priori. Rather, our approach is \textit{dynamic} in that we ask for good features (which may or may not occur) along the way, as we explore the region in which the connection is to be constructed. We explain this in more detail in the next paragraph. Interestingly, the overall failure rate of the procedure is ultimately measured in terms a large deviation event for a certain count of bad boxes; this is loosely reminiscent of an exploration argument by Aizenman-Kesten-Newman \cite{AizKesNew87}, see also \cite{GanGriRus88,cerf2015}, used in the course of proving uniqueness of the infinite cluster for Bernoulli percolation on $\Z^d$, which is key in addressing a related question: namely, that of bounding the probability of so-called `two-arm' events involving the presence of two disjoint large nearby clusters.

The `exploration' we perform starts with boxes at large scales, far away from the two clusters, and the `arrow of time' points towards smaller scales, i.e.~progressively refining the resolution. When a given box turns out to be `good,' we can reconstruct a piece of path at an affordable price. The surgery proceeds in this way and 
(re-)constructs `almost'-connections between the two clusters (whose geometry can be wild) in a hierarchical 
fashion, leaving polynomial gaps at each scale to generate decoupling; see Figure~\ref{F:bridge}. Each scale thereby contributes to the `finite-energy' cost of gluing, but a separate tool is needed when reaching the bottom scale. It involves a little device exhibiting a weak finite energy property, which essentially allows to vacate a box at a sprinkled level with \emph{not too low} conditional(!)~probability on a local event having high probability. When the entire `exploration' is successful, which happens overall with not too degenerate probability, the resulting path connecting the two clusters is a fractal curve involving all scales at once; see Figure~\ref{F:bridge}.

\bigskip
The methods we develop here are robust, and as such, provide a template that paves the way towards a better understanding of the super-critical regime in other dependent models of interest, including `non-elliptic' ones, e.g.~violating finite-energy property; matters relating to the `well-behavedness' of the supercritical phase have so far witnessed comparatively little progress, and results are restricted to specific models \cite{GriMar90,zbMATH02164755,DCGR20, DCGRS20, gosrodsev2021radius,  AHL_2022__5__987_0,  https://doi.org/10.48550/arxiv.2107.06326} and references~therein. To wit, the gluing technique we develop here yields a more robust proof of \cite[Proposition 4.1]{DCGRS20}, which avoids the use of a very specific decomposition of the field, and relies overall on a much less precise understanding of the conditional behavior of the occupation field.
Let us also mention that exactly the same gluing technique is employed in the companion article \cite{RI-I} in a more elaborate context, involving certain (inhomogenous, finite-range) models $\mathcal{V}_{k,L}^u$  approximating $\mathcal{V}^u$ (see \cite[Section 4]{RI-I}). Whereas ellipticity is not an issue for these models, conditional decoupling in a form as needed to perform the `exploration' does not come for free, and is achieved through additional coupling arguments, see \cite[Section 7]{RI-I} involving intermediate models in which `time runs for free' inside regions of interest, thus facilitating a comparison with $\mathcal{V}^u$.

\subsection{Organization} Section~\ref{sec:prelims} sets up the notation and gathers a few preliminary results, starting with a useful topological ingredient. It then collects two important inputs about random interlacements, a connectivity estimate and a conditional decoupling property. Sections~\ref{sec:sfe} and~\ref{sec:bridging} each contain a self-contained ingredient for the proof. Section~\ref{sec:sfe} exhibits a sprinkled finite energy property, which is interesting in its own right. Sections~\ref{sec:prelims} and \ref{sec:sfe} contain all model-specific inputs. Section~\ref{sec:bridging} comprises the deterministic bridge construction underlying our later surgery argument.

The proof of Theorem~\ref{T:MAIN} starts in Section~\ref{sec:supercritical}. This short section reduces the result to the key `gluing property' mentioned above, see Lemma~\ref{lem:uniqreduct} (cf.~also Proposition~\ref{P:glue-boost} for an enhancement). This reduction step follows closely the setup of \cite{DCGRS20} (itself adapted from \cite{MR3634283}), to which it frequently refers. All external inputs from \cite{DCGRS20} are isolated in that section.

Finally, Sections \ref{Sec:surgery} and \ref{Sec:2lemmas} are devoted to the proof of the (one-step) gluing lemma (Lemma~\ref{lem:uniqreduct}). They contain the delicate surgery argument delineated above, which brings into play the various ingredients from Sections~\ref{sec:prelims}-\ref{sec:bridging}, and represent the core of this article. The proof of
Lemma~\ref{lem:uniqreduct} is given in full in Section~\ref{Sec:surgery}, but the argument is discharged from two intermediate results, Lemmas~\ref{L:bound_bad} and~\ref{cor:ample_connection}, which control certain (key) counts of good and bad boxes used in the proof. These two lemmas are proved separately in Section~\ref{Sec:2lemmas} for the sake of readability.

\section{Preliminaries}

\label{sec:prelims}

In this section, we gather a few preliminary results. In \S\ref{subsec:crossing_interface}, we collect a topological result, see Lemma~\ref{L:goodpath_supercrit}, which will be used in Section~\ref{sec:supercritical} to exhibit the boxes $\Lambda_k$ in which connections are attempted. Its proof uses a result from Deuschel-Pisztora \cite{MR1384041}. In \S\ref{subsec:RI}, we first introduce a small amount of notation concerning random walks and random interlacements. We then gather two ingredients. The first is a useful connectivity estimate for $\mathcal{V}^u$, see Lemma~\ref{lem:twopointsbound}, which will be employed when reconstructing a path. The second, which is the content of Proposition~\ref{prop:cond_decoup}, is an adaptation of a conditional decoupling result of Alves-Popov \cite{CaioSerguei2018}, tailored to our needs.

\subsection{Connectivity of interfaces}\label{subsec:crossing_interface}
 We consider the lattice $\Z^d$, $d \geq3$, endowed with the usual nearest-neighbor graph structure and denote by $|\cdot|_2$ and $|\cdot|$ the  $\ell^{2}$- and $\ell^{\infty}$-norms on $\Z^d$. We write $x\sim y$ to denote neighbors, i.e.~when $|x-y|_2=1$ for $x,y \in \Z^d$. For $U \subset \Z^d$, the set $U^c =\Z^d 
\setminus U$ denotes its complement (in $\Z^d$), the set $\partial U = \{x \in U: \exists y\notin U \text{ s.t. } y \sim x\}$
is the interior vertex boundary of $U$ and $\partial_{\text{out}} U = \partial (U^c)$ its outer vertex boundary. We let $\overline{U}=U \cup \partial_{\text{out}} U$, and $U \subset \subset \Z^d$ means that $U\subset \Z^d$ has finite cardinality. We use the notations $B_r(x)=B(x,r)$ interchangeably to denote balls with radius $r> 0$ around $x \in \Z^d$ with respect to the $\ell^{\infty}$-norm and abbreviate $B_r=B_r(0)$. 
We write $d(\cdot, \cdot)$ to refer to the $\ell^{\infty}$-distance between subsets of $\Z^d$ and abbreviate $d(x,U)= d(\{x\},U)$ for $x \in \Z^d$ and $U \subset \Z^d$. 
A $*$-path is a finite or infinite sequence $(x_k)\subset \Z^d$ such that $|x_{k+1}-x_k| = 
1$. A path is defined similarly with $|\cdot|_2$ replacing $|\cdot|$. A crossing from $U$ to $V$ is a path whose range intersects both $U,V \subset \Z^d$.
 \begin{lem} \label{L:goodpath_supercrit} $(m > n \geq 1).$
	Let $U, V \subset \Z^d$ be such that $U \cup V = B_m \setminus B_{n-1}$ and both $U$ and $V$ contain a crossing from 
	$\partial B_n$ to $ \partial B_m$. Then there exists a $*$-path 
	$\pi \equiv (\pi(i))_{1 \le i \le |\pi|}$ intersecting both $\partial B_n$ and $ \partial B_m$ with
	\begin{equation}
		\label{eq:cond_path}
		d(\pi(i), U) \vee d(\pi(i), V)  \le 1 \text{\,\,for all $1 \le i \le |\pi|$.}
	\end{equation}
\end{lem}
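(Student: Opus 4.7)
The plan is to produce $\pi$ inside the \emph{interface set}
\[
I = \{x \in B_m \setminus B_{n-1} : d(x,U) \leq 1 \text{ and } d(x,V) \leq 1\},
\]
so that \eqref{eq:cond_path} holds automatically by definition. Finding a $*$-path in $I$ joining $\partial B_n$ to $\partial B_m$ will rely on a duality result of Deuschel--Pisztora~\cite{MR1384041} type, between $*$-connectivity of a set and nearest-neighbor connectivity of its complement in an annulus.

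The key structural observation is that the complement of $I$ in the annulus decomposes into two far-apart pieces. Indeed, since $U \cup V = B_m \setminus B_{n-1}$, any $x \in (B_m \setminus B_{n-1}) \setminus I$ must satisfy either ($x \in U \setminus V$ with $d(x,V) \geq 2$) or ($x \in V \setminus U$ with $d(x,U) \geq 2$); call these two pieces $D_U$ and $D_V$. For $x \in D_U$ and $y \in D_V$, the inclusion $y \in V$ gives $|x-y| \geq d(x,V) \geq 2$, so $D_U$ and $D_V$ are not even $*$-adjacent in $\Z^d$.

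I then argue by contradiction. If $I$ contains no $*$-path from $\partial B_n$ to $\partial B_m$, the Deuschel--Pisztora duality produces a nearest-neighbor connected set $\Sigma \subset I^c \cap (B_m \setminus B_{n-1})$ that blocks every nearest-neighbor path in the annulus from $\partial B_n$ to $\partial B_m$. Being nearest-neighbor connected while $D_U$ and $D_V$ fail to be $*$-adjacent, $\Sigma$ must lie entirely in $D_U$ or entirely in $D_V$. The given crossing of $V$ is a nearest-neighbor path in the annulus from $\partial B_n$ to $\partial B_m$, so it must intersect $\Sigma$; since the crossing sits in $V$ while $D_U \subset U \setminus V$, this rules out $\Sigma \subset D_U$. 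The symmetric argument using the crossing of $U$ rules out $\Sigma \subset D_V$, a contradiction.

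The step requiring the most care is the annular version of Deuschel--Pisztora, namely recording explicitly that if no $*$-crossing of $I$ between $\partial B_n$ and $\partial B_m$ exists, then $I^c$ contains a nearest-neighbor connected set blocking every nearest-neighbor path in $B_m \setminus B_{n-1}$ from $\partial B_n$ to $\partial B_m$. This is standard but phrased in \cite{MR1384041} for rectangular boxes, and the adaptation to the annulus has to be spelled out. Once that is in place, the rest of the argument is purely combinatorial and is handled by the three steps outlined above.
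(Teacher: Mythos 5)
Your plan is a genuinely different route from the paper's. The paper is \emph{constructive}: it fixes a crossing component $\mathcal{C}$ of $U$, decomposes a crossing $\pi_V \subset V$ into subpaths lying alternately in $\mathcal C$ (hence in $U\cap V$) and in the complementary components $(\mathcal{C}')_j$ of $B_m\setminus\mathcal{C}$, and replaces the latter subpaths by the $*$-connected relative boundaries $\partial_{B_m}(\mathcal C')_j$, applying~\cite[Lemma~2.1(ii)]{MR1384041} to the \emph{nearest-neighbor}-connected set $\mathcal C$. You instead argue by contradiction via a blocking surface living in $I^c$. Your observation that $I^c\cap(B_m\setminus B_{n-1})$ splits into $D_U$ and $D_V$ at $\ell^\infty$-distance $\ge 2$ is correct, and the concluding contradiction with the given crossings in $U$ and in $V$ is clean. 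At a high level this is arguably more transparent than the paper's patching argument.

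The concern is with the duality lemma itself. What~\cite[Lemma~2.1(ii)]{MR1384041} supplies — and what the paper uses — is that boundaries of the complement components of a \emph{nearest-neighbor}-connected set are $*$-connected. What your contradiction argument needs is the \emph{dual} statement: a finite \emph{$*$-connected} set (namely the $*$-component $J$ of $B_{n-1}$ in $B_{n-1}\cup I$) has a nearest-neighbor-connected exterior boundary facing the unbounded component of $J^c$. In $d\ge 3$ the roles of nearest-neighbor and $*$-adjacency are not interchangeable, so these are two distinct lemmas, and the dual form you need is \emph{not} what is proved in~\cite{MR1384041}; it is a Tim\'ar-type result (cf.\ A.~Tim\'ar, ``Boundary-connectivity via graph theory,'' \emph{Proc.\ AMS} 2013) or a variant found in Grimmett's book. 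So the remark that this ``is standard but phrased in \cite{MR1384041} for rectangular boxes'' misattributes the ingredient: the gap is not merely box-versus-annulus geometry, but which direction of the boundary-connectivity duality is invoked. With the correct reference the argument does close: $J$ does not reach $\partial B_m$, the Tim\'ar lemma gives a nearest-neighbor-connected set $S$ of points $*$-adjacent to $J$ in the unbounded component of $J^c$; one checks $S\subset (B_m\setminus B_{n-1})\cap I^c$, that $S$ meets every nearest-neighbor path from $\partial B_n$ to $\partial B_m$ (using $\partial B_n \subset \partial_{\rm out} B_{n-1}$ and $\partial B_m$ lying in the unbounded component), and then your $D_U/D_V$ dichotomy finishes. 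What each approach buys: the paper's only uses the (older, more routinely cited) Deuschel--Pisztora direction at the cost of a more intricate patching; yours is conceptually cleaner but needs the less-quoted Tim\'ar direction and a careful annular setup.
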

\begin{proof}
Let $\mathcal C$ be any fixed connected component of $U$ intersecting both $\partial B_m$ and $\partial B_n$, which exists by hypothesis, and let $(\mathcal C')_1, \ldots, (\mathcal C')_k$ denote 
the connected components of $B_m \setminus \mathcal C$.

Now let $\pi_V \subset V$ be a (finite, nearest-neighbor) path connecting $\partial B_n$ and $\partial 
B_m$, which also exists by the hypothesis of the lemma. To fix ideas, we assume $\pi_V$ has a 
starting point in $\partial B_n$, terminates when first visiting $\partial B_m$ and does not intersect 
$\partial B_n \cup \partial B_m$ in between. By considering its successive entrance points in $\mathcal 
C$ and exit points from $\mathcal C$, the path $\pi_V$ is decomposed into disjoint, non-empty 
sub-paths $\pi_V^1, \ldots, \pi_V^\ell$ lying alternately in $\mathcal C$ and some $(\mathcal C')_j$ 
with $j \in \{1, \dots,k\}$. For concreteness, we assume henceforth that the starting point $\pi_V(0)$ of 
$\pi_V$ lies in $\mathcal{C}$; the other case is treated in a likewise manner. Hence, $\pi_{V}^{2i} 
\subset (\mathcal C')_{j_{2i}}$ for all $1 \leq i \leq \lfloor \ell/2 \rfloor$ and some $j_{2i} \in \{1,\dots,k\}$. 
	
	In the sequel we write $\partial_{B_m} S= \{ x \in B_m \setminus S : \exists y \in S : y \sim x \}$ to denote the relative boundary of a set $S \subset B_m$ in $B_m$. By \cite[Lemma~2.1 -- (ii)]{MR1384041}, one knows that the $\partial_{B_m} 
	(\mathcal C')_{j}$'s are all $*$-connected and hence the union $\pi'$ of the sets $\pi_V^1, \partial_{B_m} 
	(\mathcal C')_{j_2}, \pi_V^3, \partial_{B_m} (\mathcal C')_{j_4}, \ldots$ is $*$-connected as well. 
	Furthermore, as we now explain,
	\begin{equation}\label{eq:appC-1}\text{$\pi'$ intersects both $\partial B_n$ and $\partial B_m$.}
	\end{equation}
	The case of $\partial B_n$ is clear since $\pi'$ contains $\pi_V^1$ and hence $\pi_V(0)$. The `last' set entering the union forming $\pi'$ is either i) $\pi_V^{\ell}$ or ii) the relative boundary of a component $(\mathcal C')_j$ containing $\pi_V^{\ell}$. But since $\pi_V$ is a crossing, $\pi_V^{\ell}$ intersects $\partial B_m$. This immediately yields \eqref{eq:appC-1} in case i). In case ii) one has that $ (\mathcal C')_j$ intersects $\partial B_m$ (since it contains $\pi_V^{\ell}$, which does). We claim that this implies that $\partial_{B_m} (\mathcal C')_j$ then necessarily intersects $\partial B_m$. For, if $(\mathcal C')_j \cap \partial B_m \ne  \emptyset$ but 
	$\partial_{B_m}(\mathcal C')_j \cap \partial B_m =\emptyset$, then since $\partial B_m$ is a connected subset of $B_m$, we 
	get that $\partial B_m \subset (\mathcal C')_j$. However, this is not possible because $\mathcal 
	C \subset (B_m \setminus  (\mathcal C')_j)$ intersects $\partial B_m$. The same argument applies to $\partial B_n$ as well, which is relevant in case $\pi_V(0) \notin \mathcal{C}$. 
	
	Since $\pi'$ is $*$-connected and on account of \eqref{eq:appC-1}, we can therefore extract a $*$-connected crossing $\pi$ of $B_m \setminus B_{n-1}$ from $\pi'$. We claim that the crossing $\pi$ satisfies \eqref{eq:cond_path}. This is owed to the following two facts:
	\begin{itemize}
		\item[i)] $\pi_V^{2i-1} \subset 
		(\mathcal{C} \cap V) \subset U \cap V$, for all $i$,
		\item[ii)] $\partial_{B_m} (\mathcal C')_j \subset \partial_{{\rm out}} \mathcal C$, for all $j$ 
	\end{itemize} 
	(recall for item ii) that $(\mathcal C')_j $ are the connected components of $B_m \setminus \mathcal{C}$). Now ii) implies that $d(\partial_{B_m} (\mathcal C')_{j},\, U)  \leq d(\partial_{B_m} (\mathcal C')_{j},\, \mathcal C ) = 1$. Together with i), this immediately yields \eqref{eq:cond_path}.
\end{proof}

\subsection{Random walks and random interlacements}
\label{subsec:RI}

We write $P_x$ for the canonical law of the symmetric simple random walk on $\Z^d$ with starting point $x \in \Z^d$ and  $X = (X_n)_{n 
\geq 0}$ the corresponding discrete-time canonical process, whence $X_0=x$ under $P_x$. The measure $P_x$ is defined on the space $W^+$ endowed with its canonical $\sigma$-algebra $\mathcal{W}^+$ generated by the evaluation maps $X_n$, where $W^+$ refers the set of nearest-neighbor transient $\Z^d$-valued trajectories $w=(w_i)_{i\in \mathbb{N}}$ (transience means that $w^{-1}(\{x\})$, $x  \in \mathbb{Z}^d$, has finite cardinality). For $K \subset \Z^d$, we introduce the entrance time $H_K = \inf \{n \geq 0: X_n \in K \}$ in $K$, the exit time $T_K = H_{\Z^d \setminus K}$ from $K$ and the hitting time of $K$, defined as $\widetilde{H}_K = \inf \{ n \geq 1: X_n \in K\}$. 
 We further introduce
\begin{equation}\label{eq:equilib_K} 
e_K(x)=P_x[\widetilde{H}_K=\infty]1\{x \in K\},
\end{equation}
the equilibrium measure of $K \subset\subset \Z^d$, which is supported on $\partial K$. Its total mass
$\text{cap}(K) = \sum_x e_{K}(x)$ is the capacity of $K$.

The interlacement point process is defined on its canonical space $(\Omega, \mathcal{A}, \P)$, under which $\mathbb{P}$ is the probability measure governing a Poisson point process on the space $W^* \times \mathbb{R}_+$ with intensity measure $\nu(\mathrm{d}w^*) \mathrm{d}u$, where $\mathrm{d}u$ denotes the Lebesgue measure on $\R_+$ and $(W^*, \nu)$ is a 
 $\sigma$-finite measure space defined as follows. Let $W$ denote the set of doubly-infinite, nearest-neighbor transient trajectories in $\mathbb{Z}^d$, defined in a similar fashion as $W^+$, endowed with its canonical $\sigma$-algebra $\mathcal{W}$. The corresponding canonical shifts are denoted by $\theta_n: W\to W$, $n \in \Z$, with $\theta_n (w)(\cdot)= w(n+\cdot)$ and the canonical coordinates by $(X_n)_{n\in \Z}$. The shifts $\theta_n$, $n \geq 0$, also act on $W^+$. The space $W^*$ is the set of trajectories in $W$ modulo time shift, i.e.~$W^* = W/\sim$, where $w \sim w'$ if $w = \theta_n (w')$ for some $n\in \mathbb{Z}$. Let $\pi^*: W\to W^*$ denote the corresponding canonical projection. The $\sigma$-algebra $\mathcal{W}$ projects to $\mathcal{W}^*$, the canonical $\sigma$-algebra on $W^*$. We write $W^*_K \subset W^*$ for the trajectories visiting~$K \subset \mathbb{Z}^d$. The space $(W^*, \mathcal{W}^*)$ carries a natural measure $\nu(\mathrm{d}w^*)$, where \begin{equation}\label{e:RI-intensity}
\begin{split}
&1_{W^*_K} \nu_{\infty} = \pi^* \circ Q_K, \text{ for all $K \subset \subset \Z^d$,}
\end{split}
\end{equation}
and $Q_K$ refers to the finite measure on $W$ with
\begin{equation}\label{eq:Q_K}
Q_K[(X_{-n})_{n \geq 0} \in {A}, \, X_0 =x , \, (X_{n})_{n \geq 0} \in {A}' ]= P_x[ {A} \, | \, \widetilde{H}_K =\infty]e_K(x)P_x[ {A}'],
\end{equation}
 for all $x \in \Z^d$ and ${A}, {A}' \in \mathcal{W}^+$, with $e_K$ as in \eqref{eq:equilib_K}. The fact that $\nu$ given by \eqref{e:RI-intensity}-\eqref{eq:Q_K} gives rise to a (unique) well-defined measure follows from \cite[Theorem 1.1]{MR2680403}.
 
Given a sample $\omega  \in \Omega $ under $\P$, one defines the interlacement set
\begin{equation}
\label{e:def-I-u}
\mathcal{I}^u= \mathcal{I}^u(\omega)=\bigcup_{(w^*,v) \in \omega, \, v \leq u} \text{range}(w^*),
\end{equation}
where, with a slight abuse of notation, in writing $(w^*,v) \in \omega$ we tacitly identify the point measure $\omega$ with its support, a collection of points in $W^* \times \mathbb{R}_+$. The corresponding vacant set is given by $\mathcal{V}^u= \Z^d \setminus \mathcal{I}^u$. The set $\mathcal{V}^u$ is thus decreasing in $u$, and the parameter $u$ governs the number of trajectories entering the picture. We denote by $\ell^u= (\ell_x^u)_{x\in \Z^d}$ the field of (discrete) occupation times under $\P$, defined as
\begin{equation}
\label{e:ell-u}
\ell_x^u(\omega) = \sum_{(w,v) \in \omega} \sum_{n \in \Z} 1\{ w(n)=x, \, v \leq u\},
\end{equation}
for $x \in \Z^d$, so that, in view of \eqref{e:def-I-u} and \eqref{e:ell-u}, one has that $\mathcal{I}^u=\{ x\in \Z^d: \ell_x^u > 0\}$. Moreover, in view of \eqref{e:RI-intensity}, \eqref{eq:Q_K} and \eqref{e:def-I-u}, and recalling $\text{cap}(\cdot)$ from below \eqref{eq:equilib_K}, one has that
\begin{equation}
\label{eq:I_u}
\P[\mathcal{V}^u \supset K] = \exp\{ - u \text{cap}(K) \},
\end{equation}
which characterizes the law of $\mathcal{V}^u $.

Next, we derive some a-priori connectivity lower bound for the vacant set $\mathcal{V}^u$ under suitable assumptions on $u$.
\begin{lemma}\label{lem:twopointsbound}
	If $u>0$ and $\kappa>0$ are such that
\begin{equation}
\label{eq:cond-u-twopointsbound}
\inf_r  \P\big[\lr{}{\mathcal V^u}{B_r}{\partial B_{2r}}\big] \geq \kappa \, (\, > 0),
\end{equation}
then for every $r\geq 1$ and $x, y\in B_r$,
	\begin{align}
	&\label{eq:twopointsboundweaker}
	\P\big[\lr{}{\mathcal V^u \cap B_{2r}}{x}{y}\big]\geq c(\kappa)r^{-C},\\
	&\label{eq:twopointsbound}
	\P\big[\lr{}{\mathcal V^u \cap B_{r}}{x}{y}\big]\geq e^{-\Cl{ctwopoints}(\kappa)(\log r)^2}.
\end{align}
\end{lemma}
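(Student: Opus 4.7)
The plan is to prove the polynomial bound \eqref{eq:twopointsboundweaker} first and then derive the polylogarithmic bound \eqref{eq:twopointsbound} by chaining \eqref{eq:twopointsboundweaker} over $O(\log r)$ intermediate points. Both bounds rest on combining the hypothesis \eqref{eq:cond-u-twopointsbound} with the FKG inequality for $\mathcal V^u$ and the weak finite-energy identity $\P[K \subset \mathcal V^u] = e^{-u\,\text{cap}(K)}$ from \eqref{eq:I_u}. Throughout I assume that $r$ exceeds some absolute constant, since otherwise both statements are immediate from $\P[\{x,y\} \subset \mathcal V^u] \geq c > 0$ together with $\text{cap}(\{x,y\}) \leq 2$.

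For \eqref{eq:twopointsboundweaker}, I first produce, with polynomial probability, a vacant cluster through $x$ reaching $\partial B_r(x)$ inside $B_r(x)$. Setting $\rho = c(\log r)^{1/(d-2)}$ and using $\text{cap}(B_\rho) \leq C\rho^{d-2} \leq C'\log r$ yields $\P[B_{4\rho}(x) \subset \mathcal V^u] \geq r^{-C_1}$. The hypothesis applied at scale $2\rho$ supplies a $\mathcal V^u$-crossing from $B_{2\rho}(x)$ to $\partial B_{4\rho}(x)$ with probability at least $\kappa$; since both events are increasing, FKG gives their joint occurrence with probability $\geq \kappa\, r^{-C_1}$, and on this event the fully vacant ball $B_{4\rho}(x)$ absorbs the starting point of the crossing, so $x$ is connected to $\partial B_{4\rho}(x)$ in $\mathcal V^u \cap B_{4\rho}(x)$. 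Iterating this seeding-and-crossing step at geometrically growing scales, inserting a fresh polylogarithmic vacant ball near the current cluster's endpoint at each stage to absorb the next scale's crossing, grows the cluster of $x$ up to scale $r$ at cumulative polynomial cost. A symmetric argument produces the analogous macroscopic cluster through $y$.

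The main obstacle is then to glue these two macroscopic local clusters into a single connection inside $\mathcal V^u \cap B_{2r}$, since uniqueness of the macroscopic vacant cluster is not available under the sole hypothesis. I would handle this by one additional seeding-plus-crossing step at a ``bridging'' site in $B_{2r}$ where the clusters of $x$ and $y$ come within distance $\rho$ of each other; the existence of such a site is extracted from a topological argument in the spirit of Lemma~\ref{L:goodpath_supercrit}, applied to the two macroscopic clusters and the region separating them in $B_{2r}$. A final FKG combination then yields a joint probability that remains polynomial in $r$, giving \eqref{eq:twopointsboundweaker}.

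For \eqref{eq:twopointsbound}, I iterate \eqref{eq:twopointsboundweaker}. Choose $k \sim \log r$ and intermediate points $x = z_0, z_1, \ldots, z_k = y$ along a discrete geodesic in $B_r$ with $|z_{i+1} - z_i| \leq r/k$, arranged so that every small box $B_{2r/k}(z_i)$ lies in $B_r$ (with a minor adjustment of the endpoints $x,y$ to a slightly smaller concentric box, which does not affect the leading order). Applying \eqref{eq:twopointsboundweaker} at scale $r/k$ to each consecutive pair yields
\[
\P\bigl[\lr{}{\mathcal V^u \cap B_{2r/k}(z_i)}{z_i}{z_{i+1}}\bigr] \geq (r/k)^{-C},
\]
and FKG on these increasing events produces a joint probability of at least $(r/k)^{-Ck} \geq e^{-C'(\log r)^2}$ for the chosen $k$, which is the desired bound.
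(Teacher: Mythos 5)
Your derivation of \eqref{eq:twopointsbound} from \eqref{eq:twopointsboundweaker} is sound: chaining $\sim \log r$ anchor points at constant spacing and applying FKG gives the quoted $e^{-C(\log r)^2}$. This is a small variant of the paper's argument, which instead moves out from $x$ along dyadically growing scales $L_k = 2^k$ and multiplies the resulting $L_k^{-C}$ factors; both implementations yield the same polylogarithmic exponent.

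The proof you give for \eqref{eq:twopointsboundweaker}, however, has a genuine gap. Your iteration seeds a "fresh polylogarithmic vacant ball near the current cluster's endpoint at each stage," but the endpoint of the growing cluster is a random, configuration-dependent location; the event that a ball centred there is vacant is not a fixed increasing event, so FKG cannot be applied to it. The same issue arises in your final gluing step, where the "bridging site" where the two macroscopic clusters come within distance $\rho$ of each other is again random. Even setting this aside, the iteration does not achieve the claimed polynomial cost: you take $\log_2(r/\rho) \sim \log r$ steps and each seed costs a factor of $r^{-C_1}$, so the cumulative bound is $r^{-C_1 \log r} = e^{-C_1(\log r)^2}$ — that is the weaker estimate \eqref{eq:twopointsbound}, not the polynomial bound \eqref{eq:twopointsboundweaker} that you need as input for your second part. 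The paper deliberately imports \eqref{eq:twopointsboundweaker} from \cite[Lemma~3.4]{DCGRS20}, whose argument, as noted in the text, uses \emph{only} the FKG inequality together with invariance under lattice symmetries (union bounds over deterministic endpoints, reflections and rotations to align crossings), and never appeals to the capacity identity \eqref{eq:I_u}. Your capacity-based seeding is thus not just a different route but one that, as written, does not close.
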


\begin{proof} The proof of \eqref{eq:twopointsboundweaker} proceeds exactly as that of \cite[Lemma 3.4]{DCGRS20}: the argument only relies on the FKG-inequality, which holds in the present context, see \cite[Theorem 3.1]{MR2525105}, and the invariance of the law of $\mathcal{V}^u$ under lattice symmetries. The assumption \eqref{eq:cond-u-twopointsbound} replaces the condition on $h$ appearing in \cite{DCGRS20}.

 We now show \eqref{eq:twopointsbound}. By the FKG-inequality, we may assume that $y=0$ and that $r\geq 10$. Let $L_k \stackrel{{\rm def}.}{=} 2^{k}$, for $k \geq 0$. Still by the FKG-inequality, we may suppose that 
	$B(x,4) = B(x, 4L_0) \subset B_r$. Let $k_0 = \max\{ k: B(x, 4L_k) \subset B_r\}$ and $k_1 \geq k_0$ be such that 
$L_{k_1+2}\leq r < L_{k_1+3}$. 

We define a sequence of vertices $y_k$ for $ k \in [[k_0, k_1]] \stackrel{{\rm def}.}{=} [k_0 , k_1] \cap \Z$ inductively as follows. 
Let $y_{k_0}= x$ and note that $B(y_{k_0}, 4L_{k_0}) \subset B_r$ by choice of $k_0$. 
Assuming $y_{k_0},\dots, y_{\ell}$ have been defined for some $ k_0 \leq \ell < k_1$ and that 
$B(y_{\ell},4L_{\ell}) \subset B_r$, we define $y_{\ell+1}$ in the following way: first we 
choose an intermediate point $y_{\ell + 1/2}$ deterministically on $\partial B(y_{\ell}, 
L_{\ell +1})$ such that $B(y_{\ell+1/2}, 4L_{\ell}+ L_{\ell +1}) \subset B_r$ (for instance, the point on $\partial B(y_{\ell}, 
L_{\ell +1})$ minimizing the $\ell^1$-distance to the origin will work). 
Then we repeat this and pick $y_{\ell+1}$ on $\partial B(y_{\ell+1/2}, L_{\ell +1})$ such that 
$B(y_{\ell+1}, 4L_{\ell}+ 2L_{\ell +1})= B(y_{\ell+1}, 4L_{\ell+1}) \subset B_r$. 

The sequence of points thereby constructed has the following property: for all $\ell \in [[k_0 , k_1]]$ and $k \in \{ \ell,\ell+1/2 \}$,
\begin{equation}
\label{eq:connectLB1}
y_{k+\frac12} \in B(y_k, L_{\lfloor k \rfloor +1} )  \text{ and } B_k \stackrel{\text{def.}}{=} B(y_k, 2L_{\lfloor k \rfloor +1} ) \subset B_r.
\end{equation}
Thus, we obtain that 
\begin{multline*}
	\P\big[\lr{}{\mathcal V^u \cap B_{r}}{x}{y_{k_1}}\big] \\
	\stackrel{\eqref{eq:connectLB1}}{\geq} \prod_{k_0\leq \ell < k_1} \prod_{k \in \{ 
		\ell,\ell+\frac12 \}}\P\big[\lr{}{\mathcal V^u \cap B_k}{y_k}{y_{k+\frac12}}\big] \stackrel{\eqref{eq:twopointsboundweaker}}{\geq}c(\kappa )\prod_{0 \leq k \leq k_1} L_k^{-C} 
	\geq c(\kappa )e^{-c'(\kappa)(\log r)^2}, 
	\end{multline*}noting in the last step that $k_1 \leq C \log r$ by definition. Since $B(y_{k_1}, cr) \subset B_r$, it easily follows by suitably covering $B_r$ with a constant number of boxes of radius, say, $cr/10$, using \eqref{eq:twopointsboundweaker} and the FKG-inequality that $\P\big[\lr{}{\mathcal V^u \cap B_{r}}{0}{y_{k_1}}\big] \geq c(\kappa)r^{-c}$, and \eqref{eq:twopointsbound} follows.
\end{proof}

We conclude this section with a certain decoupling 
estimate, see Proposition~\ref{prop:cond_decoup} below. To this effect, we start by setting up a decomposition of trajectories into excursions. This framework will also be useful in the next section.
 We assume henceforth that for \textit{any} realization $\omega =\sum_{i \geq 0} \delta_{(w_i^*, u_i)} \in \Omega $, 
the labels $u_i$, $i \geq 0$, are pairwise distinct, that $\omega(W_A^*\times[0,u])<\infty$ for all $u 
\geq 0$ and that $\omega(W_A^*\times \R_+)=\infty$, which is no loss of generality since these sets 
have full $\P$-measure.

We will use the following excursion decomposition. Let $A ,U$ be finite subsets of $\mathbb{Z}^d$ 
with $\emptyset \neq A \subset U$. The (doubly) infinite 
transient trajectories, i.e.~elements of $W$ or $W^+$, see around \eqref{eq:equilib_K} for notation, are split into excursions between $A$ and $\partial_{{\rm out}} U$ by introducing the successive return and departure times 
between these sets: $D_0 =0$ and 
\begin{equation*} 
  \begin{aligned}
    & R_k = D_{k-1} + H_A \circ \theta_{D_{k-1}} , \qquad &  D_k = R_k + T_U \circ \theta_{R_k},
  \end{aligned}
\end{equation*}
 for $k \geq 1$, where all of $D_k, R_j, D_j$, $j > k$ are understood to be $=\infty$ whenever $R_k=\infty$ for some $k \geq 0$.
We denote by $W_{A, \partial_{{\rm out}} U}^+$ the set of all excursions between $A$ and $\partial_{{\rm out}} U$, i.e.~all finite nearest neighbor trajectories starting in $A$, ending in $\partial_{{\rm out}}U$ and not exiting $U$ in between. 
Given $\omega=\sum_{i \geq 0} \delta_{(w_i^*, u_i)}$, we order all the excursions from $A$ to $\partial_{{\rm out}} U$, first by increasing value of $\{u_i: w_i^* \in W_A^*\}$, 
then by order of appearance within a given trajectory $w_i^* \in W_A^*$. This yields a sequence of 
$W^+_{A, \partial_{{\rm out}} U}$-valued random variables under $\P$, encoding the successive 
excursions:
\begin{equation}
\label{eq:RI_Z}
\big( Z^{A, U}_n(\omega) \big)_{n \geq 1} \stackrel{\text{def.}}{=} \big( w_0[R_1, D_1], \dots, 
w_0[R_{N^{A, U}}, D_{N^{A, U}}],\, w_1[R_1, D_1], \dots \big),
\end{equation}
where $N^{A,U}= N^{A,U}(w_0^*)$ is the total number of excursions from $A$ to $\partial_{{\rm out}} 
U$ in $w_0^*$, i.e. $N^{A, U}(w_0^*) = \sup\{j : D_j(w_0) < \infty\}$ and $w_0$ is any point in the 
equivalence class $w_0^*$. We will omit the superscripts $A,U$ whenever no risk of confusion arises.

We now associate to $\omega$ a finite 
multiset $\mathcal{C}_u= \mathcal{C}_u^{A, U}(\omega)$, for $u \geq 0$, obtained by collecting the pairs 
of start- and endpoints of any excursion between $A$ and $\partial_{{\rm out}} U$ in the support of $\omega$ with 
label at most $u$, and forgetting their labels. That is, $\mathcal{C}_u$ comprises all pairs $(x,y) \in \partial A \times \partial_{{\rm out}} U$  such that $x=w^*(R_k)$, $y= w^*(D_k)$ for some $1\leq k \leq N(w^*)$ and  $w^* \in 
W^*$ such that $(w^\ast, v) \in \text{supp}(\omega)$ for some $v \leq u$ (note that this gives rise to a multiset 
since pairs can appear repeatedly). The random variable $\mathcal{C}_u$ takes values in the measure space $(\Omega_{\mathcal{C}}, \mathcal{A}_{\mathcal{C}})$ (depending implicitly on $A$ and $U$). One can simply take $\mathcal{A}_{\mathcal{C}}=2^{\Omega_{\mathcal{C}}}$ since 
$\Omega_{\mathcal{C}}$ is countable.

Let $B \subset A$. The following decoupling result will apply conditionally on the 
endpoints of the successive excursions appearing in \eqref{eq:RI_Z} (thereby typically 
decoupling a set $B$ well inside $A$, for $A \subset U \subset \subset \Z^d$). For any pair of points $(x, y) \in \mathcal C_{u}$ associated to a labeled trajectory in the support of $\omega$ \textit{visiting $B$}, one considers the induced sub-trajectory, starting from the time it first visits $B$, until the last time the trajectory is in $A$ prior to $T_U$. Let $\mathcal 
D_u$ denote the collection of all such sub-trajectories and $(\Omega_{\mathcal D}, 2^{\Omega_{\mathcal D}})$ denote the measure space underlying it ($\Omega_{\mathcal D}$ is 
countable just like $\Omega_{\mathcal C}$). We assume that $\Omega_{\mathcal{D}}$ carries a cemetery state $\Delta \in \Omega_{\mathcal{D}}$ corresponding to pairs $(x,y) \in \mathcal{C}_u$ whose associated trajectory doesn't visit $B$. Hence, one has that\begin{equation}\label{eq:IuDu}
\mathcal I^u \cap B = \mathcal I(\mathcal D_u) \stackrel{{\rm def}.}{=} \bigcup_{w \in \mathcal D_u} {\rm 
range}(w) \cap B,
\end{equation}
(with the convention $\text{range}(\Delta)= \emptyset$), i.e.~the interlacement set inside $B$ is a function of $\mathcal D_u$.

\begin{prop}\label{prop:cond_decoup}
There exist $\Cl[c]{c:box_gap} \in (0, \frac 12)$ and $\Cl{C:box_gap} > 0$ with the following properties:

\smallskip
\noindent \begin{itemize}
\item[i)]	 
For all $r \geq C$ there exist sets $A, U$ with $ B=B_{r} \subset A \subset U \subset B_{r + r^{1 - \Cr{c:box_gap}}}$ 
such that, for every $0 < u$ 
and 
$\delta \in (0, 1)$, one can find 
$\mathcal{A} = \mathcal{A}(B) \in \mathcal{A}_{\mathcal{C}}$ with
\begin{equation}\label{eq:good_event_excursion_bnd}
\P\big[ \mathcal C_{u}  \in \mathcal{\mathcal A} \big]\geq 1 - e^{-\Cr{c:box_gap}  u \delta^2 
	r^{\Cr{c:box_gap}}}		
\end{equation}
and for all fixed 
$\zeta \in \mathcal A$, there is a coupling $\hat{\mathbb Q}=\hat{\mathbb Q}_{\zeta}$ 
of 
three 
$\Omega_{\mathcal D}$-valued random variables $(\hat{\mathcal D}_{u(1-\delta)}, \hat{\mathcal D}, \hat{\mathcal D}_{u(1+\delta)})$ such that $(\hat{\mathcal D}_v : v \in \{u(1 \pm \delta)\}) \stackrel{{\rm law}}{=} (\mathcal D_{v}: v \in \{u(1 \pm \delta)\})$ and 
$\hat{\mathcal D}$ having the law of 
$\mathcal D_{u}$ under 
$\P[\, \cdot \, |  \mathcal C_{u} = \zeta]$, and
\begin{equation}\label{eq:excursion_sandwich}
\hat{\mathbb Q}\big[ \hat{\mathcal D}_{u(1 - \delta)}  \subset \hat{\mathcal D} \subset \hat{\mathcal 
D}_{u(1+ \delta)} \big] \geq 1 - \Cr{C:box_gap} e^{-\Cr{c:box_gap}  u\delta^2r^{\Cr{c:box_gap}}}.
\end{equation}
\item[ii)]  With $B(=B_r)$, $A$ and $U$ as in item i), letting $\mathcal V^u_B= \mathcal V^u \cap B$ and defining 
\begin{equation}\label{def:GBudelta}
\Xi_{B}^{u, \delta} = \bigg\{ \zeta \in \Omega_{\mathcal{C}} : \begin{array}{l} \E\big[ f(\mathcal V_B^u ) \, \big |  \, \mathcal C_u = \zeta \big] \geq \E\big[ f(\mathcal V_B^{u(1+\delta)}) \big] - 
\Cr{C:box_gap}e^{-\Cr{c:box_gap} u {\delta}^2 r^{\Cr{c:box_gap}}} \\[0.3em] \text{for all increasing functions $f : \{0, 1\}^{B} 
		\mapsto [0, 1]$}\end{array}\bigg\},	
\end{equation}
one has for every $r \ge C$, $u , \delta > 0$, letting $\mathcal G_B^{u, \delta} = \{\mathcal C_{u} \in \Xi_{B}^{u, \delta}\}$, that
\begin{equation}\label{eq:corcond_decouple}
	\P[\mathcal G_{B}^{u, \delta}] \ge 1 - e^{-\Cr{c:box_gap} u {\delta}^2r^{\Cr{c:box_gap}}}.
\end{equation}
\end{itemize}
\end{prop}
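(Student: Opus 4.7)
This statement is a quantitative version of the conditional decoupling of Alves--Popov \cite{CaioSerguei2018}, adapted so that (a) the sandwiching variables $\hat{\mathcal D}_{u(1\pm\delta)}$ have \emph{exactly} the unconditional marginal laws and (b) the level is sprinkled by $\delta$ on each side. I would follow their blueprint with these modifications. Fix a small exponent $\beta \in (0, \tfrac12)$, to be tuned, set
\[
A = B_{r + \lfloor r^{\beta}\rfloor}, \qquad U = B_{r + \lfloor r^{1-\beta}\rfloor},
\]
and take $\Cr{c:box_gap}$ to be a suitable fraction of $\beta$. The quantitative inputs are $\text{cap}(A) \asymp r^{d-2}$ and, for $x \in \partial A$, $P_x[H_B < T_U] \ge r^{-C\beta}$, so that the intensity of those excursions in \eqref{eq:RI_Z} that actually visit $B$ grows polynomially in $r$.

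\textbf{Excursion description and construction of $\hat{\mathbb Q}_\zeta$.} By the standard excursion description of random interlacements (cf.~\cite{MR2680403}), the ordered excursions in \eqref{eq:RI_Z} at level $u$ form a Poisson process on $W^+_{A, \partial_{\textrm{out}} U}$ of intensity $u\mu$ for an explicit finite measure $\mu$ of total mass $\text{cap}(A)$, and conditionally on $\mathcal C_u = \zeta$ the excursions are independent random walk bridges conditioned to exit $U$ at their prescribed endpoints; hence $\mathcal D_u$ is an independent functional of $\mathcal C_u$ and these bridges. I define $\mathcal A$ as the set of $\zeta$ whose endpoint-pair multiplicities $n_\zeta(x,y)$ lie within $(1 \pm \delta/2)\, u \mu(x,y)$ for every pair with $u\mu(x,y) \geq 1$; Bennett's inequality applied to the independent Poisson counts together with a union bound over the polynomially many pairs $(x,y)$ gives \eqref{eq:good_event_excursion_bnd}. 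For fixed $\zeta \in \mathcal A$, I would construct $\hat{\mathbb Q}_\zeta$ in two stages. \emph{Stage~1 (monotone Poisson coupling):} thin $\zeta$ by independently retaining each excursion with probability $1-\delta$ to obtain $\mathcal C^-$, enlarge $\zeta$ by an independent Poisson family of endpoint-pairs of intensity $\delta u \mu$ to obtain $\mathcal C^+$, and sample an independent random walk bridge for each excursion; the resulting sub-trajectory triples $(\hat{\mathcal D}^-, \hat{\mathcal D}, \hat{\mathcal D}^+)$ satisfy the sandwich $\hat{\mathcal D}^- \subset \hat{\mathcal D} \subset \hat{\mathcal D}^+$ \emph{deterministically}, but the auxiliary laws of $\mathcal C^\pm$ given $\zeta$ differ from the target unconditional laws of $\mathcal C_{u(1\pm \delta)}$ because $\zeta$ is fixed rather than Poisson. \emph{Stage~2 (Strassen correction):} for $\zeta \in \mathcal A$, the total variation distance between the stage-1 law of $\mathcal C^\pm$ and that of $\mathcal C_{u(1 \pm \delta)}$ is bounded pair by pair by Poisson TV comparisons between $\textrm{Poisson}(n_\zeta(x,y) \pm \delta u \mu(x,y))$ and $\textrm{Poisson}((1 \pm \delta) u \mu(x,y))$, which by the definition of $\mathcal A$ sum to at most $\Cr{C:box_gap}\, e^{-\Cr{c:box_gap} u \delta^2 r^{\Cr{c:box_gap}}}$. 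Strassen's coupling lemma then re-couples the triple so that $\hat{\mathcal D}_{u(1 \pm \delta)}$ have the exact unconditional marginals, at the cost of losing the sandwich only on an event of this probability; this is \eqref{eq:excursion_sandwich}.

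\textbf{Deduction of (ii) and main obstacle.} Part (ii) follows directly from (i): for $\zeta \in \mathcal A$ and an increasing $f : \{0,1\}^B \to [0,1]$, \eqref{eq:IuDu} gives $\mathcal V^v_B = B \setminus \mathcal I(\mathcal D_v)$, and the inclusion $\hat{\mathcal D} \subset \hat{\mathcal D}_{u(1+\delta)}$ under $\hat{\mathbb Q}_\zeta$ yields $f(B \setminus \mathcal I(\hat{\mathcal D})) \geq f(B \setminus \mathcal I(\hat{\mathcal D}_{u(1+\delta)}))$ on the sandwich event; taking $\hat{\mathbb Q}_\zeta$-expectations and using the exact marginals of $\hat{\mathcal D}$ (law of $\mathcal D_u$ given $\mathcal C_u = \zeta$) and of $\hat{\mathcal D}_{u(1+\delta)}$ (unconditional) places $\zeta \in \Xi_B^{u, \delta}$, so $\mathcal G_B^{u, \delta} \supset \{\mathcal C_u \in \mathcal A\}$ and \eqref{eq:corcond_decouple} reduces to \eqref{eq:good_event_excursion_bnd}. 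The main obstacle is the tension between the two requirements on $\hat{\mathbb Q}_\zeta$: a direct thinning/enlargement of $\zeta$ gives the inclusion for free but with the wrong marginals, while sampling the marginals from their unconditional laws destroys the inclusion. Strassen's correction bridges this tension at the price of a TV error, which one must show to be exponentially small in $u\delta^2 r^{\Cr{c:box_gap}}$. This in turn dictates the choice of $\beta$: large enough that the per-pair Poisson counts into $B$ are polynomial in $r$ (so Bennett's inequality yields the exponential concentration in defining $\mathcal A$), yet small enough that $U$ still fits inside $B_{r + r^{1 - \Cr{c:box_gap}}}$ as required by the statement.
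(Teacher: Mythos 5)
Your proposal has a genuine gap at its foundation. You assert that ``the ordered excursions in \eqref{eq:RI_Z} at level $u$ form a Poisson process on $W^+_{A, \partial_{\rm out} U}$ of intensity $u\mu$,'' and likewise treat the multiset $\mathcal C_u$ of endpoint pairs as a Poisson superposition so that it can be thinned and enlarged by independent Poisson marks and controlled by Bennett's inequality. This premise is false. The underlying \emph{trajectories} hitting $A$ indeed form a Poisson process, but each trajectory contributes a random, geometrically-sized batch of excursions, and within a batch the departure point of one excursion determines (via the walk's return to $A$) the entrance point of the next. Hence $\mathcal C_u$ is a cluster process relative to the Poissonian trajectories, not a Poissonian multiset, and the per-pair multiplicities $n_\zeta(x,y)$ are neither independent nor Poisson; there is no direct Bennett bound and no legitimate thinning/enlargement. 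This dependence is precisely the obstacle that the soft local time machinery of Popov--Teixeira and Alves--Popov was invented to circumvent, and the paper's proof leans on exactly that: it invokes \cite[Proposition~4.2]{CaioSerguei2018}, which produces a coupling $\mathcal P$ sandwiching the conditional soft local time $G^{\bar\zeta}$ between the unconditional ones $G^\Sigma_{u(1\pm\delta)}$, then defines $\mathcal A$ directly in terms of when this coupling succeeds. Your two-stage ``Poisson thinning $+$ Strassen correction'' can therefore not replace it.

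Two secondary points that your blind proposal would also need to address. First, the geometry matters: the paper takes $A$ and $U$ to be \emph{rounded} boxes (unions of $\ell^2$-balls with specific radii tied to the parameter $b_\square$ in \cite{CaioSerguei2018}), not plain $\ell^\infty$-boxes, because the soft local time comparison used there requires certain Harnack-type regularity that fails at box corners; the constant $\Cr{c:box_gap}$ is then chosen small enough that $U\subset B_{r+r^{1-\Cr{c:box_gap}}}$. Second, the event $\mathcal A$ in the paper is not characterized by closeness of multiplicities, but by the probability that the coupling succeeds conditionally on $\bar\zeta$ being at least $1-\Cr{C:box_gap}e^{-\alpha/2}$; \eqref{eq:good_event_excursion_bnd} is then extracted by averaging the annealed success probability over $\mathcal A$ vs.\ $\mathcal A^c$, which is a different and more economical bookkeeping. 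On the positive side, your derivation of item~(ii) from item~(i) via \eqref{eq:IuDu} and monotonicity of increasing $f$ is correct and matches the paper's one-line deduction.
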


\begin{proof}
As we now explain, $i)$ essentially follows from \cite[Propositions~4.2]{CaioSerguei2018} with some modifications.
We first define the relevant sets $A$ and $U$. For $r \geq 1$, we let $s=r^{\frac1{b}}$ with (say) $b= \frac12(1 + \frac{4d-4}{3d-2})$. The choice of $b$ corresponds to a valid choice of the quantity $b_{\square}$ in \cite[(1.7)]{CaioSerguei2018}. Note that $b>1$ for all $d \geq 3$. 
Now, for a given set $B=B_r$, introduce the rounded boxes $A= \bigcup_{x \in B_{r+s}} B^2(x,s)$, where $B^2(x,s)$ refers to the $\ell^2$-ball in $\Z^d$ of radius $s$ around $x$. The set $U$ is defined similarly as $A$, with the union ranging over all $x \in B_{r+2s}$ instead. This gives $A\subset U \subset B_{r + r^{1 - \Cr{c:box_gap}}}$ upon choosing $ \Cr{c:box_gap} >0$ small enough.
The ref.~\cite{CaioSerguei2018} involves sets $A_1$-$A_3$ and $V$, and one sets $A_2=A_3 = U^c$ and $V= \partial A$. The set $A_1$ corresponds to the region in which the coupling operates (one considers excursions upon hitting $A_1$ until their last visit to $V$ prior to hitting $\partial A_2$, see \cite[(3.7-8)]{CaioSerguei2018}) and one sets  $A_1= \bigcup_{x \in B_{r}} B^2(x,s)$, so that $A_1 \supset B$. 

With these choices one applies 
\cite[Propositions~4.2]{CaioSerguei2018}, which yields a coupling $\mathcal{P}$ such that (in the notation of \cite{CaioSerguei2018}, see in particular (4.2) therein), %
\begin{equation}\label{e:caio-result}
\E\Big[ \mathcal{P}\big[ G^{\Sigma}_{u(1-\delta)}\leq G^{\zeta} \leq G^{\Sigma}_{u(1+\delta)} \big] \big\vert_{\bar\zeta= \bar{\mathcal{C}}_u} \Big] \geq 1 - \Cr{C:box_gap}e^{-\Cl[c]{c:caio-annealed}  u\delta^2r^{\Cr{c:box_gap}}};
\end{equation}
here, the outer expectation is with respect to $\P$ and acts on 
$\bar{\mathcal{C}}_u$ alone, $(G^{\Sigma}_u)_{u \geq 0}$ refers to the soft local time of 
the process $(\mathcal D_u)_{u \ge 0}$ and $G^{\bar\zeta}$ 
to that of 
the process $\mathcal D_u$ at level $u$ 
whose {\em clothesline process} 
$\bar{\mathcal{C}}_u$ is conditioned to equal $\bar{\zeta}$. 
As opposed to $\mathcal{C}_u$, the process $\bar{\mathcal{C}}_u$ keeps track of the order of occurrence 
of points similarly as in \eqref{eq:RI_Z}. The event in \eqref{e:caio-result} in turn readily implies the chain of inclusions
appearing in \eqref{eq:excursion_sandwich}, with the correct marginal laws for the 
$i)$, one defines $\hat{\mathbb{Q}}$ as the induced joint law of the 
three sets in question under $\mathcal{P}$. Observe that the law 
$\hat{\mathbb{Q}} =  \hat{\mathbb{Q}}_{\zeta}$ is indeed a function of 
$\zeta$ alone (with hopefully obvious notation, $\zeta$ denotes the multi-set associated to $\bar{\zeta}$): 
for, reconstructing $\mathcal{V}_B^u$ under $\P[\cdot | \bar{\mathcal{C}}_u=\bar\zeta]$ does not require 
knowing the order of appearance of elements in $\bar\zeta$. One then sets
\begin{equation}
\label{e:A_explicit}
\mathcal{A} =  \big\{ \zeta : \, \hat{\mathbb{Q}}_{\zeta}\big[ \hat{\mathcal D}_{u(1 - \delta)}  \subset 
\hat{\mathcal D} \subset \hat{\mathcal D}_{u(1+ \delta)} \big] \geq 1 - 
\Cr{C:box_gap}e^{-\frac{\Cr{c:caio-annealed} }{2} u{\delta}^2r^{\Cr{c:box_gap}}}\big\}.
\end{equation}
With \eqref{e:A_explicit}, \eqref{eq:excursion_sandwich} is immediate, and \eqref{eq:good_event_excursion_bnd} follows by \eqref{e:caio-result}, upon noticing that the left-hand side of \eqref{e:caio-result} is bounded from above by 
$$\E\Big[ \hat{\mathbb{Q}}_{\zeta}\big[ \hat{\mathcal D}_{u(1 - \delta)}  \subset \hat{\mathcal D} \subset 
\hat{\mathcal D}_{u(1+ \delta)} \big] \big\vert_{\zeta= {\mathcal{C}}_u} \Big]$$
by monotonicity. Distinguishing in the previous display whether $\mathcal{A}$ or $\mathcal{A}^c$ occur, 
and using the upper bound implied by \eqref{e:A_explicit} in the latter case then readily gives the inequality 
$x+(1 - \Cr{C:box_gap}e^{-\frac{\alpha}{2}})(1-x) \geq 1 - \Cr{C:box_gap}e^{-\alpha}$, with $x = \P[ (\mathcal C_{u}, \mathcal C_{u'})  \in 
\mathcal{\mathcal A} ]$ and $\alpha = \Cr{c:caio-annealed}  u{\delta}^2r^{\Cr{c:box_gap}}$, from which \eqref{eq:good_event_excursion_bnd} follows. Item $ii)$ is a straightforward consequence of $i)$ and \eqref{eq:IuDu}. 
\end{proof}

For later reference, we conclude with the following observations. 
\begin{remark}\label{R:Caio}
\begin{enumerate}[label*=\arabic*)]
 \item \label{rem:caio_monot} (Monotonicity in \eqref{def:GBudelta}).
By inclusion, the multisets $\zeta$ carry a natural partial order and $\Xi_{B}^{u, \delta} $ is decreasing with respect to this partial order. Indeed if $\zeta' \supset \zeta$ contains more (pairs of) points, then $\E[ f(\mathcal V_B^u ) \,  |  \, \mathcal C_u = \zeta' ] \leq \E[ f(\mathcal V_B^u ) \,  |  \, \mathcal C_u = \zeta ]$, for $\zeta'$ requires constructing additional (independent) random walk bridges (having the correct marginal law) to connect the additional points present in $\zeta'$, which decreases $\mathcal V_B^u$.

\item \label{eq:monotonicity_Du} (Monotonicity with respect to $\mathcal D_u$). For any $B' = B(x', r')$ such that $B(x', r' + {r'}^{1 - \Cr{c:box_gap}}) \subset B$, with $A'$ and $U'$ referring to the sets $A$ and $U$ for the box $B'$, cf.~Proposition~\ref{prop:cond_decoup},$i)$,
\begin{equation}\label{eq:monotone_Cu}
	\text{$\mathcal C_u^{A', U'}$ is a monotonically increasing function of $\mathcal D_u$ with respect to inclusion.}
\end{equation}

\item \label{rem:caio_multiple}
(Multiple $u$'s in \eqref{eq:excursion_sandwich}). Let $u_1, \ldots, u_k > 0; \, \delta_1, \ldots, \delta_k \in (0, 1)$, and $A, U$ and $r$ be as in item~i) of 
Proposition~\ref{prop:cond_decoup}. Then there exists an event $\mathcal A \in \mathcal A_{\mathcal{C}}^{\otimes k}$ with 
\begin{equation}\label{eq:good_event_mul_excursion_bnd}
\P\big[ (\mathcal C_{u_1},\ldots, \mathcal C_{u_k})   \in \mathcal{\mathcal A} \big]\geq 1 - e^{-\Cr{c:box_gap} u {\delta}^2 r^{\Cr{c:box_gap}}},
\end{equation}
where $u \stackrel{{\rm def}.}{=} \min\{ u_1, \ldots, u_k\}$ and $\delta \stackrel{{\rm def}.}{=} \min\{ \delta_1, 
\ldots, \delta_k\}$, and for all fixed $(\zeta_1, \ldots, \zeta_k) \in \mathcal A$, there is a coupling $\hat{\mathbb Q} = \hat{\mathbb Q}_{\zeta_1, \ldots, \zeta_k}$ between the family of $\{0, 1\}^{\Z^d}$-valued random 
variables $(\hat{\mathcal D}_{v} : v \in \{u_i(1 \pm \delta_i): 1 \le i \le k\}) \stackrel{{\rm law}}{=} (\mathcal D_{v}  : v \in \{u_i(1 \pm \delta_i) : 1 \le i \le k\})$ and $(\hat{\mathcal D}_1, \ldots, \hat{\mathcal D}_k)$ having 
the law of $(\mathcal D_{u_1}, \ldots, \mathcal D_{u_k})$ under $\P[\, \cdot \, |  (\mathcal C_{u_1}, \ldots, \mathcal C_{u_k}) = (\zeta_1, \ldots, \zeta_k)]$, and
\begin{equation}\label{eq:mul_excursion_sandwich}
\hat{\mathbb Q}\big[ \hat{\mathcal D}_{u_i(1 - \delta_i)}  \subset \hat{\mathcal D}_i  
\subset \hat{\mathcal D}_{u_i(1+ \delta_i)}\ \text{ for all } 1\leq i \leq k \big] \geq 1 - k\,\Cr{C:box_gap} e^{-\Cr{c:box_gap}  
u{\delta}^2r^{\Cr{c:box_gap}}}.
\end{equation}
This follows from a minor modification to the argument used in the proof of 
Proposition~\ref{prop:cond_decoup}. Indeed, (a slight extension of) \cite[Propositions~4.2]{CaioSerguei2018} also gives, for all $1 \le i \le k$ (cf.~\eqref{e:caio-result}),
\begin{equation}\label{eq:mul_caio-result}
\E\Big[ \mathcal{P}\big[ G^{\Sigma}_{u_i(1-\delta_i)}\leq G^{\bar\zeta_i} \leq G^{\Sigma}_{u_i(1+\delta_i)} \big] \big\vert_{\bar\zeta_1 = \bar{\mathcal{C}}_{u_1}, \ldots, \bar\zeta_k = \bar{\mathcal{C}}_{u_k}} \Big] \geq 1 - \Cr{C:box_gap}e^{-\Cl[c]{c:caio-annealed}  u\delta^2r^{\Cr{c:box_gap}}},
\end{equation}
where $\E$ now acts on $( \bar{\mathcal{C}}_{u_1}, \ldots, \bar{\mathcal{C}}_{u_k})$. The remainder then 
follows in the same manner as before, applying a union bound over $k$ to deduce 
\eqref{eq:mul_excursion_sandwich} from \eqref{eq:mul_caio-result}. 
\end{enumerate}
\end{remark}

\section{Sprinkled finite energy property}

\label{sec:sfe}

We now derive a separate ingredient for our proof of Theorem~\ref{T:MAIN}, which we call sprinkled finite energy. Roughly speaking, the event $\widetilde{F}_B$ introduced in Proposition~\ref{lem:finite_energy} below is designed with the following property in mind: $\widetilde{F}_B$ enables us to open up the box $B$ in $\mathcal V^{u - \delta}$ with not too degenerate probability conditionally on carefully chosen information, including $\mathcal I^u$ as well as all starting and endpoints of excursions within a larger box $\widehat B$, starting from its boundary. Note in particular that this entails a `buffer' zone $\widehat{B} \setminus B$, which is non-negotiable. We will eventually use this tool at the bottom scale in the upcoming bridge construction, in order to `plug' its remaining holes (see the beginning of Section~\ref{sec:bridging}, where holes will be precisely defined).

Stating the sprinkled finite energy property precisely requires a minimal amount of preparation. 
For $B$ a box and $u \ge 0$, let $\widetilde{\mathcal 
C}_u^{B} = \widetilde{\mathcal C}_u^{B}(\omega)$ denote the (finite) sequence containing the pairs of start- and endpoints of the 
successive excursions between $B$ and $\partial_{{\rm out}} B$ in the support of $\omega$ with label at 
most $u$, in order of appearance and with their associated labels; recall the allied notion $\mathcal C_{u}^{B, B}$ 
introduced below \eqref{eq:RI_Z}, where in contrast both the labels and the order of appearance were forgotten.
For any $w^{*} \in W^{*}$, we denote by $\phi_{B}^{-}(w^*)$ the 
sequence of 
segments (sub-paths) of $w^\ast$ obtained when removing the \textit{interiors} of all 
excursions in $w^\ast$ between $B$ and $\partial_{{\rm out}} B$, i.e.~with the exception of their start- and endpoints. The segments are arranged according to order of appearance within $w^*$. Now let
\begin{equation}\label{def:omega-BPhiBu}
	\omega_{B}^{-}= \sum_{(w^\ast, v) \in \omega} \delta_{(\phi_{B}^{-}(w^\ast), v)}.
\end{equation}
In particular, $\widetilde{\mathcal C}^B_u$ is measurable relative to $\omega_{B}^{-}$.

\begin{prop}[Sprinkled finite energy]\label{lem:finite_energy}
For any $u \ge \delta > 0$, $x \in \Z^d$, $r_0,r \geq 1$, $B=B(x,r)$ and $\widehat B=B(x, r + 7{r_0})$, there exists an event $\widetilde{F}_B=\widetilde{F}_B^{u,\delta, r_0}$ having the following properties:
\begin{align}
&\label{eq:mathscrEBmeasurable}
\text{$\widetilde{F}_B$ is measurable relative to $(\widetilde{\mathcal C}^{\widehat B}_u,  \mathcal I^u \cap \widehat B)$},\\
&\label{eq:finite_energy}
\P\big[B\subset \mathcal V^{u - \delta} \, \big| \, \sigma ( \omega_{\widehat{B}}^{-},\, \mathcal 
I^u \cap \widehat B )\,\big]1_{\widetilde{F}_B}\ge e^{-C(r \vee {r_0})^{2d}},\\
&\label{e:F_B-proba} \P[\widetilde F_B^c] \leq C(r \vee r_0)^d e^{-c (u \wedge \delta) r_0^c}.
\end{align}
\end{prop}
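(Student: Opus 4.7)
The plan is to exploit the excursion decomposition of $\omega$ relative to $\widehat B$. Conditional on $\omega^-_{\widehat B}$, the interiors of the excursions between $\widehat B$ and $\partial_{\text{out}}\widehat B$ form a conditionally independent family of random walk bridges, each running from its prescribed entry point $x_i \in \partial \widehat B$ to its exit point $y_i \in \partial_{\text{out}}\widehat B$. Accordingly, the problem reduces to surgically re-routing these bridges, subject to the further constraint imposed by the conditioning on $I := \mathcal I^u \cap \widehat B$ that the union of their traces inside $\widehat B$ equal exactly $I$.

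I would take $\widetilde F_B$ to be the intersection of (a) the event $\{N^u \le N_0\}$, where $N^u$ is the number of excursions into $\widehat B$ with label at most $u$ and $N_0 := C u (r \vee r_0)^d$, and (b) an auxiliary event ensuring that whenever $I \cap B \ne \emptyset$ at least one excursion with label in $(u - \delta, u]$ enters $\widehat B$ (together with, if needed, a mild connectivity condition on $I$ inside $\widehat B \setminus B$, which holds with high probability by virtue of the connectivity of the interlacement cloud). Both events are measurable with respect to $(\widetilde{\mathcal C}^{\widehat B}_u, I)$, proving \eqref{eq:mathscrEBmeasurable}. Since $N^u$ and the number of $(u - \delta, u]$-labelled excursions are Poisson with means of order $u\,\mathrm{cap}(\widehat B)$ and $\delta\,\mathrm{cap}(\widehat B) \asymp \delta r_0^{d-2}$ respectively, standard Poisson tail estimates combined with a union bound over the $|\widehat B| \lesssim (r \vee r_0)^d$ vertices inside $\widehat B$ then yield the probability bound \eqref{e:F_B-proba}.

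For \eqref{eq:finite_energy}, I would apply Bayes' formula to reduce to the explicit construction of a single realization $\mathcal E$ of the bridge interiors that simultaneously implies $B \subset \mathcal V^{u - \delta}$ and $\mathcal I^u \cap \widehat B = I$, namely
\[
\P\big[B \subset \mathcal V^{u - \delta} \,\big|\, \omega^-_{\widehat B},\, I \big] \;\ge\; \P\big[\mathcal E \,\big|\, \omega^-_{\widehat B}\big].
\]
On $\widetilde F_B$, I would prescribe each $\le u - \delta$-bridge to follow a fixed nearest-neighbour path from $x_i$ to $y_i$ lying inside the annular buffer $\widehat B \setminus B$ (of width $7 r_0$), and the $(u - \delta, u]$-bridges to collectively cover the residual points of $I$ by means of paths that include a dedicated traversal of $I \cap B$. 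Each prescribed path being of length at most $C(r \vee r_0)^d$, the conditional independence of the bridges together with the elementary bound $P_{x_i}[\text{specific nearest-neighbour path}] \ge (2d)^{-C(r \vee r_0)^d}$ multiply to give $\P[\mathcal E \mid \omega^-_{\widehat B}] \ge e^{-C N_0 (r \vee r_0)^d} \ge e^{-C'(r \vee r_0)^{2d}}$, which is \eqref{eq:finite_energy}. The main obstacle is the combinatorial design of these prescribed paths: the constraint that their combined trace in $\widehat B$ equal exactly $I$ forces every chosen path to stay inside $I$ (until its terminal step onto $\partial_{\text{out}}\widehat B$), which in turn hinges on suitable connectivity of $I$ within $\widehat B \setminus B$. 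I would address this via the auxiliary connectivity condition built into $\widetilde F_B$, routing a single sprinkled excursion through all of $I \cap B$ and letting each remaining bridge follow a short detour in $I \setminus B$ between its endpoints.
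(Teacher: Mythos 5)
Your proposal follows essentially the same route as the paper: the excursion decomposition relative to $\widehat B$, a good event built from a cap on the amount of excursion traffic together with a prescribed-path constraint anchored to $I = \mathcal I^u\cap\widehat B$, and the re-routing of the conditionally independent bridges onto deterministic paths whose union is exactly $I$. There are, however, two concrete gaps and one overlooked subtlety.

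First, your cap $N_0 = Cu(r\vee r_0)^d$ on the number of excursions is proportional to $u$. Plugging it into your estimate gives $\P[\mathcal E\mid\omega_{\widehat B}^-]\ge e^{-Cu(r\vee r_0)^{2d}}$, which for $u$ larger than a constant is strictly weaker than the $u$-uniform bound $e^{-C(r\vee r_0)^{2d}}$ claimed in \eqref{eq:finite_energy}. The paper obtains a $u$-free cap by working with occupation times: the event $\widehat F_B^3$ in \eqref{eq:fin_energy_good} imposes $\ell_x^u \le r_0$ for every $x\in B^{8r_0}$, and since each excursion contributes at least one visit to $\partial_{\text{out}}\widehat B\subset B^{8r_0}$, this forces $|\widetilde{\mathcal C}^{\widehat B}_u|\le r_0\,|\partial_{\text{out}}\widehat B|\lesssim (r\vee r_0)^d$, with no $u$-factor. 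The dependence on $u$ is instead pushed into the probability estimate \eqref{e:F_B-proba}, where it is absorbed via a compound-Poisson tail bound on $\ell_x^u$.

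Second, you invoke ``a mild connectivity condition on $I$ inside $\widehat B\setminus B$, which holds with high probability by virtue of the connectivity of the interlacement cloud.'' This step is not backed by a usable estimate: $\mathcal I^u$ at a fixed level is not quantitatively locally connected inside a box. What is actually available---and what the paper exploits via the event $\widehat F_B^1$ in \eqref{eq:fin_energy_good} and \cite[Theorem~5.1]{DPR22}---is \emph{sprinkled} local connectivity: points of $\mathcal I^{u-\delta/2}$ inside the annulus $A(B,r_0)$ can be joined inside $\mathcal I^u\cap\widetilde A(B,r_0)$, with failure probability bounded as in \eqref{e:F_B-proba-boosted}. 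Note also that $\widehat F_B^1$ involves two levels and is therefore \emph{not} measurable relative to $(\widetilde{\mathcal C}^{\widehat B}_u, I)$; the paper resolves this tension by defining the measurable event $\widetilde F_B$ abstractly---in \eqref{def:mathscrE-concrete} it is the existence of compatible replacement excursions---and uses the inclusion $\widehat F_B\subset\widetilde F_B$ only for the probability estimate. Your measurable condition on $I$ could fill the same role, but the sprinkling step is exactly where the decay in \eqref{e:F_B-proba} originates, and it must show up somewhere.

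Finally, in the re-routing step you propose ``routing a single sprinkled excursion through all of $I\cap B$ and letting each remaining bridge follow a short detour in $I\setminus B$.'' This overlooks that a detoured $\le u-\delta$ bridge also drops the parts of its \emph{original} range lying in $I\setminus B$, and those dropped pieces must still be covered if the union is to equal $I$. The paper's Case~3 addresses this by choosing the connected set $C(\xi)$ swept by the sprinkled bridge to contain the entire original ranges of all detoured $\le u-\delta$ excursions, not just $I\cap B$; as written, your recipe would leave holes in $I$ uncovered.
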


The proof of Proposition~\ref{lem:finite_energy} is given below. 
A box $B=B(x,r)$ will later be called \textit{finite-energy good (with parameters $(u,\delta, r_0)$)} if an event $\widetilde{F}_B$ with the properties postulated by Proposition~\ref{lem:finite_energy} occurs; for concreteness, one can take the explicit event \eqref{def:mathscrE-concrete} constructed in the proof. In practice (see for instance Section~\ref{subsec:proof_bound_bad}), it can at times be useful to know that $\widetilde{F}_B$ is implied by another event, still satisfying \eqref{e:F_B-proba} but with `worse' measurability properties than \eqref{eq:mathscrEBmeasurable}, however with advantageous monotonicity features in terms of $u$, lending themselves to arguments involving sprinkling; see Remark~\ref{R:FE-implication-event} for more on this.

\begin{proof}[Proof of Proposition~\ref{lem:finite_energy}]
We start by defining the event $\widetilde{F}_B$. Recall the sequence $\widetilde{\mathcal 
C}_u^{B}$ of labeled start- and endpoints of the 
successive excursions between $B$ and $\partial_{{\rm out}} B$ by trajectories in the support of the interlacement process $\omega$ with label at most $u$. For $r, r_0 \geq 1$ and $x \in \Z^d$, let $B=B(x,r)$ and $\widehat{B}=B(x, r + 7{r_0})$ as in the statement of Proposition~\ref{lem:finite_energy}. Now consider the event
\begin{equation}\label{def:mathscrE-concrete}
\begin{split}
\widetilde{F}_B = \widetilde{F}_B^{u, \delta , r_0} \stackrel{{\rm def}.}{=}  \widetilde{F}_{B}'  \cap \widetilde{F}_B'' (\mathcal{I}^u \cap \widehat{B})
\end{split}
\end{equation}
where $u \ge \delta>0$ ,
$$
\widetilde{F}_B'= \big\{|\widetilde{\mathcal C}^{\widehat B}_u|  \le {r_0}\,|\partial_{{\rm out}} \widehat B|\big\}
$$
and for $I \subset \widehat{B}$,
\begin{equation*}
\begin{split}
 \widetilde{F}_{B}''(I) \stackrel{{\rm def}.}{=} \left\{\begin{array}{c}\text{$\widetilde{\mathcal C}^{\widehat B}_u= \widetilde{\mathcal{C}}$ for some $\widetilde{\mathcal{C}}$ such that, for any $\xi=(x,y,v) \in \widetilde{\mathcal{C}}$, there is an excursion}\\ \text{$w=w(\xi)$ starting in $x$ and ending in $y$, with $|w| \le (20(r \vee {r_0}))^d$, such that}\\ 
\text{$\bigcup_{\xi \in  \widetilde{\mathcal{C}}}\, {\rm range}(w(\xi)) =I$ and $\bigcup_{\xi =(x,y,v) \in  \widetilde{\mathcal{C}},\, v \le u - \delta}\, {\rm range}(w (\xi)) \cap B = \emptyset$}\end{array}\right\};
\end{split}
\end{equation*}
here with hopefully obvious notation, $w$ refers to an excursion between $\widehat{B}$ and $\partial_{\text{out}} \widehat{B}$ and $|w|$ denotes its (time-)length. We note that $(w(\xi))_{\xi \in \widetilde{\mathcal C}^{\widehat B}_u}$ naturally constitutes a sequence whose order is inherited from $\widetilde{\mathcal C}^{\widehat B}_u$ (which is arranged according to increasing label and order of appearance within a trajectory, cf.~\eqref{eq:RI_Z} for a similar procedure).

Plainly, \eqref{def:mathscrE-concrete} implies \eqref{eq:mathscrEBmeasurable}. We now show \eqref{eq:finite_energy}. As noted below \eqref{def:omega-BPhiBu}, $\widetilde{\mathcal 
C}_u^{B}$ is measurable with respect to the truncated process $\omega_{B}^{-}$, obtained from $\omega$ by removing these excursions except for their start- and endpoints (which correspond to elements in $\widetilde{\mathcal 
C}_u^{B}$ whenever the underlying trajectory has label at most $u$). In particular, this implies that both $\widetilde{F}_B'$ and $\widetilde{F}_B''(I)$ for fixed $I$ are $\sigma(\omega_{B}^{-})$-measurable. Let $\widetilde{ Z}^B_u= \widetilde{ Z}^B_u(\omega)$ denote the (finite) sequence of successive excursions between $B$ 
and $\partial_{{\rm out}} B$ along with their associated labels. One now observes that for any $I \subset \widehat{B}$ such that $\P[\mathcal{I}^u \cap \widehat{B} = I, \widetilde{F}_{B}'  \cap \widetilde{F}_B'' (I)] > 0$, as we now explain,
\begin{multline}
\label{eq:fe-compu}
\P\big[B\subset \mathcal V^{u - \delta}, \,  \mathcal 
I^u \cap \widehat B =I, \, \widetilde{F}_B  \big| \, \sigma ( \omega_{\widehat{B}}^{-} )\,\big] \\ = \P\big[\text{range}\big(\widetilde{ Z}^{\widehat{B}}_{u-\delta} \cap B \big) = \emptyset, \,  \text{range}\big(\widetilde{ Z}^{\widehat{B}}_{u} \big) =I \,  \big| \, \sigma ( \omega_{\widehat{B}}^{-} )\,\big]1_{\{\widetilde{F}_B' , \widetilde{F}_{B}''(I)\}}\\
\stackrel{(*)}{\geq} \P\big[ \widetilde{ Z}^{\widehat{B}}_u= (w(\xi))_{\xi \in \widetilde{\mathcal C}^{\widehat B}_u} \big| \, \sigma ( \omega_{\widehat{B}}^{-} )\,\big]1_{\{\widetilde{F}_B' , \widetilde{F}_{B}''(I)\}} \stackrel{(**)}{\geq} e^{-C(r \vee {r_0})^{2d}} 1_{\{\widetilde{F}_B, \,  \mathcal 
I^u \cap \widehat B =I\}},
\end{multline}
from which \eqref{eq:mathscrEBmeasurable} readily follows upon integrating on any $\sigma ( 
\omega_{\widehat{B}^-} )$-measurable event. The inequality $(*)$ is an inclusion of events, which follows by the defining properties of the event $\widetilde{F}_B''(I)$; in plain words, if the excursions $ \widetilde{ Z}^{\widehat{B}}_u$ match precisely the sequence $(w(\xi))_{\xi \in \widetilde{\mathcal C}^{\widehat B}_u}$, which is deterministic upon conditioning on $ \omega_{\widehat{B}}^{-}$ and whose existence is guaranteed on the event $\widetilde{F}_{B}''(I)$, then both $B\subset \mathcal V^{u - \delta}$ and $ \mathcal I^u \cap \widehat B =I$ occur. To obtain $(**)$, one simply notes that under $\P[\, \cdot \, | \sigma ( \omega_{\widehat{B}}^{-} )]$, the excursions constituting $\widetilde{ Z}^{\widehat{B}}_u$ are independent and each distributed as lazy random walk bridge conditioned to stay inside $\widehat{B}$ until reaching its endpoint. The probability that such a bridge follows a fixed path $w$ is bounded from below by $(4d)^{-|w|}$. The event $\widetilde{F}_B'$ ensures that there are at most $Cr_0(r_0+r)^{d-1}$ different bridges to consider, each of which follows a path of length at most $C(r\vee r_0)^d$ due to $\widetilde{F}_B''(I)$, and \eqref{eq:fe-compu} follows.

It remains to show \eqref{e:F_B-proba}. We seize the opportunity to show slightly more, namely that $\widetilde{F}_B$ in \eqref{def:mathscrE-concrete} is implied by another event $\widehat{F}_B$ satisfying \eqref{e:F_B-proba} with explicit monotonicity properties; see also Remark~\ref{R:FE-implication-event} below. For $B= B(x,r)$ as above and integer  ${r_0}\ge 1$, we let 
\begin{equation}\label{def:ABg}
\begin{split}
&A(B, {r_0}) =  B(x, r + 5{r_0}) \setminus B(x, r + 3{r_0}),\\
&\widetilde{A}(B, {r_0}) =  B(x, r + 6{r_0}) 
\setminus B(x, r + 2{r_0}).
\end{split}
\end{equation}
We now introduce for $u_1 \ge u_2 \ge u_3 > \delta_2 > \delta_1$ positive numbers, the
event  $\widehat{{F}}_{B}(u_1, u_2, u_3, \delta_1, \delta_2, r_0)$ under $\P$ as the 
intersection of the following three events (keeping the dependence on the underlying parameters implicit):
	\begin{equation}
	\begin{split}\label{eq:fin_energy_good}
				&\widehat{{F}}_{B}^1
\stackrel{{\rm def.}}{=} \bigcap_{x, y \in \mathcal I^{u_2 - \delta_1} \, \cap \, A(B, {r_0})}\big\{  \lr{}{\mathcal I^{u_2} \, \cap\,\widetilde{A}(B, {r_0})}{x}{y}  \big\}, \\ 				
&\widehat{{F}}_{B}^2 \stackrel{{\rm def.}}{=} \big\{ (\mathcal I^{u_3 - \delta_1} \setminus\mathcal I^{u_3 - \delta_2})\cap B^{r_0}\ne \emptyset\big\}, \mbox{ and}\\[0.3em]
&\widehat{{F}}_{B}^3
\stackrel{{\rm def.}}{=} \bigcap_{x \in B^{8{r_0}}} \{\ell_x^{u_1} \le {r_0}\},
		\end{split}
\end{equation}
where $B^s = \{x \in \Z^d : d(z , B) \le s\}$ and $(\ell_x^u: x \in \Z^d, u > 0)$ denote the occupation times of the interlacement, see \eqref{e:ell-u}. We now claim that one has the inclusion
\begin{equation}\label{eq:fin_en_inclusion}
\widehat{F}_{B} \stackrel{\text{def.}}{=} \widehat{F}_{B}(u, u, u, \delta/2, \delta, r_0) \subset \widetilde{F}_B
\end{equation}
with $\widetilde{F}_B$ as defined in \eqref{def:mathscrE-concrete}. Once \eqref{eq:fin_en_inclusion} is shown, \eqref{e:F_B-proba} follows using \cite[Theorem 5.1]{DPR22} to bound $\P[(\widehat{{F}}_{B}^1)^c]$, combining the formula \eqref{eq:I_u}, the fact that $\mathcal{I}^u \setminus \mathcal{I}^v$ has the same law as $\mathcal{I}^{u-v}$ for $u>v$ and the bound $\text{cap}(B^{r_0}) \geq c(r \vee r_0)^{d-2}$ to deal with $\P[(\widehat{{F}}_{B}^2)^c]$, and applying a union bound together with a straightforward large-deviation estimate to bound $\P[(\widehat{{F}}_{B}^3)^c]$, observing that $\ell_0^u$ follows a compound Poisson distribution.

We now turn to the proof of \eqref{eq:fin_en_inclusion}. Recalling that $\widehat{B}=B(x, r + 7{r_0})$, first notice that $$\widehat{F}_{B} 
\subset \widehat{{F}}_{B}^3 \subset \big\{|\widetilde{\mathcal C}^{\widehat B}_u|  \le {r_0}\,|\partial_{{\rm out}} \widehat B|\big\} = \widetilde{F}_B'$$ 
and thus we only need to show the inclusion 
\begin{equation}\label{eq:FGGBinclusion}
\widehat{F}_{B} \subset  \widetilde{F}_B'' (\mathcal{I}^u \cap \widehat{B}),
\end{equation}
which rests on a purely combinatorial argument. To this end for any given 
$\xi \in \widetilde{\mathcal C}^{\widehat B}_u$ on the event $\widehat{F}_{B} $, we propose a way to choose an 
excursion 
$w=w(\xi)$ between $\widehat{B}$ and $\partial_{{\rm out}} \widehat{B}$ by considering three mutually exclusive and exhaustive cases. We then proceed to check that the excursions $w(\xi)$ have the properties required for $\widetilde{F}_B'' (\mathcal{I}^u \cap \widehat{B})$ to occur. Let $W(\xi)$ denote the excursion in $\widetilde{ Z}_u^{\widehat{B}}$ corresponding to $\xi =(x,y,v) (\in \widetilde{\mathcal C}^{\widehat B}_u)$.
\begin{enumerate}[label = {\em Case~\arabic*.}]
\item 
${\rm range}(W(\xi)) \cap B^{{r_0}} = \emptyset$ or $v \in (u - \frac{\delta}2, u]$. In this case, we choose 
$w$ to be the concatenation of $x$,  $\tilde w$ and $y$ where $\tilde w$ is a minimum length traversal of some spanning tree 
of 
the connected set ${\rm range}(W(\xi))$. Clearly, $$|w| \le 2\,(\text{number of edges in $\widehat B$}) + 2 \le (20 (r \vee {r_0}))^d.$$
\item ${\rm range}(W(\xi)) \cap B^{{r_0}} \ne \emptyset$ and $v \le u - \delta$. In this case there is a path 
$\gamma \subset \mathcal I^u \cap (\widehat B \setminus B)$ connecting 
$x$ and $y$. For, otherwise, any path connecting $x$ and $y$ must intersect $B$. In particular, this is true of the 
excursion $W(\xi)$, which is in $\widetilde{ Z}^B_{u - \delta}$. However, since $\widehat{F}_{B} \subset \widehat{F}_{B}^1$ occurs, it follows from the local 
connectivity implied by the latter, see~\eqref{eq:fin_energy_good}, that there is a path $\gamma \subset \mathcal I^u \cap (\widehat B 
\setminus B)$ connecting 
$x$ and $y$, a contradiction. We now define 
$w$ in a similar way as in the previous case with 
$W(\xi)$ replaced by a path 
$\gamma \subset \mathcal I^u \cap (\widehat B \setminus B)$ joining $x$ and $y$  (which we just showed exists).  In particular, 
$|w| \le (20 (r \vee {r_0}))^d$ as above.

\item 
${\rm range}(W(\xi)) \cap B^{{r_0}} \ne \emptyset$ and $v \in  (u-\delta, u - \frac{\delta}{2}]$. As in 
Case~2, using the local connectivity ensured by $\widehat{F}_{B}^1$, we can find a connected set 
$C(\xi) \subset \mathcal I^u \cap \widehat B$ such that 
$${\rm range}(W(\xi)) \, \cup \, \bigcup_{\substack{\xi' = (v',x',y'):\, v' \le u-\delta,\\  W(\xi') \cap B^{{r_0}} \ne \emptyset}} {\rm range}(W(\xi')) \subset C(\xi).$$
Now define 
$w=w(\xi)$ similarly as in Case~2 (or Case~1) with 
$C(\xi)$ substituting for 
${\rm range}(W(\xi))$. The required bound on $|w|$ still holds.
\end{enumerate}
By our treatment of Case~2, it immediately follows that
\begin{equation*}
\bigcup_{\xi =(x,y,v): \, v \le u - \delta}\, {\rm range}(w (\xi)) \cap B = \emptyset.\end{equation*}
On the other hand, as $\widehat{F}_{B} \subset \widehat{{F}}_{B}^2$, see \eqref{eq:fin_energy_good}, there is at least one 
$\xi \in \widetilde{\mathcal C}^{\widehat B}_u$ falling under Case~3. Since the only excursions for which 
${\rm range}(W(\xi)) \not\subset {\rm range}(w(\xi))$ are those falling under Case~2, we deduce from our treatment of Case~3 that
$$\bigcup_{\xi}\,\, {\rm range}(w(\xi)) = \bigcup_{\xi}\,\, {\rm range}(W(\xi)) = \mathcal I^{u} \cap \widehat B.$$
In view of the definition of $\widetilde{F}_B'' (\mathcal{I}^u \cap \widehat{B})$, the last two displays together with the bound on $|w|$ implied in all three cases yield \eqref{eq:FGGBinclusion}. \qedhere
\end{proof}

\begin{remark}\label{R:FE-implication-event}
We record for later reference that the event $\widetilde{F}_B=\widetilde{F}_B^{u,\delta, r_0}$ entering Proposition~\ref{lem:finite_energy} 
(and later our bridging construction in defining a finite-energy good box, see e.g.~\eqref{def:mathscrE} and 
\eqref{e:K_b-tilde} below), satisfies $\widetilde{F}_B \supset \widehat{F}_{B}$, see \eqref{eq:fin_en_inclusion}, with 
$\widehat{F}_B$ defined as the intersection of the three events appearing in \eqref{eq:fin_energy_good} 
(with $u_1=u_2=u_3=u$, $\delta_1= \frac{\delta}{2}$, $\delta_2=\delta$) and
\begin{equation}
\label{e:F_B-proba-boosted} \P[\widehat{F}_B^c] \leq C(r \vee r_0)^d e^{-c (u \wedge \delta) r_0^c}.
\end{equation}
Whereas the event $\widetilde{F}_B$ is advantageous (notably due to \eqref{eq:mathscrEBmeasurable}) to perform the surgery arguments presented below, the event $\widehat{F}_B$ is cut out for renormalization-type arguments, for which monotonicity in terms of the various parameters involved is crucial.
\end{remark}

\section{Hierarchical bridges}\label{sec:bridging}
In this section, we construct a geometric object which we call a \textit{bridge}. It is a fractal set 
comprising boxes at all scales with several desirable features, expressed as conditions 
\eqref{B1}-\eqref{B4} below. Bridges will be used in Section~\ref{sec:supercritical} 
as an efficient highway to build connections between 
clusters. Here, `efficiency' refers to the cost of building a connection between two given clusters, which are arbitrary, and possibly very irregular. This cost will later need to be optimized under polynomial lower bounds on connectivity such as those appearing in Lemma~\ref{lem:twopointsbound}. The multiple scales involved in the construction of the bridge (rather than just using one scale) reflect this feature, which is characteristic of critical geometry.  We note in passing that the same bridge construction is also crucially at play in the companion article \cite{RI-I}, see Remark~\ref{R:bridge-other-paper} at the end of this section for more on this.

The existence of bridges with the desired properties \eqref{B1}-\eqref{B4} is the content of Proposition~\ref{prop:general_bridge} below, 
which is the main result of this section. Its proof follows a deterministic construction and applies to \textit{any} 
given pair of clusters, which the bridge connects (see condition \eqref{B2}). Part of the construction is somewhat 
reminiscent of {Whitney}-type covering lemmas; see~e.g.~\cite[Chap.~I, \S3.2, p.~15]{MR1232192}. 
The inherently hierarchical nature of a bridge, which will become apparent in the proof, see also Figure~\ref{F:bridge}, is ultimately 
owed to a delicate and limited decoupling, cf.~Proposition~\ref{prop:cond_decoup} or Proposition~\ref{lem:finite_energy}, 
which warrant polynomial safety gaps.

\medskip

The bridge construction will occur inside tube regions (including boxes as a special case), defined as follows. Let $z \in \Z^d$, $ 1\leq j \leq d$ a coordinate direction and ${N, L \ge 0}$ be integers. 
The ($\ell^{\infty}$-){\em tube} of length $N + 2L$ and (cross-sectional) radius $L$ in the $j$-th coordinate direction is the set
\begin{equation}
\label{eq:def-tube}
T_{L, N}^j(z)= \bigcup_{0 \le n \le N}\, B(z + ne_j, L). 
\end{equation}

Let $\mathcal C$ and $\mathcal D$ be two disjoint subsets of $\Z^d$ ($d \ge 1$) that both intersect the tube 
$T = T_{L, N}^j(z)$ in \eqref{eq:def-tube}. A {\em bridge} associated to the septuple 
$(\mathcal{C} , \mathcal{D}, s, s', m, \xi, T)$, where $s, s', m \in (0,\infty)$ and $\xi \in (0, 1)$, is a 
collection of boxes $\mathbb B = \bigcup_{1\leq j \leq J}  \mathbb{B}_j$, for some integer $J\geq 1$ (the index $j$ should be thought of increasing the resolution, i.e.~boxes in $\mathbb{B}_j$ get smaller as $j$ grows), each  
 contained in $B(T, s)$ and having the following properties. Letting $\mathbb{H}= \mathbb{B}_J$ (the `holes'), each box in $\mathbb B_{-J} = \mathbb B \setminus \mathbb H= \bigcup_{1\leq j \bm {<} J} \mathbb{B}_j$ is a subset of $T$ 
 containing two marked points on its boundary. Moreover, the following hold.
 \begin{align}
 & \label{B1} \tag{B.1} \text{\parbox{14cm}{ {\em (Separation).} For $ 1\leq j < J$ and $B = B(x,r) \in \mathbb B_j$, the box $\widetilde{ B 
 	}\stackrel{\text{def.}}{=} B(x,r + \lceil r^{\xi} \rceil )$ is disjoint from all boxes $\overline{B'}$ (recall $\overline{K}$ denotes the closure of $K \subset \Z^d$, see \S\ref{subsec:crossing_interface} for notation), where $ \textstyle B' \in (\bigcup_{j' \leq j} \mathbb B_{j'}) 
 	\setminus \{B\}$, as well as from $ \overline{\mathcal{C}} \cup \overline{\mathcal{D}}$. Moreover, if $B(x, r)$ and $B(x', r')$ are two distinct boxes in $\mathbb H$, then $B(x, r + s') \cap B(x', r' + s')= \emptyset$.}}\\[0.2cm]
& \label{B2} \tag{B.2}\text{\parbox{14cm}{ {\em (Connectivity).} For $1 \leq j < J$ and $B \in \mathbb B_j$, if $\pi_B$ is 
 	any path connecting the two marked vertices of $B$, then the union of all such paths along with the boxes in $\mathbb H$ connects $\mathcal C$ and $\mathcal D$, i.e.~$$\mathcal{C} \cup \Big( \bigcup_{B \in \mathbb B \setminus \mathbb H} \pi_B \Big) \cup \Big( \bigcup_{B \in \mathbb H} B \Big) \cup \mathcal{D}$$ is a connected set (here and routinely below we identify a path $\pi_B$ with its range).}}
\end{align}
\begin{align}
& \label{B3} \tag{B.3} \text{\parbox{14cm}{ {\em (Size).} If $B= B(x,r)$ satisfies $ B \in \mathbb B \setminus \mathbb H$ then $ r \geq s'$ whereas if $ B \in \mathbb H$ then $r \leq s$.}}\\[0.2cm]
	&  \label{B4}  \tag{B.4} \text{\parbox{14cm}{ {\em (Complexity).} \, $|\mathbb B| \leq (N/L + 8d \log eL)(\log eL)^m$ and $J \le m\log \log e^2 L$.}}
 \end{align}

The following proposition is the main result of this section. We implicitly assume that 
$\mathcal C$, $\mathcal D$ and $T$ are related as stated above, i.e.~$\mathcal C$, $\mathcal D \subset \Z^d$, $\mathcal C \cap \mathcal{D}= \emptyset$ and $\mathcal C \cap T\neq \emptyset$, $\mathcal D \cap T\neq \emptyset$.

 \begin{proposition}
\label{prop:general_bridge}
For all $\xi \in (\frac12, 1)$, there exists $m = m(\xi) > 1$ such that for all $L  \ge 
2s \geq C(\xi)$ and with $s' = s^{1/4}/200$, there is a bridge $\mathbb{B}$ associated to $(\mathcal{C}, \mathcal{D}, s, s', m, \xi, T)$.
\end{proposition}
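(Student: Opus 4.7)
The plan is to construct $\mathbb{B}$ by recursive refinement across geometrically decreasing scales. Set $r_0 = \lfloor L/10 \rfloor$ and $r_{j+1} = \lceil r_j^{\xi} \rceil$, and let $J$ be the first index with $r_J \le s$; since $\xi \in (\tfrac12, 1)$ and $L \ge 2s \ge C(\xi)$, one checks that $J \le m\log\log(e^2 L)$ for $m = m(\xi)$ large enough. First, I would fix any $p \in \mathcal{C} \cap T$ and $q \in \mathcal{D} \cap T$ and join them by a polygonal path $\Gamma \subset T$ of length at most $N + 4L$, built from two radial segments (from $p$ and $q$ to the axis of $T$) connected by the axial segment of the tube. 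I then form $\mathbb{B}_1$ by covering $\Gamma$, away from an $r_0^{\xi}$-neighborhood of $\{p,q\}$, with an ordered chain of boxes of radius $r_0$, spaced so that consecutive $r_0^{\xi}$-inflations are pairwise disjoint and disjoint from $\overline{\mathcal{C}} \cup \overline{\mathcal{D}}$. The two marked points of each level-$1$ box are the two intersections of $\Gamma$ with its boundary.

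Next, I would proceed inductively. Given $\mathbb{B}_1, \ldots, \mathbb{B}_j$ for $j < J$, every segment of $\Gamma$ still lying in the complement of $\bigcup_{i \le j} \bigcup_{B \in \mathbb{B}_i} B$ has length of order $r_j^{\xi}$ and lies in a region free of all earlier inflated boxes. For each such segment, place a single box of radius $r_{j+1}$ centered at (or slightly off) its midpoint, with marked points the two points where $\Gamma$ pierces its boundary; these boxes form $\mathbb{B}_{j+1}$. Terminate at $j = J$, setting $\mathbb{H} = \mathbb{B}_J$. By construction, $\bigcup_{B \in \mathbb{B}} B$ covers $\Gamma$ except at $p,q$, so condition \eqref{B2} follows: concatenating any paths inside the non-hole boxes (connecting their marked points) with the hole boxes yields a connected superset of $\Gamma$, hence a connected set meeting both $\mathcal{C}$ and $\mathcal{D}$.

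Conditions \eqref{B1}, \eqref{B3}, \eqref{B4} then reduce to arithmetic on the $r_j$. For \eqref{B3}, $r_J \le s$ by choice of $J$ and $r_{J-1} > s$ by minimality, so $r_{J-1} > s \ge s^{1/4}/200 = s'$ once $C(\xi)$ is large. For \eqref{B1}, the inflation $\widetilde{B}$ of a level-$(j+1)$ box has radius $r_{j+1} + r_{j+2} \ll r_j^{\xi}$, so it fits in the gap left by level $j$, while any two holes are separated along $\Gamma$ by at least one intermediate box of radius $\ge r_{J-1} > s \ge s'$, giving the hole-pair separation. Finally, $|\mathbb{B}_1| \le N/r_0 + O(1) \le 10 N/L + O(1)$ and each subsequent level adds at most one box per existing gap, so $|\mathbb{B}_{j+1}| \le |\mathbb{B}_j| + O(1)$; combined with $J \le m\log\log(e^2 L)$ this gives $|\mathbb{B}| \le (N/L + O(\log L))(\log eL)^m$, comfortably inside \eqref{B4}.

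The main obstacle I anticipate is the precise verification of \eqref{B1} simultaneously across all scales, since each new box must fit inside an existing gap left by previous-scale inflations yet still leave room for all its own descendants. The key geometric input is the inequality $r_{j+1} + r_{j+2} \ll r_j^{\xi}$, which is built into the choice $r_{j+1} = \lceil r_j^{\xi}\rceil$ once $L$ is large. A secondary delicate point is avoiding collisions between boxes near the two radial "tails" of $\Gamma$ where it meets $\mathcal{C}$ and $\mathcal{D}$: one must choose the tail directions transverse to the axis and ensure each level's refinement near $p,q$ leaves room for the next. This is also where the hypothesis $\xi > \tfrac12$ and the large safety factor $200$ in the definition of $s'$ play their role: $\xi > \tfrac12$ gives $1/\xi < 2$ and hence $r_{J-1} \ge r_J^{1/\xi}$ stays well above $s' = s^{1/4}/200$ after the last refinement, while $1/\xi < 2$ also prevents the recursion from collapsing non-hole boxes below scale $s'$ before the hole scale is reached.
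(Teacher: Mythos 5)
Your scheme has a genuine gap in the scale recursion. With $r_{j+1}=\lceil r_j^{\xi}\rceil$, the inflation $\widetilde B$ of a level-$(j+1)$ box has radius $r_{j+1}+\lceil r_{j+1}^{\xi}\rceil \approx r_j^{\xi}+r_j^{\xi^2}\geq r_j^{\xi}$, which is \emph{not} small compared with $r_j^{\xi}$: your stated key inequality $r_{j+1}+r_{j+2}\ll r_j^{\xi}$ is false, and a single box of radius $r_{j+1}$ placed in a gap of size $\sim r_j^{\xi}$ either fills the gap (if the gap is $\Theta(r_j^{\xi})$) or leaves a residual gap that is still $\Theta(r_j^{\xi})$ rather than $\Theta(r_{j+1}^{\xi})$ (if the gap is $\gg r_j^{\xi}$). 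Consequently the inductive hypothesis that level-$j$ gaps have size $\sim r_j^{\xi}$ is already broken at level~$2$. If instead one uses constant-ratio shrinking, the separation holds but $J\sim\log L$ levels are needed, violating \eqref{B4}. The point you are missing is that each gap must be filled by \emph{many} boxes of radius $\sim(\text{gap})/4$, leaving sub-gaps of size $\sim((\text{gap})/4)^{\xi}$. That is precisely what the paper's one-dimensional Claim~\ref{Claim-1d-bridge} delivers: it is the multi-box refinement, not a single shrinking box, that makes the gap exponent drop by a factor $\xi$ each round while keeping $J=O(\log\log L)$.

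A second independent gap is the absence of a reduction step. You pick arbitrary $p\in\mathcal C\cap T$, $q\in\mathcal D\cap T$ and a polygonal path $\Gamma$ between them, then require the level-$1$ inflated boxes to be disjoint from $\overline{\mathcal C}\cup\overline{\mathcal D}$. But $\mathcal C$ and $\mathcal D$ are arbitrary subsets that can occupy large portions of the interior of $T$ (e.g.~$\mathcal C$ could run along the axis of $T$), in which case no such $\Gamma$ exists, and the distance $d(\mathcal C,\mathcal D)$ may also be $\ll L$ so that boxes of radius $r_0\sim L/10$ cannot fit between them at all. The paper handles this via Claim~\ref{Claim:T'}, which first locates a sub-tube $T'\subset T$ with $\mathcal C$, $\mathcal D$ meeting only its two opposite faces and its interior free of $\mathcal C\cup\mathcal D$ (or else $T'$ fits inside a single $s$-hole). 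Some such reduction is indispensable and your proposal does not supply it.
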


 \begin{proof}
By invariance under translations and lattice rotations, we may assume that $T=T_{L, 
N}^1(0)$. As we first explain, it is sufficient to work in the continuum, which is a matter of convenience.
 We thus consider $T$ as well as the boxes appearing in \eqref{B1}-\eqref{B4} as 
(closed) subsets of $\R^d$. With regards to giving sense to \eqref{B2}, we identify $\pi_B$ and $\mathcal{C}, \mathcal{D}$ with the connected subsets of $\R^d$ obtained by adding line segments between all neighboring pairs of points. With these conventions, \eqref{B1}-\eqref{B4} are naturally declared in $\mathbb{R}^d$. We will construct a bridge $\mathbb B$ satisfying the conclusions of Proposition~\ref{prop:general_bridge} in this continuous setup, but with
$s'= \frac{s^{1/4} }{100}$ instead (note that the conditions \eqref{B1} and \eqref{B3} become more stringent as $s'$ increases) and 
  $2\lceil r^{\xi} \rceil$ in place of $\lceil r^{\xi} \rceil$ as well as $\mathcal C \cup \mathcal D$ in place of $\overline{\mathcal C} \cup \overline{\mathcal D}$ in \eqref{B1}. The  discrete case then follows by taking lattice approximations of the 
corresponding boxes, thereby only increasing the
radius of the boxes in $\mathbb B$, in order for \eqref{B2} to continue to hold. Property 
\eqref{B4} is unaffected by this. The slightly stronger continuous result (with larger value of $s'$ and radius for $\widetilde{B}$) ensures that the lattice effects resulting from this approximation, which may cause the radii of any box to increase additively by a bounded amount, are duly accounted for, i.e.~the requirements \eqref{B1}, \eqref{B3} for the resulting discrete bridge hold whenever $s \geq C(\xi)$ (possibly replacing $s$ by $s+C'$ in the process of passing to the discrete framework).

\medskip
We now work within the above continuous setup and start with a reduction step (\eqref{eq:bridge-pf1} below).
We denote by $\partial_{{\, \rm L}} T$, resp.~$\partial_{{\, \rm R}} T$ the left, resp.~right face of $T$, i.e.~if $T$ is the (closed) continuous tube corresponding to $T_{L, 
N}^1(0)$ in \eqref{eq:def-tube}, then $\partial_{{\, \rm L}} T= \{-L\} \times [-L, L]^{d-1}$ and $ \partial_{{\, \rm R}} T=\{N +L \} \times [-L, L]^{d-1}$. The case of generic $T_{L, N}^j(z)$ is analogous. 
For any $S \subset \R^d$, we use $\mathring{S}$ and 
$\overline S$ for the topological interior and 
closure of $S$, respectively. We claim it is enough to show for $L \ge  s \ge C (\xi)$ that there exists a bridge
associated to $(\mathcal C, \mathcal D, s, s', m(\xi), \xi, 
 T)$ with $s' = \frac{{s}^{1/2}}{ 100}$ under the additional assumption that
 \begin{equation}\label{eq:bridge-pf1}
 \text{$(\mathcal C \cup \mathcal D)  \cap \mathring{T} = \emptyset$, 
$\mathcal C \cap \partial_{{\, \rm L}} T \ne \emptyset$ and $\mathcal D \cap \partial_{{\, \rm R}} T \neq \emptyset$.} 
\end{equation}
We first explain how to derive the general case from \eqref{eq:bridge-pf1}.

\begin{claim}[$L \geq 2s, \, s \geq C(\xi)$] \label{Claim:T'}
There exists a tube $T' = T_{L', N'}^{j'}(z') \subset T$ such that \eqref{eq:bridge-pf1} holds with $T'$ in place of $T$ and in addition,
\begin{equation}\label{eq:bridge-pf2}
\text{$L' \ge s \, (\ge C(\xi))$ or $N' + 2L' < 2 s$.}
\end{equation}
\end{claim} 

Once such at $T'$ is at our disposal, we conclude as follows. In case $L' \geq s$ we simply define $\mathbb{B}$ as the bridge associated to $(\mathcal C, \mathcal D, s, s', m, \xi, T')$, which exists by assumption as $T'$ satisfies \eqref{eq:bridge-pf1}. One then simply notes that  $\mathbb B$ thus satisfies \eqref{B1}-\eqref{B4} for $(\mathcal C, \mathcal D, s, s', m, \xi, T)$ and the claim of Proposition~\ref{prop:general_bridge} follows. On the 
 other hand if $L' < s$, then by \eqref{eq:bridge-pf2} we have that $N' + 2L' \le 2 s$, hence we can simply set $J = 1$ and  $ \mathbb B= \mathbb H = \{B(x, s)\}$ where $x$ is the center of the tube $T'$.

\begin{proof}[Proof of Claim~\ref{Claim:T'}]
Let $y_{\mathcal C} \in \mathcal C \cap T$ and $y_{\mathcal D} \in \mathcal D \cap T$ be such that $$|y_{\mathcal C}- y_{\mathcal D}| = d\big(\mathcal C \cap 
 	T, \mathcal D \cap T\big) \stackrel{\text{def.}}{=} D$$ 
(recall that $d(\cdot,\cdot)$ denotes the $\ell^{\infty}$-distance between sets). Thus, we have  that $y_{\mathcal D}^{j'} = y_{\mathcal C}^{j'} + D$ for 
 	some $ 1\leq j' \leq d$ (this defines $j'$ entering the definition of $T'$). We now distinguish two cases. If $j'=j$ (the direction $j$ refers to $T=T_{L, N}^j(z)$) and $D \geq 2L$, we define $T'$ as the tube with $L'=L$ whose
boundaries $\partial_{{\, \rm L}} T'$ and $\partial_{{\, \rm R}} T'$ contain $y_{\mathcal C} $ and $y_{\mathcal D} $, respectively. Note that this can always be accomodated (i.e.~$N' \geq0$) since $D \geq 2L$. Clearly, $T' \subset T$ and \eqref{eq:bridge-pf1} holds for $T=T'$ by construction, and the first condition in \eqref{eq:bridge-pf2} is in force (in fact $L'=L \geq 2s$).

The remaining case is that either i) $j' \neq j$ or ii) $j' = j$ and $D < 2L$. Since the cross-section of $T$ (orthogonal to $e_j$, thus corresponding to directions $j' \neq j$) is $2L$, one has $D \leq 2L$ regardless of whether i) or ii) occurs. 
One thus finds a box $B$ of radius $\frac D2$ contained in $T$ having $y_{\mathcal C}$ and $y_{\mathcal D}$ on opposite faces: for instance, $B$ can be obtained by considering the rectangular cuboid having $y_{\mathcal C}$ and $y_{\mathcal D}$ at opposite corners, extending its short directions (all except $j'$) to obtain a box of desired radius (but not necessarily in $T$) and rigidly shifting it one by one in all but the $j$'th direction to obtain $B \subset T$ (using that $D \leq 2L$). We set $T'=B$, whence $L' = \frac{D}{2}$ and $N' = 0$. Again, \eqref{eq:bridge-pf1} is plain and in case $L'  < s$, we have that $N'+2L' = 2L' < 2s$, whence \eqref{eq:bridge-pf2}.
\end{proof}

 In the remainder of the proof, we confine ourselves to the special case where $T(=T_{L, 
N}^1(0))$ satisfies \eqref{eq:bridge-pf1}. Thus, let $y_{\mathcal C} \in \mathcal C \cap \partial_{{\, \rm L}}  T$ and $y_{\mathcal D} \in \mathcal D \cap \partial_{{\, \rm R}} T$. For $n \in \Z$, consider the coarse-grained lattice ${\Lambda}_n 
 = L_n\,\Z^d$ with 
 \begin{equation}\label{eq:bridge-L_n}
 L_n = 2^{-n} L, \quad n \geq 0,
 \end{equation} 
 and let $B_{n,x} = x + [0, L_n)^d$ for any $x \in \Z^d$ 
 (not to be confused with $B(x, r)$). Observe that the families of (semi-closed) boxes $\mathbf B_n = 
 \{B_{n,x} : x \in {\Lambda}_n\}$  are naturally nested and each forms a tiling of $\R^d$.  We will 
 refer to a box in $\mathbf B_n$ as an {\em $n$-box}. We denote by $\mathbf B= \bigcup_{n \ge 0} \mathbf B_n$ the collection of all $n$-boxes as $n$ varies.  Two boxes in $ B, B' \in \mathbf B$ are called {\em adjacent} if $B \cap B' \notin\{ B, B'\}$ and they are `next' to each other, i.e.~$\overline{B} \cap \overline{B'}$ is a face of one of $\overline{B}$ and $\overline{B}'$. It readily follows that ,given any two boxes in $\mathbf B$, either one of them contains  the other, or they are adjacent, or they are disjoint but non-adjacent. 
A sequence $\gamma: \{1, \ldots, \ell\} 
 \to \mathbf B$ will be called {\em coarse path} if the boxes $\gamma(i)$ and $\gamma(i+1)$ are adjacent for all $i$ 
 and $\ell=\ell(\gamma)$ is called the length of $\gamma$. The coarse path $\gamma$ is {\em simple} if all its boxes 
 are disjoint. We will use the following result.
 \begin{lemma}\label{lem:coarse_exist}
For $L \geq s \geq C$, there exists a simple coarse path $\gamma $ such that, with $\ell=\ell(\gamma)$,	\begin{enumerate}
 		\item[i)] The box $\gamma(1)$ (resp.~$\gamma(\ell)$) 
 		is adjacent to a box of same length containing $y_{\mathcal C}$ (resp.~$y_{\mathcal D}$). 
 		\item[ii)] For all $1 \leq i \leq \ell $, $\gamma(i) \subset T$, $\gamma(i) \in \mathbf{B}_n$ for some 
 		$0 \le n \le \lceil \log_2 (16 L/ s) \rceil$ and if $|i - i'| \le 1$, then $\gamma(i') \in \mathbf{B}_m$ for some $m \in \{n -1, n, n+1\}$.
		\item[iii)] 
 		$\gamma(1)$ and $\gamma(\ell)$ are 
 		$\lceil \log_2 (16L/ s) \rceil$-boxes (so~their side lengths are at most 
 		$\frac s {16}$, see \eqref{eq:bridge-L_n}).
 		\item[iv)] $2 \le \ell= \ell(\gamma) \le \frac N L + 5 d \log_2 \frac{64L}{s}$. 
 		 	\end{enumerate}
 \end{lemma}
 
 The proof reminiscent of the bridge construction of \cite[Lemma 2.5]{DCGRS20}, but simpler.
 
 \begin{proof}[Proof of Lemma~\ref{lem:coarse_exist}]
 	We will construct $\gamma$ in a hierearchical manner through progressive refinements (recall that increasing $n$ corresponds to an increasing resolution in \eqref{eq:bridge-L_n}). The starting point is the `very' coarse path $\gamma_0$, defined as a shortest-length path with values in $\mathbf{B}_0$ connecting the unique $0$-boxes containing $y_{\mathcal C}$ 
 	and $y_{\mathcal D}$. Since $T= T_{L, N}^{1}(0)$ and on account of \eqref{eq:def-tube}, we can choose $\gamma_0$ in such a way so that all the boxes in $\gamma_0$ except, possibly, the 
 	initial and terminal ones $\gamma(1)$ and $\gamma(\ell(\gamma_0))$, lie inside $T$. It is clear from this construction that $3 \le \ell(\gamma_0) \le \frac N  L  +  4d$.  
	
	Now suppose that at the end of stage $k \ge 0$, we have obtained a simple coarse path $\gamma_k: \{0,\dots , \ell_k\} \to \bigcup_{0 \leq n \leq k} \mathbf{B}_n$, so $\ell_k = \ell(\gamma_k)$, having the following properties:
\begin{align}
&\label{eq:gammak-1} \text{$\gamma_k(1) \ni  y_{\mathcal C}$,  $\gamma_k(\ell_k) \ni  y_{\mathcal D}$ and $\gamma_k(i) \in \mathbf{B}_k$ for $i \in \{1,2,3, \ell_k-2, \ell_k-1, \ell_k\}$.}\\
&\label{eq:gammak-2} \text{$\gamma_k(i) \subset T$ for $1 < i < \ell_k$, and with $\gamma_k(i) \in \mathbf{B}_{n_i}$ one has $\textstyle  \frac{n_{i+1}}{n_i} \in \{ \frac12, 1, 2\}$ for $1 \leq i < \ell_k$.} \\
&\label{eq:gammak-3} \text{$\textstyle3 \le \ell_k \le \frac N L + 4(k+1)d$ and $4 \leq \ell_k$, $k \geq 1$.}
\end{align}  
It is plain that $\gamma_0$ satisfies \eqref{eq:gammak-1}-\eqref{eq:gammak-3} for $k=0$. We will momentarily construct $\gamma_{k + 1}$ inductively from $\gamma_k$ to deduce the existence of a $\gamma_k$ with the above features for all $k \ge 0$. Once the existence of $\gamma_k$ for all $k \ge 0$ is established, we simply define $\gamma$ to be the coarse path inside $T$ obtained from $\gamma_{\lceil \log_2 (16L/ s) \rceil}$ (recall that $L \ge s$) after removing the first and final boxes (containing $y_{\mathcal C}$ and $y_{\mathcal D}$, respectively, due to \eqref{eq:gammak-1}), should they lie outside $T$. It is clear from this construction and on account of \eqref{eq:gammak-2} that the resulting path $\gamma$ satisfies \textit{i)}. Item \textit{ii)} also follows from \eqref{eq:gammak-2} and the choice of depth $k= \lceil \log_2 (16L/ s) \rceil$, by which \eqref{eq:gammak-3} immediately yields the upper bound in \textit{iv)}. The lower bound $\ell \geq 2$ follows from the fact that $T$ has diameter at least $2L \geq 2s$ whereas $\gamma_k(1)$ and $\gamma_k(\ell)$ have side length at most $\frac s{16}$, 
whence $\ell(\gamma_{k}) \geq 4$ and thus $\ell(\gamma) \ge 2$. Finally \textit{iii)} follows directly from \eqref{eq:gammak-1}.
 	
 	\medskip
 	
 	We now prove the induction step. We construct $\gamma_{k+1}$ from $\gamma_k$ by retaining most of it 
 	while refining the boxes at its both ends.
 	Since $L_k$ divides $L$, see \eqref{eq:bridge-L_n}, it follows from the definition of the coarse-grained lattice $ 
 	\L_k$ and the tube $T$ that if a $k$-box $B_{k,x}$  intersects $\partial_{{\, \rm R}}T$ and 
 	has a neighboring $k$-box {inside} $T$, then this
 	neighbor is unique and in fact equals $B_{k, x - L_k e_1}$. The previous observation applies to $B_{k,x}=\gamma_k(\ell_k)$ by \eqref{eq:gammak-1} and since $y_{\mathcal D} \in \mathcal D \cap \partial_{{\, \rm R}} T$.
 Now two cases might occur based on the value of 
 	$x^1$, the first coordinate of the base point $x$ for the box $\gamma_k(\ell_k)$. 
	
	Firstly, we may have $x^1 < N+L-L_{k+1}$, in which 
 	case the left half of $B_{k,x}$, i.e.~the set $x + [0, L_{k+1}) \times [0, L_k)^{d-1}$ is contained in $T$. 
 	We then construct a simple coarse path consisting of three $(k+1)$-boxes $B_{{\rm R}, -2}, B_{{\rm 
 			R}, -1}$ and $B_{{\rm R}}$, each contained inside $B_{k,x}$, that connects $\partial_{{\, \rm L}}B_{k,x} = \partial_{{\, \rm R}} (\gamma_k(\ell_k-1)$) to 
 	$B_{{\rm R}} \subset B_{k,x}$, the unique $(k+1)$-box containing $y_{\mathcal C}$. This actually only requires two boxes $B_{{\rm 
 			R}, -1}$ and $B_{{\rm R}}$ but we add a third box $B_{{\rm R}, -2}$ neighboring both $B_{{\rm R}, -1}$ and $ \gamma_k(\ell_k-1)$ with a view towards \eqref{eq:gammak-1}. 
	
	 On the other hand, if $x^1 \ge N + L - 
 	L_{k+1}$, $B_{{\rm R}}$ defined as above is adjacent to $\partial_{{\, \rm L}}B_{k,x}= \partial_{{\, \rm R}}  
 	(\gamma_k(\ell_k-1))$ and hence we can choose a coarse path consisting of two $(k+1)$-boxes 
 	$B_{{\rm R}, -2}$ and $B_{{\rm R}, -1}$ inside $\gamma_k(\ell_k-1)$ joining the face of 
 	$\gamma_k(\ell_k-1)$ adjacent to $\gamma_k(\ell_k-2)$ and $B_{{\rm R}}$.  By the same reasoning, we find three $(k+1)$-boxes $B_{{\rm L}}$, $B_{{\rm L}, 1}$ and $B_{{\rm L}, 2}$ all contained in $\gamma_k(1)$, where $B_{{\rm L}}$ is the unique $(k+1)$-box containing $y_{\mathcal C}$. We now define the path 
 	$\gamma_{k+1}$ by considering several cases.
 	 	
		\medskip
 	{Case~(a): $B_{{\rm R}, -2}, B_{{\rm R}, -1} \subset \gamma_k(\ell_k)$.} In this 
 	case we set $\gamma_{k+1}(1) = B_{{\rm L}}, \gamma_{k+1}(2) = B_{{\rm L}, 1}$ and $\gamma_{k+1}(3) = B_{{\rm L}, 2}$; $\gamma_{k+1}(\ell_k + 4) = B_{{\rm R}}, \gamma_{k+1}(\ell_k + 3) = B_{{\rm R}, -1}$ and $\gamma_{k+1}(\ell_k + 2) = B_{{\rm R}, -2}$; and $ 	\gamma_{k+1}(i) = \gamma_{k}(i - 2)$ for all $4 \le i \le \ell_k + 1$.
 	
		\medskip
 	
 	{Case~(b): $B_{{\rm R}, -2}, B_{{\rm R}, -1} \subset \gamma_k(\ell_k-1)$, $\ell_k > 3$.} In 
 	this case we set $\gamma_{k+1}(1) = B_{{\rm L}}, \gamma_{k+1}(2) = B_{{\rm L}, 1}$ and $\gamma_{k+1}(3) 
 	= B_{{\rm L}, 2}$; $\gamma_{k+1}(\ell_k + 1) = B_{{\rm R}}, \gamma_{k+1}(\ell_k) = B_{{\rm R}, -1}$ and $\gamma_{k+1}(\ell_k - 1) = B_{{\rm R}, -2}$; and $ 	\gamma_{k+1}(i) = \gamma_{k}(i - 2)$ for all $4 \le i \le \ell_k$.
 	
		\medskip
 	
 	{Case~(c): $B_{{\rm R}, -2}, B_{{\rm R}, -1} \subset \gamma_k(\ell_k-1)$, $\ell_k = 3$.} In this case, $B_{{\rm R}, -2} \subset \gamma_k(1)$ and hence we can connect 
 	$\partial B_{{\rm L}, 2}$ 
 	with $B_{{\rm R}, -2}$ using a coarse path in $\gamma_k(1)$ of size at most $d + 1$. We then 
 	set $\gamma_{k+1}$ to be a simple coarse path in $\mathbf B_{k+1}$ formed by these $(k+1)$-boxes along with
	$B_{{\rm L}}, B_{{\rm L}, 1},B_{{\rm L}, 2}$ and $B_{{\rm R}, -2}, B_{{\rm R}, -1}, B_{{\rm R}}$. We stress that this includes the possibility of having overlaps among $B_{{\rm L}, 1},B_{{\rm L}, 2}$ and $B_{{\rm R}, -2}, B_{{\rm R}, -1}$, in which case the additional piece of path in $\gamma_k(1)$ is simply absent.
 	
		\medskip
 	
 	It follows readily from the above construction that the path $\gamma_{k+1}$ hereby defined satisfies properties
\eqref{eq:gammak-1} and \eqref{eq:gammak-2} with $k+1$ in place of $k$. As to \eqref{eq:gammak-3}, one sees plainly that $\ell_{k+1} = \ell_{k}+4$ in Case~(a), $\ell_{k+1} = \ell_{k}+1$ in Case~(b) and $\ell_{k+1} \leq d+1+6$ in Case~(c). The lower bound $\ell_{k+1} \geq 4$ follows from  $\ell_k \geq 3$ (which holds for all $k \geq 0$)  in Cases~(a) and~(b). For Case~(c) one observes that the `worst-case' scenario is $B_{{\rm L}, 1}=B_{{\rm R}, -2}$, $B_{{\rm L}, 2}=B_{{\rm R}, -1}$, in which case $\gamma_{k+1}$ is composed of the four boxes $B_{{\rm L}}, B_{{\rm L},1}, B_{{\rm L},1} $, whence $\ell_{k+1} \geq 4$.
 \end{proof}

 We are just one step away from proving Proposition~\ref{prop:general_bridge}. This step involves the proof 
 of the proposition in a very special case.
 
  \begin{lemma}[$\xi \in (\frac12, 1)$]\label{lem:bridging_super_specialized}
Let  $\mathcal C = \partial_{{\, \rm L}}T$ and $\mathcal D = \partial_{{\, \rm R}}T$. For some $m=m(\xi)$ and all $L \ge s \ge C(\xi)$, there is a bridge associated to $(\mathcal C, \mathcal D, s, s', m, \xi, T)$ with $s' = \frac{ \sqrt{s}}{ 100}$. Furthermore, all the boxes in the bridge can be chosen to lie inside $T$.
 \end{lemma}
 Assuming Lemma~\ref{lem:bridging_super_specialized}, we first finish the proof of Proposition~\ref{prop:general_bridge}.
 	Let $\gamma$ be the coarse path supplied by 
 	Lemma~\ref{lem:coarse_exist}. In the sequel, $B_{{\rm L}}$ and $B_{{\rm R}}$ refer to the boxes adjacent to $\gamma(1)$ and $\gamma(\ell)$ containing $y_{\mathcal C}$ and $y_{\mathcal D}$, respectively; cf.~item \textit{i)} above. In order to ensure the separation property \eqref{B1}, we now slightly reduce the size of boxes comprising $\gamma$ to obtain a path $\gamma'$, as follows. Let $n_i$ be such that $\gamma(i)	\in \mathbf{B}_{n_i}$, by which $\gamma(i)$ has side length $L_{n_i}$. For any 
 		$1< j < \ell= \ell(\gamma)$ we simply let ${\gamma}'(j) = B(x, r - 4r^{\xi})$, where $B(x, r) = \overline{\gamma(j)}$ (so $x$ refers to the center of $\gamma(j)$ and $r= {L_{n_j}}/{2}$). 
		
		For $\gamma'(1)$,~resp.~$\gamma'(\ell)$, in addition to ensuring a small gap to other boxes in $\gamma'$, we also aim to preserve adjacency to $B_{{\rm L}}$, resp.~$B_{{\rm R}}$. To this end we proceed as follows. Let $r=\frac{L_{n_1}}{2}$ denote the radius of $\gamma(1)$. If $d({\gamma}(1),  \partial_{{\, \rm L}}T) \ge r$, we let ${\gamma}'(1) \subset {\gamma}(1)$ denote the (unique) box with radius $r - 4 r^{\xi}$ that is 
at Euclidean distance $4 r^{\xi}$ from each face of ${\gamma}(1)$ except for two, one which is shared with (i.e.~a subset of) $B_{{\rm L}}$ and the other which is at distance $8r^{\xi}$. Otherwise, we let ${\gamma}'(1) = B(x, r - 4r^{\xi})$ where $B(x, r) = \overline{\gamma(1)}$ and, with a slight abuse of notation, reset $B_{{\rm L}}$ to be any box of radius $r$ obtained by  shifting $B_{{\rm L}}$ in such a way that it still contains $y_{\mathcal C}$ as well as one face of ${\gamma}'(1)$. This is possible because $r + 4r^{\xi} < 2r $ for all $r \ge C(\xi)$. We proceed analogously with ${\gamma}(\ell)$, $y_{\mathcal D}$ and $B_{{\rm R}}$. 

Overall, it follows by construction of $\gamma'$, using the properties \textit{i)} and \textit{ii)} of $\gamma$, that
 	\begin{equation}\label{eq:bridge_gamma'-diest}
 		d\Big({\gamma}'(j), \bigcup_{1 \, \le \, j' \ne j \, \le \, \ell} {\gamma}'(j') \Big) \wedge d({\gamma}'(j), {\mathcal C} \cup {\mathcal D} \color{black})  \ge 4 \, {\rm rad}({\gamma}'(j))^{\xi},
 	\end{equation}
 and ${\gamma}'(1)$ (resp.~${\gamma}'(\ell)$) is adjacent to $B_{{\rm L}}$ (resp.~$B_{{\rm R}}$), which contains $y_{\mathcal C}$ (resp.~$y_{\mathcal D}$). We then set 
\begin{equation}\label{eq:bridge-fifinal-1}
\mathbb B_1 = \{{\gamma}'(j) : 1 \le j \le \ell\}. 
\end{equation}
We will describe the marked points for the boxes in $\mathbb B_1$ shortly (following \eqref{eq:bridge-fifinal-3}). As item \textit{ii)} in Lemma~\ref{lem:coarse_exist} ensures that the length of any box in ${\gamma}$ is at least $\frac{s}{32}$ and the ratio of the side lengths of ${\gamma}(j)$ and ${\gamma}(j+1)$ lies in $\{1/2, 1, 2\}$, we can fit for each $1 \le j < \ell$ a tube $T_j$ of the form \eqref{eq:def-tube} (with $L= {\rm rad}({\gamma}(j))^{\xi}$) connecting the two faces of ${\gamma}'(j)$ and ${\gamma}'(j+1)$ facing one another. For definiteness, we pick $T_j$ aligned around the axis emanating from the center of the smaller of the two faces. With a view towards applying Lemma~\ref{lem:bridging_super_specialized}, the length of $T_j$ is chosen so that the opposite faces exactly contain $\partial_{\rm L} T_j$ and $\partial_{\rm R} T_j$ as subsets.
It follows that the length (corresponding to $N+2L$ in \eqref{eq:def-tube}) of $T_j$ is at most $8\,{\rm rad}({\gamma}(j))^{\xi}$ provided $s \ge C(\xi)$. Moreover, it can be ensured that
\begin{equation}\label{eq:bridge-fifinal-2}
\text{$B(T_j, 8\,{\rm rad}({\gamma}(j))^{\xi})$, $1 \le j < \ell$, are disjoint and do not 
 	intersect $B( B_{{\rm L}} \cup B_{{\rm R}}, \textstyle \frac1{100}(\frac{s}{64})^{\xi/2} )$.}
\end{equation}
(for the latter one uses that $\{B_{{\rm L}}, B_{{\rm R}}\} $ each neighbor a box in $\mathbb{B}_1$).
Applying Lemma~\ref{lem:bridging_super_specialized} to each $T_j$ yields a bridge $\mathbb B^j = \bigcup_{1 \le i \le J_j} \mathbb 
 	B_{i}^j$ associated to 
\begin{equation}\label{eq:bridge-T-j} \textstyle (\partial_{\rm L} T_{j}, \partial_{\rm R} T_{j}, (\frac s{64})^{\xi}, \frac1{100}(\frac{s}{64})^{\xi/2}, m, \xi, T_{j})
\end{equation} for 
 	each $1 \le j < \ell$, provided $ s \geq C(\xi)$ and with $m=m(\xi)$ as supplied by Lemma~\ref{lem:bridging_super_specialized}. For each $1 \le i < \max_{j} J_j \stackrel{\text{def.}}{=} J - 1$, we then set
\begin{equation}\label{eq:bridge-fifinal-3}
\mathbb B_{i + 1} = \bigcup_{j:\, J_j > i} \mathbb B_{i}^j, \quad \mathbb B_{J} = \{B_{{\rm L}}, B_{{\rm R}}\} \cup \,  \bigcup_{j} \mathbb B_{J_j}^j.
\end{equation}
Boxes in $\mathbb B_{i+1}$ inherit the marked points on their faces from the $\mathbb B_{i}^j$'s. The centers of $\partial_{\rm L} T_{j}, \partial_{\rm R} T_{j}$, $1 \leq j \leq \ell$, along with two arbitrary points at the boundary of $\gamma'(1)$ and $\gamma'(\ell)$ on the face shared with $B_{{\rm L}}$ and $B_{{\rm R}}$, respectively, define the marked points for all boxes comprised in $\mathbb{B}_1$; indeed, cf.~\eqref{eq:bridge-fifinal-1}, exactly two such points lie on the boundary of each box $\gamma'(j)$, $1 \le j \le \ell$.

We now verify that $\mathbb B = \bigcup_{1 \le j \le J} \mathbb B_j$ defined by \eqref{eq:bridge-fifinal-1} and \eqref{eq:bridge-fifinal-3} satisfies the properties \eqref{B1}-\eqref{B4} for $(\mathcal C, 
\mathcal D, s, s', m, \xi, T)$ with $s'= \frac{s^{1/4} }{100}$. We proceed in reverse order and start with \eqref{B4}.
By construction, $|\mathbb{B}| \leq |\mathbb{B}^1|+ \sum_{1\leq j < \ell} |\mathbb{B}^j| +2$ and the desired bound on $|\mathbb B|$ follows by combining item \textit{iv)} in Lemma~\ref{lem:coarse_exist} (recall that $\ell=\ell(\gamma)$), with the fact that each $\mathbb{B}^j$ satisfies \eqref{B4} with aspect ratio $\frac{N}L \leq C$ and $L =  {\rm rad}({\gamma}(j))^{\xi}$. From this \eqref{B4} for $\mathbb{B}$ readily follows upon possibly increasing the value of $m=m(\xi)$, henceforth fixed. The required bound on $J$, defined above \eqref{eq:bridge-fifinal-3}, is inherited from the uniform bound on the $J_j$'s.

We turn to \eqref{B3}. First note that $J \geq 2$ so the boxes in $\mathbb{B}_1$ never form part of $\mathbb{H}$.
 The required box sizes in \eqref{B3} thus follow on the one hand from item \textit{ii)} in Lemma~\ref{lem:coarse_exist}, which takes care of boxes in $\mathbb{B}_1$. For boxes in $\mathbb{B}^j$ the required size follows from the corresponding property \eqref{B3} in force for $\mathbb{B}^j$ by the choice of parameters in \eqref{eq:bridge-T-j} and the obvious monotonicity of condition \eqref{B3} in $s$ and $s'$ (note in particular that $s'= s^{1/4}/100 \le (s / 64)^{\xi/2}/100$ holds for $ s\geq C(\xi)$ since $\xi \in (1/2, 1)$, whence boxes in $\mathbb{H}$ are indeed large enough). Finally items \textit{i)} and \textit{iii)} in Lemma~\ref{lem:coarse_exist} imply together that $B_{{\rm L}}$ and $B_{{\rm R}}$ have radius at least $s'$.

The connectivity requirement \eqref{B2} follows immediately from item \textit{i)} in Lemma~\ref{lem:coarse_exist}, the fact that $\gamma$ is a coarse path, and the observation that the gaps created when passing from $\gamma$ to $\gamma'$ are bridged with the help of the tubes $T_j$, $1\leq j < \ell$ introduced above \eqref{eq:bridge-fifinal-2}. From this, the above definition of the marked points and the connectivity property \eqref{B2} satisfied by each individual bridge $\mathbb B^j$, the property  \eqref{B2} for $\mathbb{B}$ readily follows.

 We finally come to \eqref{B1}.  For boxes in $\mathbb{B}_1$, the required separation property, which only concerns other boxes in $\mathbb{B}_1$ along with $\overline{\mathcal C}$
and $\overline{\mathcal D}$, follows directly from \eqref{eq:bridge_gamma'-diest}. For a box $B \in \mathbb B_i$, $1<i < J$ with $ B \in \mathbb{B}^j_i$, the required separation of $\widetilde{B}$ from other boxes $B' \in \mathbb B^j$ follows from the property \eqref{B1} for the bridge $\mathbb B^j$. If on the other hand $B' \notin \mathbb B^j$, then either i) $B' \in \mathbb{B}_1$, in which case $d(B,B') \geq d (B, \partial_{\rm L} T_{j} \cup \partial_{\rm R} T_{j})$ and separation follows again from \eqref{B1} for $\mathbb{B}^j$; or ii) $B' \in \mathbb B^{j'}$ for some $j' \neq j$ in which case one uses the first property in \eqref{eq:bridge-fifinal-2},
recalling from Lemma~\ref{lem:bridging_super_specialized} that all boxes in $\mathbb B^{j'} $ belong to $ T_{j'}$. This is more than enough to conclude since $\text{rad}(B) \leq {\rm rad}({\gamma}(j))^{\xi}$. For the separation requirement on the holes, one proceeds similarly, using the first item in \eqref{eq:bridge-fifinal-2} when the two holes belong to different bridges $\mathbb B^{j'}$, applying \eqref{B1} when the two boxes belong to the same tube $T^j$ (the same observation with regards to monotonicity in $s'$ as in \eqref{B3} applies). Finally if either of the boxes (but not both) in $\mathbb{H}$ belong to $\{B_{{\rm L}}, B_{{\rm R}}\} $ one uses the second item in \eqref{eq:bridge-fifinal-2}. Lastly to ensure that the $s'$-neighborhoods of $B_{{\rm L}}$ and $B_{{\rm R}}$ are disjoint one uses that $\ell \geq 2$, see item \textit{iv)} in Lemma~\ref{lem:coarse_exist}, which together with item \textit{i)} therein yields that $d(B_{{\rm L}}, B_{{\rm R}}) \geq cs $. 
\end{proof}
 
 We now return to the
 \begin{proof}[Proof of Lemma~\ref{lem:bridging_super_specialized}]
 	The proof is essentially one-dimensional. More precisely, it 
 	suffices to produce a partition of $T^0 \stackrel{\text{def.}}{=} [-L, N + L] \times \{0\}^{d-1}$ into 
 	contiguous segments, each having length at most $2L$, grouped into disjoint families $\mathbb I_1, \ldots, \mathbb I_J$ that satisfy the (continuous analogues of) properties \eqref{B1}-\eqref{B4} of a bridge associated to $(\mathcal C, \mathcal D, s, s', m, \xi, T^0)$ where 
 	$\mathcal C = \{-L\} \times \{0\}^{d-1}$ and $\mathcal D = \{N+L\} \times \{0\}^{d-1}$, and with the ambient space $\mathbb{R}^d$ replaced by $\mathbb{R} \times \{ 0\}^{d-1}$ (note that all of \eqref{B1}-\eqref{B4} are naturally defined under this one-dimensional projection). For then, one simply defines $\mathbb{B}$ as the union of the $d$-dimensional
  boxes having the same centers and side-lengths as the line segments constituting $\mathbb{I}= \bigcup_{1\leq j \leq J} \mathbb{I}_j$, identifying those in $\mathbb{H}$ with boxes associated to line segments in $\mathbb{I}_J$, and define the marked points being the endpoints of the segment $I \in \mathbb{I}$ corresponding to a box $B \in \mathbb{B}$. One readily sees that $\mathbb{B}$ thereby inherits properties \eqref{B1}-\eqref{B4}.
  
  \medskip
We now produce the desired partition of $T^0$ in a recursive fashion. The driving force is the following very simple
 	
 \begin{claim}[$\xi \in (\frac12, 1)$] \label{Claim-1d-bridge} For all $R \ge 4 $ and $ C(\xi) \leq  r \leq \frac R4$, there exists an (ordered) sequence of disjoint sub-intervals $(I_1, I_2, \ldots, I_{k})$ of $[0, R]$ such that, with $|I|= b-a$ when $I=[a,b]$, $a<b$,
 	\begin{align}
 		&\text{$\textstyle k \le 1+ \frac Rr$,}\label{eq:bridge-1d1}\\
 		&\text{$d(I_1, \{0\}), d(I_k, \{R\}) \in [2 r^{\xi}, 7r^{\xi}]$, $d(I_i, I_{i+1}) = 2 r^{\xi}$, $1\leq i < k$}\label{eq:bridge-1d2}\\
 		&\text{$|I_i| \in [r^{\xi}, r]$, for all $1\leq i \leq k$.}\label{eq:bridge-1d3}
 	\end{align}
 \end{claim}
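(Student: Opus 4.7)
The proof will proceed by an explicit construction. My plan is to fix all interior gaps and both boundary gaps to the minimum value $2r^\xi$ admissible in \eqref{eq:bridge-1d2}. With this simplification, the only remaining freedom is the number $k$ of intervals and the lengths $|I_i|$, and the identity $R = (k+1)\cdot 2r^\xi + \sum_i |I_i|$ forces $\sum_i |I_i|$ to equal $L := R - 2(k+1)r^\xi$. Since \eqref{eq:bridge-1d3} constrains $|I_i| \in [r^\xi, r]$, the problem reduces to choosing $k$ so that $L/k \in [r^\xi, r]$; one then simply takes all $|I_i|$ equal to $L/k$ and places the intervals contiguously starting at position $2r^\xi$.

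I would then set $k := \lceil (R - 2r^\xi)/(r+2r^\xi) \rceil$. Since $(R-2r^\xi)/(r+2r^\xi) \leq R/r$, the bound $k \leq 1 + R/r$ in \eqref{eq:bridge-1d1} is immediate, and $k \geq 1$ follows from $R \geq 4r > 2r^\xi$. The upper bound $L \leq kr$ (which gives $|I_i| \leq r$) is equivalent to $k(r+2r^\xi) \geq R - 2r^\xi$, which holds by the very choice of $k$.

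The crux is the complementary lower bound $L \geq kr^\xi$, equivalent to $k \leq (R-2r^\xi)/(3r^\xi)$. Combined with $k \leq (R-2r^\xi)/(r+2r^\xi) + 1$ coming from the ceiling, this reduces to the arithmetic inequality $(R - 2r^\xi)(r - r^\xi) \geq 3r^\xi(r+2r^\xi)$. Using $\xi \in (\tfrac12, 1)$ and $r \geq C(\xi)$ one has $r - r^\xi \geq r/2$ and $r+2r^\xi \leq 2r$, while $R \geq 4r$ yields $R - 2r^\xi \geq 3r$, so the left-hand side is $\geq 3r^2/2$ and the right-hand side is $\leq 6r^{1+\xi}$. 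The inequality then reduces to $r^{1-\xi} \geq 4$, which is ensured by taking $C(\xi) \geq 4^{1/(1-\xi)}$.

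With $k$ fixed and $|I_i| := L/k \in [r^\xi, r]$, I would place the intervals as $I_i = [\,2r^\xi + (i-1)(L/k + 2r^\xi),\, 2r^\xi + (i-1)\cdot 2r^\xi + i \cdot L/k\,] \subset [0, R]$. Both boundary gaps then equal $2r^\xi \in [2r^\xi, 7r^\xi]$ and each interior gap is exactly $2r^\xi$, which gives \eqref{eq:bridge-1d2} and completes the proof. The only genuinely non-trivial step is the feasibility check of the previous paragraph, and this is precisely where the strict inequality $\xi < 1$ and the largeness requirement $r \geq C(\xi)$ are needed; everything else is elementary arithmetic and a contiguous placement of intervals from left to right.
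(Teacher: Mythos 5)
Your proof is correct. The construction differs from the paper's in its allocation of slack, though both rest on the same counting idea. You take $k = \lceil (R - 2r^\xi)/(r + 2r^\xi) \rceil$, pin every gap (interior and boundary) to exactly $2r^\xi$, and make all intervals equal with common length $L/k$ where $L = R - 2(k+1)r^\xi$; the entire lemma then hinges on the single feasibility check $L/k \in [r^\xi, r]$, which you reduce to $(R - 2r^\xi)(r - r^\xi) \geq 3r^\xi(r + 2r^\xi)$ and verify correctly for $r \geq 4^{1/(1-\xi)}$. The paper instead sets $\tilde k = \lfloor R/(r + 2r^\xi) \rfloor$, places intervals $J_i$ of length exactly $r$, and absorbs the leftover $R - \tilde k(r + 2r^\xi)$ by truncating or appending the final interval, with a two-case split according to whether this leftover exceeds $5r^\xi$. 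In effect you spread the rounding error uniformly over all interval lengths, while the paper concentrates it in one interval and in the right boundary gap; this exchanges the paper's case analysis for your one arithmetic inequality. Your version delivers the slightly sharper $d(I_k,\{R\}) = 2r^\xi$ where the paper only gets $d(I_k,\{R\}) \in [2r^\xi, 7r^\xi]$, but since the downstream use (building $\mathbb{I}_{J'+1}$ in Lemma~\ref{lem:bridging_super_specialized}) consumes only \eqref{eq:bridge-1d1}--\eqref{eq:bridge-1d3}, this buys nothing. Neither route is clearly preferable; both are elementary and equally short.
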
	
 
 \begin{proof}[Proof of Claim~\ref{Claim-1d-bridge}]
Let $\tilde{k} = \lfloor \tfrac{R}{r + 2 r^{\xi}} \rfloor \ge 1$ so that 
 	$\tilde k(r + 2 r^{\xi}) \le R < (\tilde k+1)(r + 2 r^{\xi})$ and let 
	$$J_i = [(i-1)r + 2i r^{\xi}, i(r + 
 	2 r^{\xi})], \text{ for $i  \geq1$}.$$
	 If $R - \tilde k(r + 2 r^{\xi}) \le 5r^{\xi}$, we set $k = \tilde{k}$, $I_i = J_i$ for $1 \leq i < k$ and $I_k=I_{\tilde{k}}= [(\tilde k -1)r + 2\tilde k r^{\xi}, \tilde k(r + 
 	2 r^{\xi}) -2r^{\xi}]$. Otherwise $R - \tilde k(r + 
 	2 r^{\xi}) \in (5r^{\xi}, r + 2r^{\xi})$ and we set $k = \tilde{k} + 1$ with $I_i =J_i$ for $1\le i \leq k$ and  $I_{\tilde k + 1} = [\tilde kr + 2(\tilde k+1) r^{\xi}, R - 
 	2r^{\xi}]$. In either case, \eqref{eq:bridge-1d1} plainly holds and so does the second item in  \eqref{eq:bridge-1d2}. Moreover, $d(I_{1}, \{0\})=2r^{\xi}$ in either case, and $ 2r^{\xi} \leq d(I_{k}, \{R\})  \leq 5 r^{\xi}$ in the former whereas $d(I_{k+1}, \{R\}) = 2 r^{\xi}$  in the latter case, yielding \eqref{eq:bridge-1d2}. Moreover, $|I_i|=r$ except possibly when $i=k$, and $|I_k| \in [(r-2r^{\xi}) \wedge (5r^{\xi} - 4r^{\xi}), r] = [r^{\xi}, r]$, whence \eqref{eq:bridge-1d3}.  \end{proof}

We now conclude the proof of the lemma. We start by setting $\mathbb{I}_1 = \{I_{1, 1}, \ldots, I_{1, k}\}$ obtained by applying Claim~\ref{Claim-1d-bridge} to the interval $[-L, N + L]$ for $r = 
 	L/2$ (after a suitable translation) when $L \geq C(\xi)$.  Now suppose that after a round $J' \ge 1$, we have obtained the collections $\mathbb{I}_1, \ldots, \mathbb{I}_{J'}$ of (closed) sub-intervals of $[-L, N + L]$ satisfying
\begin{equation}\label{eq:separation_I}
\begin{split}
d\Big(I, \bigcup_{j \le J'} \bigcup_{I' \in \mathbb{I}_j} I'  \setminus I\Big),\, d (I, \{-L, N + L\}) \ge 2|I|^{\xi} \vee 
\frac{s^{\xi}}2, \quad |I| \ge\frac{ \sqrt{s} }{100},
\end{split}
\end{equation}
for all $I \in \mathbb{I}_{j'}$, 
$j'  \le J'$. Also suppose that $|\mathbb{I}_{j+1}| \le 8|\mathbb{I}_{j}|$ for all $j' < J'$. Note that 
 	condition \eqref{eq:separation_I} is satisfied by $\mathbb{I}_1$ due to the \eqref{eq:bridge-1d2}, \eqref{eq:bridge-1d3} and since
 	$L \ge s$ and $\xi \in (\frac12,1)$ by hypothesis.
 	Now let $\mathbb{I}_{J'}'$ denote the collection consisting of the closures of the components of $[-L, N+L] \setminus \bigcup_{j \le J'} \bigcup_{I \in \mathbb{I}_j} I,$ 
 	which are closed sub-intervals of $[-L, N+L]$. If no interval in $\mathbb{I}_{J'}'$ has length larger than $s$, 
 	we stop, assign $J = J' + 1$ and $\mathbb{I}_J = \mathbb{I}_{J'}'$.  Otherwise we let $\mathbb{I}_{J' + 1}$ consist of 
 	the sub-intervals obtained by applying Claim~\ref{Claim-1d-bridge} individually to each interval $I$ in $\mathbb{I}_{J'}'$ with $R=|I| \ge s (\geq C(\xi))$ and $r = \frac{ |I|}4$. By similar arguments as used for $\mathbb{I}_1$, it follows that $\mathbb{I}_{J' + 1}$ satisfies 
 	\eqref{eq:separation_I}. On the other hand, by \eqref{eq:bridge-1d1} we deduce tha $|\mathbb{I}_{J' + 1}| \le 
 	8|\mathbb{I}_{J'}|$. 
	
	Overall, this defines $\mathbb I_1, \ldots, \mathbb I_J$. Properties~\eqref{B1}-\eqref{B3} for $\mathbb{I}= \bigcup_{1\leq j \leq J} \mathbb{I}_j$ are direct consequences of the above construction and~\eqref{eq:separation_I}. As to \eqref{B4}, it is a consequence of the definition of $J$ as well as 
 	second and the third item in the Claim that $J \le (\log \xi^{-1})^{-1} \log \log 10 L$ for all $C'$ large enough depending only on $\xi$. Also, from our induction hypothesis we have $|\mathbb{I}_{j+1}| \le 8 |\mathbb{I}_{j}|$ for all $j < J-1$ and $|\mathbb{I}_J| \le 2|\mathbb{I}_1| \sum_{j < J} 8^j$. These observations together imply 
 	\eqref{B4}.
 \end{proof}
 
 \begin{remark}
 \label{R:bridge-other-paper} The bridge exhibited in Proposition~\ref{prop:general_bridge} is also employed in our companion article \cite{RI-III} within a different framework, but to a similar effect. Namely, it enables us there to reconstruct a connection within a `coarse pivotal' configuration at a not too high energetic cost. In the specific context of \cite[Section 8.2]{RI-III} where it is used,  the bridge is actually constructed within a tube $T$ having `thin width' and to a more elaborate model $\mathcal{V}_T$ rather than $\mathcal{V}^u$, for which we first need to prove a conditional decoupling property akin to Proposition~\ref{prop:cond_decoup} above; see \cite[Lemma 7.1]{RI-III}. Once this is done, the actual surgery performed in the course of proving \cite[Lemma 8.8]{RI-III} unfolds in a similar way as in Section~\ref{Sec:surgery} below.
 \end{remark}

 \section{Strong percolation from gluing property}\label{sec:supercritical}

In this short section, which can be read independently of the rest of this article we reduce the proof Theorem~\ref{T:MAIN}, to that of proving a single `gluing property,' stated in Lemma~\ref{lem:uniqreduct}, which roughly asserts that any (partial) sprinkling will merge a significant proportion of ambient large clusters. Most of the difficulty hides in the proof of this lemma, which is postponed to later sections. In particular, this is where the delicate surgery argument to perform the gluing is carried out, which draws on the tools derived in the previous two sections.

The present section, which is independent of Sections~\ref{sec:sfe} and~\ref{sec:bridging}, contains the derivation of Theorem~\ref{T:MAIN} from Lemma~\ref{lem:uniqreduct}, which is divided into two parts. First, we show using Lemma~\ref{lem:uniqreduct} as an input that the disconnection assumption \eqref{eq:disc-cond} implies a weak form of \eqref{eq:barh1}, which yields the existence and uniqueness (up to sprinkling) of 
large local clusters with probability tending to one at large scales. This is the content of Proposition~\ref{prop:uniq}. This result is a loose analogue of \cite[Prop.~4.1]{DCGRS20}, and it draws inspiration from a result of \cite{MR3634283}. While the proofs of these two propositions are superficially similar (in fact the layout is borrowed from~\cite{MR3634283}), the crucial gluing device used as an input is a different matter entirely; this is because, as discussed at length in \S\ref{subsec-pf-outline} (see above \eqref{eq:no-FE}), the arguments of \cite{DCGRS20} do not apply at all. We note that the route we take provides a more robust way of 
proving `gluing results' of this type (such as \cite[Prop.~4.1]{DCGRS20} in particular).

In a second step, the result of Proposition~\ref{prop:uniq}, which is not quantitative, is then bootstrapped to a stretched-exponential bound in order to produce the desired controls in \eqref{eq:barh1}, thus concluding the proof of Theorem~\ref{T:MAIN}. Since this second step follows a by now relatively standard renormalization procedure, we present it first immediately after stating Proposition~\ref{prop:uniq}, which plays the role of a `seed' estimate for the renormalization scheme. The remainder of this section is then devoted to the proof of Proposition~\ref{prop:uniq}. 

\bigskip

We will work with the following `unique crossing' event. For any $u > v>0$ and $M \geq 1$, let
\begin{equation}
\label{eq:defcross}
\begin{split}
\text{\textbf{UC}}(M,u,v)\stackrel{\textnormal{def.}}{=} &\{\lr{}{\mathcal V^{u}}{B_M}{\partial 
B_{6M}}\}\,\\& \qquad \cap \Big\{\begin{array}{c}\text{all clusters in $\mathcal V^{u}\cap B_{4M}$ crossing 
$B_{4M}\setminus B_{2M}$} \\ \text{are connected to each other in $\mathcal V^{v}\cap B_{4M}$} 
\end{array}\Big\}.
\end{split}
\end{equation}
Recall the definition of $M(\cdot)$ from \eqref{eq:def_M}, which depends on the parameter $\gamma_M$. 
\begin{prop}
\label{prop:uniq}
For all $\gamma_M \geq \Cr{c:gamma_M}$, if $v>0$ is such that \eqref{eq:disc-cond} holds for some $\alpha > 0$, then for all $\delta >0$,
\begin{equation}\label{eq:propuniq2}
\limsup_{N\to \infty} \inf_{  u \in [2\delta, v-2\delta] } \P\left[\textnormal{\textbf{UC}}(M(N),u, u - \delta)\right] \geq 1 -C \alpha.
 	\end{equation}
 \end{prop}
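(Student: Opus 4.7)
Decompose $\mathbf{UC}(M,u,u-\delta) = E_1 \cap E_2$, where
$E_1 = \{\lr{}{\mathcal V^{u}}{B_M}{\partial B_{6M}}\}$ is the \emph{existence} event and $E_2$ is the \emph{uniqueness} event (that every cluster of $\mathcal V^u \cap B_{4M}$ crossing $B_{4M}\setminus B_{2M}$ is connected in $\mathcal V^{u-\delta} \cap B_{4M}$ to every other such cluster). The plan is to bound $\P[E_1^c]$ and $\P[E_1 \cap E_2^c]$ separately and combine.

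\emph{Existence.} Since $u \leq v-2\delta < v$, monotonicity gives $\mathcal V^u \supset \mathcal V^v$, so any disconnection event for $\mathcal V^u$ implies the corresponding one for $\mathcal V^v$; the hypothesis \eqref{eq:disc-cond} at level $v$ therefore applies at level $u$ with the same constant $\alpha$. Choose $r=r(N)$ with $M(r) \asymp M(N)=M$ (possible since $M(\cdot)$ in \eqref{eq:def_M} varies continuously and is eventually increasing for $\gamma_M \geq \Cr{c:gamma_M}$). Translation invariance of $\P$ together with a union/Markov bound over the $O((M/r)^d)$ translates of $B_r$ tiling $B_M$ upgrades \eqref{eq:disc-cond} to the statement that, with probability $\geq 1 - C\alpha - o_N(1)$, at least one translate $x + B_r \subset B_M$ is connected in $\mathcal V^u$ all the way to $\partial B_{6M}$. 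This yields $\liminf_N \inf_u \P[E_1] \geq 1 - C\alpha$.

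\emph{Uniqueness.} This is where the gluing Lemma~\ref{lem:uniqreduct} enters, asserting that a single partial sprinkling already merges a significant proportion of ambient large clusters. On $E_1$ intersected with the high-probability averaging event from the previous step, the $r$-neighborhoods of any two distinct crossing clusters $\mathcal C, \mathcal D$ of $\mathcal V^u \cap B_{4M}$ must together cover a macroscopic portion of the annulus $B_{4M}\setminus B_{2M}$ (else too many $r$-translates would be disconnected from $\partial B_{6M}$, contradicting the averaging). Applying the topological Lemma~\ref{L:goodpath_supercrit} with $U, V$ the respective $r$-neighborhoods produces a $\ast$-path of contact boxes $\Lambda_k$ of radius $2r$, each meeting both $\mathcal C$ and $\mathcal D$ on scale $r$. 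Feeding this contact structure into Lemma~\ref{lem:uniqreduct} (applied to $\mathcal V^u \to \mathcal V^{u-\delta}$) yields that the conditional probability, given the two clusters, that they remain disconnected after sprinkling decays to $0$ as $N \to \infty$. A union bound over the (at most polynomially many in $M/r$) pairs of such crossing clusters gives $\P[E_1 \cap E_2^c] \to 0$, hence $\liminf_N \inf_u \P[\mathbf{UC}] \geq 1 - C\alpha$.

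\emph{Main obstacle.} The bulk of the difficulty is outsourced to Lemma~\ref{lem:uniqreduct} itself, whose proof occupies Sections~\ref{Sec:surgery}--\ref{Sec:2lemmas} and rests on the dynamic surgery and hierarchical bridging construction outlined in \S\ref{subsec-pf-outline}, together with the sprinkled finite-energy property of Section~\ref{sec:sfe}; all are designed to circumvent the absence of finite energy \eqref{eq:no-FE} for $\mathcal V^u$. By comparison, the reduction of Proposition~\ref{prop:uniq} to the gluing lemma is a comparatively light organizational step, whose only subtle point is ensuring that the averaging at scale $r \ll M$ is quantitatively strong enough to apply Lemma~\ref{L:goodpath_supercrit} simultaneously to every pair of candidate crossing clusters.
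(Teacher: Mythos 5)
Your decomposition into an existence event and a uniqueness event, and the treatment of the existence part via a union bound over $O((M/N)^d)$ translates using the disconnection hypothesis, matches the paper's introduction and control of the event $A$ in \eqref{eq:supercritA}--\eqref{eq:uniqueproof}. You also correctly locate the bulk of the difficulty in Lemma~\ref{lem:uniqreduct}. What is missing is precisely the part you describe as ``a comparatively light organizational step,'' and this is where the proposal breaks down.

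The step ``the $r$-neighborhoods of any two distinct crossing clusters $\mathcal C, \mathcal D$ must together cover a macroscopic portion of the annulus'' is false when there are three or more crossing clusters. The averaging event $A$ ensures that every $r$-box in the annulus touches \emph{some} crossing cluster, not that it touches $\mathcal C$ or $\mathcal D$ in particular; a third crossing cluster can occupy the region between them. Consequently the hypothesis $U \cup V = B_m \setminus B_{n-1}$ of Lemma~\ref{L:goodpath_supercrit} is not met for the $r$-neighborhoods of two individual clusters, and the chain of contact boxes $\Lambda_k$ does not exist. Without this covering there is no ``fuel'' for the gluing: the two clusters to be merged need not be in contact anywhere, so Lemma~\ref{lem:uniqreduct} cannot even be set up, let alone applied conditionally on them. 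The subsequent union bound over pairs of clusters is therefore resting on an input that is simply unavailable in this form.

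This is the exact obstruction the paper's Benjamini--Tassion halving scheme is designed to dissolve. Instead of pairs of clusters, the paper works with the full collection $\widetilde{\mathcal U}$ of equivalence classes of crossing clusters (under the partially sprinkled configuration $\eta_j$ of \eqref{omega_i}) and considers bipartitions $\widetilde{\mathcal U}=\widetilde{\mathcal U}_1 \sqcup \widetilde{\mathcal U}_2$. Because such a bipartition covers \emph{all} crossing clusters, the averaging event $A$ does force the union of the supports to be $N$-dense in the relevant sub-annulus $A_j$, yielding property \eqref{Uconnect} and hence Lemma~\ref{L:goodpath_supercrit}. Moreover the event $\{U_{j+1} > 1\vee(U_j/4 + U_{j,j+1}(\eta_j))\}$ in \eqref{e:def-E} guarantees, by pigeonhole, the existence of a bipartition with $|\widetilde{\mathcal U}_1|\le 4$ as in \eqref{Upartition}, which keeps the number of partitions one must union over at $O(M^{4(d-1)})$; a na\"ive union bound over all bipartitions of up to $CM^{d-1}$ crossing clusters would be exponential in $M^{d-1}$ and useless. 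Finally, the introduction of $U_{j,j+1}$, and the entire nested family $V_0 \supset V_2 \supset \dots$, is what produces the ``large interfaces'' through which the gluing is performed; none of this structure is available in the direct pairwise approach. So the halving iteration from \eqref{eq:uniqreduct} to \eqref{eq:reduct2}, which looks like bookkeeping, is in fact the load-bearing device that makes the reduction to Lemma~\ref{lem:uniqreduct} possible, and a proof attempt that replaces it by a pairwise union bound has a genuine gap.
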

 
 We first explain how to deduce Theorem~\ref{T:MAIN} from Proposition~\ref{prop:uniq}.
 
 \begin{proof}[Proof of Theorem~\ref{T:MAIN} (assuming Proposition~\ref{prop:uniq})] 
 We first argue that $ii)$ implies $i)$. Let $u>0$ and $v \in (0,u)$ be given. Choose any $v' \in (v,u)$, for concreteness say $v' =\frac{v+u}{2}$.
 By requiring $\text{Exist}(r,v')$ and $\text{Unique}(r,v',v)$ to occur simultaneously for all scales $r=r_0 2^k$ for $r_0 \geq 1$ and $k \geq0$, one readily infers from \eqref{eq:barh1}, using a union bound and a straightforward gluing argument involving \eqref{e:strong-perco} that the disconnection event in \eqref{eq:disc-cond} has stretched exponential decay in $r$, i.e.
$$ \lim_r e^{r^c}\displaystyle{\P[\nlr{}{ \mathcal{V}^v}{B_{r}}{B_{M(r)}}]} = 0.$$
 From this \eqref{eq:disc-cond} immediately follows for any $\gamma_M >1$ in view of \eqref{eq:def_M}.
 
 We now show that $i)$ implies $ii)$, which is the heart of the matter, and brings into play Proposition~\ref{prop:uniq}. To this end we assume from here onwards that $\gamma_M \geq \Cr{c:gamma_M}$. Let $u>0$ and $v,v'$ with $0< v< v'<u$ be given. 
 As we now explain, it is enough to prove a stretched exponential upper bound of the form
 \begin{equation}\label{e:supercrit-renorm10}
  \P\left[\text{\textbf{UC}}(r,v', v)^c\right] \leq C{ e}^{-r^{\Cr{c:decay}}},
 \end{equation}
valid for all $r \geq 1$, with constant $C$ possibly depending on $v$ and $v'$. Indeed, in view of \eqref{eq:defcross} and \eqref{e:strong-perco}, the event $\text{\textbf{UC}}(r/2,v', v)$ implies $\text{Exist}(r,v')$, which inherits the bound \eqref{e:supercrit-renorm10}, as required by \eqref{eq:barh1}. To obtain the analogue estimate for $\P[\textnormal{Unique}(r,v',v)]$, one observes that the latter event is implied by the intersection of all translates of $\text{\textbf{UC}}(\frac{r}{100},v', v)$ mapping the origin to a point in $B_r$; for, any connected component of $\mathcal V^{v'} \cap B_r$ having diameter at least $\frac r5$ will cross the annulus with inner radius $\frac r{50}$ and outer radius $\frac{r}{25}$ around at least one such box, and the joint occurrence of all unique crossing events implies that these components are all connected in $\mathcal{V}^v \cap B_{2r}$, as required for $\textnormal{Unique}(r,v',v)$ to occur. The required upper bound for $\P[\textnormal{Unique}(r,v',v)^c]$ thus follows from \eqref{e:supercrit-renorm10} and a union bound.

  The proof of \eqref{e:supercrit-renorm10} under the hypothesis that \eqref{eq:propuniq2} holds for all $v< u$, as implied by Proposition~\ref{prop:uniq} and item $i)$ in Theorem~\ref{T:MAIN} (our standing assumption), follows a by now relatively standard renormalization scheme, so we only sketch the argument. Let $\varepsilon \stackrel{{\rm def.}}{=} \frac{v' - v}{3} \wedge \frac{u-v'}{3}$. We will apply the results 
of \cite{drewitz2018geometry} to the graph $\Z^d$ (with unit weights); see also the proof of 
\cite[Lemma~5.16]{gosrodsev2021radius} for a similar argument in the context of the Gaussian free field. Let $L_0 = M(N_0)$ for 
a large positive integer parameter $N_0$ to be determined. For any $x \in 
\Z^d$, let $\text{\textbf{UC}}_x(L_0, u , v)$ denote the event in \eqref{eq:defcross} `shifted by $x$' 
(now the center of all the boxes involved).

Choosing $\lambda$ sufficiently large (to be specific, one can pick $\lambda = 20 \geq 20c_{18}C_{10}$ in the context of~\cite[(8.3)]{drewitz2018geometry} where $C_{10}=1$ and $c_{18} < 1$ is determined by the isoperimetric constant on $\Z^d$), one sets for $x \in \Z^d$, with $\bar{\lambda}=1.1\lambda$, $\ell_0=3^d \vee 12 \bar{\lambda}$ and 
$L_n=\ell_0^n L_0$, 
\begin{equation}
	\label{eq:RG4}
	\begin{split}
		&\widehat{\text{\textbf{UC}}}_{x,0} = \bigcap_{y\in {L_0\Z^d \cap B_{\lambda L_0}(x)}}\text{\textbf{UC}}_{y}(L_0, v' - \varepsilon , v + \varepsilon),\\
		&\widehat{\text{\textbf{UC}}}_{x,n} =  \bigcap_{y,z\in (L_{n-1}\Z^d\cap B_{\bar{\lambda} L_n}(x)): \, d(y,z)\geq L_n} \big(\widehat{\text{\textbf{UC}}}_{y,n-1}^k \cup \widehat{\text{\textbf{UC}}}_{z,n-1}^k\big),  \quad \text{for $n \geq 1$}.
	\end{split}
\end{equation}
By choice of $\varepsilon$, one obtains from Proposition~\ref{prop:uniq} applied with $\delta=\frac{\varepsilon}{2}$ (Proposition~\ref{prop:uniq} is in force under assumption $i)$ in Theorem~\ref{T:MAIN} whenever $v'-v \leq c$), followed by a union bound over $L_0\Z^d \cap B_{\lambda 
L_0}(x)$ that $\limsup_{L_0}\P[\widehat{\text{\textbf{UC}}}_{x,0}]  \geq 1 - C \alpha$. Note that the condition $v'-v \leq c$ (which ensures that the requirement `$u\geq 2\delta$' in \eqref{eq:propuniq2} holds
) is harmless; indeed if $v$ is decreased in \eqref{e:supercrit-renorm10} then the bound continues to hold by monotonicity. In particular, by choosing first $\alpha=\alpha(d)$ small enough and then $L_0=L_0(v,v', \gamma_M)$ sufficiently large, one can ensure 
that the conditions (7.5) and (7.6) in \cite{drewitz2018geometry} are satisfied, whence Proposition 7.1 
therein applies and yields that
\begin{equation}
	\label{eq:RG5}
	\P[(\widehat{\text{\textbf{UC}}}_{0,n}^{\varepsilon})^c] \leq 2^{-2^n}, \text{ for all $n \geq 0$},
\end{equation} 
where the events $\widehat{\text{\textbf{UC}}}_{x,n}^{\varepsilon}$ refers to those defined in \eqref{eq:RG4}, but with 
parameters 
$(v', v)$ in place of $(v' - \varepsilon , v + \varepsilon)$. The presence of two opposite directions of monotonicity requires a small extension over the decoupling inequality stated in \cite[Theorem 2.4]{drewitz2018geometry}; alternatively one can resort to the stronger  result \eqref{eq:mul_excursion_sandwich} (extending Proposition~\ref{prop:cond_decoup}) applied with $k=2$. It is worth emphasizing however that the decoupling only needs to operate between boxes of a given size, $L_n$, separated by a large multiple $c \ell_0 L_n$ of their diameter, as originally in \cite{MR2891880}.

In order to conclude, one applies 
\cite[Lemma~8.6]{drewitz2018geometry}, 
which implies that whenever $ \widehat{\text{\textbf{UC}}}_{0,n}^{\varepsilon}$ occurs, any two 
connected sets in $B_{\lambda L_n}$, each of diameter at least $(\lambda/20) L_n$, are connected by a 
path $\gamma \subset B_{2\lambda L_n}$ such that $\widehat{\text{\textbf{UC}}}_{x,0}^{\varepsilon}$ occurs for all 
$x \in \gamma$. This is readily 
seen to imply $\text{\textbf{UC}}(r,v', v)$ for $r =r_n = \lfloor (\lambda/10) L_n \rfloor$, hence \eqref{eq:RG5} yields \eqref{e:supercrit-renorm10} for such $r$. The remaining values of $r$ are taken care of using a union bound involving translates of the event $\text{\textbf{UC}}(r_n,v', v)$ at scale $r_n \geq cr$. 
 \end{proof}

 We now turn to Proposition~\ref{prop:uniq}. Below we give the proof in full assuming the validity of Lemma~\ref{lem:uniqreduct}, our gluing device.
 This initial reduction step is similar to the start of the proof of \cite[Proposition~4.1]{DCGRS20}, to which we will frequently refer throughout the rest of this section. The proof of Lemma~\ref{lem:uniqreduct} will occupy the remaining sections.

 \begin{proof}[Proof of Proposition~\ref{prop:uniq}] 
 Let $v, \delta >0$ and consider any $u $ with $2\delta \le u \leq v - 2\delta$. In the sequel we often abbreviate $M=M(N)$, for arbitrary $N \geq1$.
Define the collection
\begin{equation}
\label{eq:definiq1}
\mathcal{C} = \left\{\text{$C\subset B_{4M}$: $C$ is a cluster in $\mathcal V^u \cap B_{4M}$ intersecting $\partial B_{4M}$}\right\},
\end{equation}
and let $\eta\in\{ 0,1\}^{\Z^d}$ be any percolation configuration such that $\eta(x)=1$ whenever $x\in \mathcal V^u$ for some $x \in \Z^d$, i.e.~all vertices in $\mathcal V^u$ are open in $\eta$. We abbreviate this by $\{\eta=1\} \supset \mathcal V^u$ in the sequel. For such $\eta$, we write
\begin{equation}
	\label{eq:equiv}
	C  \sim_{\eta} C' \text{ if } \lr{}{\eta}{C}{C'}, \text{ for } C,C' \in \mathcal{C}. 
\end{equation}
For any sub-collection $\tilde{\mathcal{C}} \subset \mathcal{C}$, $\sim_{\eta}$ defines an equivalence relation on $\tilde{\mathcal{C}}$ and hence, in particular, the elements of $\tilde{\mathcal{C}} / \sim_{\eta} $ form a partition of $\tilde{\mathcal{C}}$. 
For every $0\leq i\leq \floor{2\sqrt{M}}$, let $V_i=B_{4M-i\sqrt{M}}$ and 
consider the sets
\begin{equation}
	\label{eq:Uij}
	\mathcal{U}_{i}(\eta)\stackrel{\textnormal{def.}}{=} \big\{C \in \mathcal{C} : C\cap V_{2i} \neq \emptyset  \, \big\} \big/ \sim_{\eta}
\end{equation}
for $0\leq i\leq \floor{ \sqrt{M}}$ and $\{\eta=1\} \supset \mathcal V^u$. Denoting $|\mathcal{U}_{i}(\eta)|={U}_{i}(\eta)$, notice that ${U}_{i}(\eta)$ is decreasing in both $\eta$ and 
$i$ in view of \eqref{eq:definiq1} and \eqref{eq:Uij}. This fact will be used repeatedly in the sequel.
Now, consider the percolation configurations $\eta_0 \leq \eta_1 \leq \dots \leq \eta_{\floor{\sqrt{M}}}$ under $\P$, whereby
\begin{equation}
\label{omega_i}
\eta_i \stackrel{\textnormal{def.}}{=} \begin{cases}
		1_{\{x \in \mathcal V^u\}}, & x \in V_{2i},\\
		1_{\{x \in \mathcal V^{u - \delta}\}}, & x \notin V_{2i}.
	\end{cases}
\end{equation}
In plain words, 
$\eta_i$ corresponds to a partial sprinkling outside of $V_{2i}$. For $0\leq i\leq \floor{\sqrt{M}}$, set
\begin{equation}
\label{eq:Ui}
\mathcal{U}_{i}\stackrel{\textnormal{def.}}{=} \mathcal{U}_{i}(\eta_{i}) \text{ and } U_i 
\stackrel{\textnormal{def.}}{=}|\mathcal{U}_i|. 
\end{equation}
Clearly $\eta_i$ is increasing in $i$ so by the previous observation, $U_i$ is decreasing in $i$. Let 	
 \begin{equation}	
 \begin{split}
A&\stackrel{\textnormal{def.}}{=}\{\lr{}{\mathcal V^u}{B_{10N}(x)}{\partial B_{6M}} ~\text{ for all } x \in N\Z^d \text{ 
s.t. } B_{10N}(x) \subset B_{4M}\} \label{eq:supercritA}. 
\end{split} 		
\end{equation}
In view of \eqref{eq:defcross}, \eqref{eq:supercritA} and 
\eqref{eq:Uij}--\eqref{eq:Ui}, for all $N \geq C$ the event $\text{\textbf{UC}}\big(M,u,u-\delta\big)$ occurs as soon as $A$ does 
and ${U}_{\lfloor\sqrt{M}\rfloor} =1$. Consequently, for all $N \geq C$, with $M=M(N)$, we obtain that
\begin{equation}\label{eq:uniqueproof}
\begin{split}	
\P[\text{\textbf{UC}}(M, u, u - \delta)^c] &\leq \P[A^c] + \P\big[A \cap\{ {U}_{\lfloor\sqrt{M}\rfloor}  >1\}\big] \\ 
&\leq C (M/N)^d\,\P\big[\nlr{}{\mathcal V^u}{B_{10N}}{\partial B_{M(10N)}}\big] + \P\big[A\cap\{{U}_{\lfloor\sqrt{M}\rfloor} >1\}],
\end{split}
\end{equation} 	
where, in deducing the second line, we applied a union bound over all $x \in B_{4M}$, used that for all such $x$, one has the inclusions $B_{6M} = B_{6M(N)} \subset B_{10M(N)}(x) \subset B_{M(10N)}$ since $M(10N) \ge 10M(N)$ by \eqref{eq:def_M} (recall that $\gamma_M>1$), and  translation invariance.

Since $v>0$ is such that \eqref{eq:disc-cond} holds and by monotonocity of the disconnection event with respect to $u$ in \eqref{eq:uniqueproof}, there is a subsequence $N_k \to \infty$ depending on $v$ along which the first term on the right-hand side of 
\eqref{eq:uniqueproof} is bounded by $C\alpha$ uniformly in $u \leq v$. 
Thus, \eqref{eq:propuniq2} follows once we show that the second term 
converges to 0 uniformly in $u \in  [2\delta,  v-2\delta]$ as $N \to \infty$. 

We start the proof of the latter by borrowing the set-up of the proof of \cite[Lemma~4.2]{DCGRS20}, which we now adapt to the present framework. Matters will however soon diverge. Recall that $\mathcal{U}_{i}(\eta)$ in \eqref{eq:Uij} is a set comprising equivalence classes of clusters in $\mathcal{C}$, which we view as elements of $2^{\mathcal{C}}$. The following notation will be convenient. We write $\mathscr C$ for a generic element of $2^{\mathcal{C}}$ (which could for instance belong to $\mathcal{U}_{i}(\eta)$) and abbreviate $\bigcup_{C \in 
\mathscr{C}} C\, (\subset \Z^d)$ as $\text{supp}(\mathscr{C})$.  For a percolation configuration $\eta \in \{0,1\}^{
\Z^d}$ such that $\{\eta=1\}\supset \mathcal V^u$, $0\leq i < \floor{ \sqrt{M}}$ and $k\in \{0,\frac12\}$, set
\begin{equation}
\label{eq:U_ijbis}
\mathcal{U}_{i+k,i+1}(\eta) \stackrel{\textnormal{def.}}{=} \{ \mathscr{C} \in \mathcal{U}_i(\eta): 
\text{supp}(\mathscr{C}) \cap V_{2(i+k)} \neq \emptyset,\, \text{supp}(\mathscr{C}) \cap V_{2(i+1)} 
=\emptyset\},
\end{equation}
and $U_{i,i+1}(\eta) \stackrel{\textnormal{def.}}{=} |\mathcal{U}_{i,i+1}(\eta)|$. Clearly, $U_{i,i+1}(\eta)$ is 
decreasing in $\eta$ and $U_{i} (\eta) = U_{i+1} (\eta) + {U}_{i,i+1}(\eta)$. We explain the benefit of introducing $ {U}_{i,i+1}(\eta)$ at the end of this section.
Recall from \eqref{eq:Ui} that $U_i=|\mathcal{U}_i(\eta_i)|$.
The key input is the following result. 
\begin{lemma}[Gluing]\label{lem:uniqreduct} 
For all $\gamma_M  \geq \Cr{c:gamma_M}$, there exist 
$C = C(\gamma_M)$ 
and $c' = c'(\gamma_M)$ such that the following holds. 
Letting
\begin{equation}\label{e:def-E}
E= A \cap \big\{ U_{j+1}> 1\vee ( U_{j}/4 + {U}_{j, j+1}(\eta_j))\big\},
\end{equation}
one has, for all $v >0$ as in the statement of Proposition~\ref{prop:uniq}, all $\delta >0$, $u \in [2\delta,  v-2\delta]$, $N \geq 1$, $0\leq j< \floor{\sqrt{M}} $ (with $M=M(N)$)
, for some $c=c(\delta,v)>0$,  
\begin{equation}
\label{eq:uniqreduct_new1}
\begin{split}
&\P[ E ] \leq C\exp \big\{- c \, e^{c (\log M)^{c'}}\big\}.
\end{split}
\end{equation}
\end{lemma}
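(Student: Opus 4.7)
The plan is to implement the strategy outlined in \S\ref{subsec-pf-outline}. First, I unpack the event $E$ algebraically. Since $\eta_{j+1}\ge\eta_j$, the relation $\sim_{\eta_{j+1}}$ is coarser than $\sim_{\eta_j}$; splitting $\mathcal U_j(\eta_j)$ into the subcollection whose support reaches $V_{2(j+1)}$ (in bijection with $\mathcal U_{j+1}(\eta_j)$) and the complementary subcollection counted by $\mathcal U_{j,j+1}(\eta_j)$, one obtains $|\mathcal U_{j+1}(\eta_j)|=U_j-U_{j,j+1}(\eta_j)$, while the coarsening gives $U_{j+1}\le|\mathcal U_{j+1}(\eta_j)|$. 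Combined with the condition $U_{j+1}>U_j/4+U_{j,j+1}(\eta_j)$ defining $E$, this forces $U_{j,j+1}(\eta_j)<U_j/8$ and ensures that a constant fraction of the equivalence classes present in $\mathcal U_{j+1}(\eta_j)$ survives the sprinkling $\eta_j\to\eta_{j+1}$ inside the annulus $V_{2j}\setminus V_{2(j+1)}$ (of thickness $2\sqrt M$). In particular, on $E$ there exist at least two distinct surviving classes $\mathscr C,\mathscr C'$, each carrying crossings of this annulus.

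Next, I reduce to the "two clusters" setup from \S\ref{subsec-pf-outline} in the spirit of \cite{MR3634283}, isolating two ambient disjoint clusters $\mathbf C\supset\mathscr C$ and $\mathbf C'\supset \mathscr C'$ of $\mathcal V^u$ whose $r$-neighborhoods, at an intermediate scale $r$ to be tuned, cover the annulus. The event $A$ provides the required abundance of large clusters to run this reduction. Applying the topological Lemma~\ref{L:goodpath_supercrit} to the $r$-neighborhoods of $\mathbf C$ and $\mathbf C'$ inside the annulus produces a $*$-path along their interface, from which I extract $K$ disjoint contact boxes $\Lambda_1,\ldots,\Lambda_K$ of radius $r$, each intersecting both $\mathbf C$ and $\mathbf C'$ in a macroscopic chunk at scale $r$. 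Tuning $r$ as a suitable power of $\sqrt M$ will yield $K\ge e^{c(\log M)^{c'}}$ for a suitable exponent $c'\in (0,1)$, which is what will ultimately deliver the double-exponential decay in \eqref{eq:uniqreduct_new1}.

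The heart of the argument, and its main obstacle, is the local gluing estimate inside a single $\Lambda_k$: conditionally on a suitable sigma-algebra capturing the configuration outside $\Lambda_k$, the conditional probability that $\mathbf C$ and $\mathbf C'$ are joined within $\Lambda_k$ in $\mathcal V^{u-\delta}$ is bounded below by a constant $p=p(v,\delta)>0$. This is where the machinery from Sections~\ref{sec:prelims}--\ref{sec:bridging} is brought together via the dynamic surgery sketched in \S\ref{subsec-pf-outline}. I first invoke Proposition~\ref{prop:general_bridge} to produce a fractal bridge $\mathbb B$ linking $\mathbf C$ to $\mathbf C'$ inside $\Lambda_k$, whose safety gaps \eqref{B1} are tuned to the polynomial buffer zones imposed by the conditional decoupling of Proposition~\ref{prop:cond_decoup}. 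I then explore the boxes of $\mathbb B$ from coarse to fine scales: in each non-hole box $B\in\mathbb B\setminus\mathbb H$ a piece of connecting path between its two marked boundary points is reconstructed in $\mathcal V^{u-\delta}$ using the polynomial connectivity lower bound \eqref{eq:twopointsbound}, with the conditional decoupling at successive scales ensuring that costs compound multiplicatively; at each bottom-scale hole $B\in\mathbb H$, the sprinkled finite energy property (Proposition~\ref{lem:finite_energy}) provides a conditionally non-degenerate probability of fully vacating $B$ at the sprinkled level. A large-deviations estimate on the count of bad boxes, in the spirit of Aizenman--Kesten--Newman, then yields the constant lower bound $p$. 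The genuine difficulty is that the approach of \cite{DCGRS20}, which freezes a globally "good" configuration a priori, is unavailable: because $\mathcal V^u$ violates the finite-energy property \eqref{eq:no-FE}, any heavy conditioning risks freezing out all randomness. The surgery must therefore proceed dynamically, discovering goodness as it goes, while the restricted form of conditional decoupling available (with polynomial buffer zones) is precisely what forces the hierarchical, multi-scale structure of $\mathbb B$.

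Finally, the per-box estimates at $\Lambda_1,\ldots,\Lambda_K$ are aggregated using the multi-box conditional decoupling (Remark~\ref{R:Caio}) to produce approximate independence, yielding an overall failure probability at most $\exp(-cpK)\le \exp\bigl(-c\exp(c(\log M)^{c'})\bigr)$, which is the bound \eqref{eq:uniqreduct_new1}.
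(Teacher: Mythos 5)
Your high-level roadmap (reduce to two crossing clusters, find contact boxes along the interface, bridge-and-surgery gluing, aggregate) is the right one and matches the paper's architecture. However, there is a genuine gap at the heart of your argument, in the local gluing estimate.

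You claim that, conditionally on the configuration outside $\Lambda_k$, the probability that $\mathbf C$ and $\mathbf C'$ are joined inside $\Lambda_k$ in $\mathcal V^{u-\delta}$ is bounded below by a \emph{constant} $p=p(v,\delta)>0$. This cannot hold, and the paper emphatically does not obtain such a bound; the actual per-box cost is of the form $\exp\{-(\log N)^{C}\}$ for a large constant $C$ proportional to the bridge parameter $\gamma$ (see the paper's \eqref{eq:decay_disconnect} and the discussion in Remark~\ref{R:supercrit-badpts}). The reasons a constant is unattainable are intrinsic to the structure you yourself invoke: the bridge contains $\asymp(\log N)^{\Cr{C:bridge_sup}}$ boxes (cf.~\eqref{B4}), each non-hole box contributes a factor $\exp\{-C(\log N)^2\}$ through the connectivity bound \eqref{eq:twopointsbound}, and each hole contributes a finite-energy factor $\exp\{-C(r\vee r_0)^{2d}\}$ with $r_0\asymp(\log N)^\gamma$ from \eqref{eq:finite_energy}. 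Multiplying these across one bridge already produces a cost $\exp\{-(\log N)^{C(\gamma)}\}$; no amount of tuning $r$ (a power of $\sqrt{M}$ or otherwise) removes this, since it stems from the \emph{number} of scales and the bottom-scale finite-energy price, not from the outer radius. Moreover, $r$ is not a free parameter here: in the paper $r=N$ and $M=M(N)$ is pinned down by \eqref{eq:def_M}, with the super-polynomial growth of $M/N$ precisely engineered so that $K\asymp\sqrt{M}/N$ compensates the polylogarithmic per-box cost. Were your constant $p$ available, the entire device of taking $M$ super-polynomial in $N$ (and the hypothesis $\gamma_M\geq \Cr{c:gamma_M}$) would be superfluous, since $(1-p)^K$ with $K$ even polynomial in $N$ would already beat the target. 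Conversely, with the true per-box cost $\exp\{-(\log N)^{C(\gamma)}\}$, one needs $K\geq e^{(\log N)^{C(\gamma)}}$, forcing $\gamma_M\geq C\gamma$. This tension is exactly the content of Remark~\ref{R:supercrit-badpts}, and your proposal does not confront it.

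A secondary remark: your "aggregation" step compresses into one sentence what the paper isolates as two separate lemmas (Lemmas~\ref{L:bound_bad} and~\ref{cor:ample_connection}) controlling the counts $|\widetilde{\mathcal K}_b|$ and $|\mathcal K_g|$, handled by a recursive two-scale decoupling scheme and a Russo-type single-box exploration inequality, respectively, and then combined via the trichotomy \eqref{eq:final-S-i-bound-trichotomy}. The AKN-type argument you gesture at is used for bounding the bad count, not for producing a constant local gluing probability. These are not merely presentational details: a naive union bound over the position of a bad contact box fails because the combinatorial complexity $M(N)^{C}$ overwhelms the single-box estimate $\exp\{-(\log N)^{c\gamma}\}$, which is again the manifestation of the non-constancy you overlooked.
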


We admit Lemma~\ref{lem:uniqreduct} for now. Its proof will span Sections~\ref{Sec:surgery}-\ref{Sec:2lemmas}. With Lemma~\ref{lem:uniqreduct} at hand, we now conclude the proof of Proposition~\ref{prop:uniq}, which follows in a straightforward manner. 
 First, we claim that for $\gamma_M \geq \Cr{c:gamma_M}$, $v, \delta, u$ as above \eqref{eq:uniqreduct_new1}, $N \geq 1$ and $0\leq i\leq \floor{\sqrt{M}} - 4$, one has
\begin{equation}
\label{eq:uniqreduct}
\P[ A\cap  \{ U_{i + 4} > 1\vee \textstyle U_i / 2 \} ] \leq C\exp\big\{- c \, e^{c (\log M)^{c'}}\big\},
\end{equation}
with identical dependence of constants on parameters as in the statement of Lemma~\ref{lem:uniqreduct}. To see this, observe that, using the formula $U_{i} (\eta) = U_{i+1} (\eta) + {U}_{i,i+1}(\eta)$ noted below \eqref{eq:U_ijbis}, choosing $\eta=\eta_i$ as in \eqref{omega_i} and iterating the last identity, it follows that
${U}_{i+4}(\eta_i) + \sum_{j: \, i \leq j < i + 4} {U}_{j,j+1}(\eta_i)  = {U}_{i }(\eta_i)=U_i,$  which implies that ${U}_{j,j+1}(\eta_i)  \leq U_i / 4$ for some $j$ with $ i \leq j < i+4$  since all quantities are non-negative. From this, \eqref{eq:uniqreduct} follows by a union bound over $j$. Indeed just notice that for fixed $i$ and all $j$ with $i \leq j < i+4$, one has  $U_{i+4} = 
U_{i+4}(\eta_{i+4}) \leq  U_{j+1}(\eta_{j+1})=U_{j+1} $ by monotonicity since $i+4 \geq j+1$, and 
that $U_i \geq U_j $ and ${U}_{j,j+1}(\eta_i) \geq {U}_{j,j+1}(\eta_j)$, for all $i \leq j$.

Next, observe that, if the event $\bigcap_{0\leq k < K} \{U_{4(k+1)} \leq 1\vee \frac{U_{4k}}{2}\}$ occurs for 
some $K \geq 1$ 
such that $4K \leq \lfloor\sqrt{M}\rfloor$, then either 
\begin{equation}
\label{eq:Ubound1}
{U}_{\lfloor\sqrt{M}\rfloor}  \leq  U_{4K} \leq 2^{-1} U_{4(K-1)} \leq \cdots \leq 2^{-K} 
U_0 = 2^{-K} |\mathcal{C}| \stackrel{\eqref{eq:definiq1}}{\leq} C  2^{-K} 
M^{d-1},
\end{equation}
or $$U_{4(k+1)} \leq 1 \:\: \text{for some }0\leq k < K.$$ In the latter case ${U}_{\lfloor\sqrt{M}\rfloor}  \leq 
U_{4(k+1)} \leq 1$ by monotonicity. Hence, letting $K = \lfloor(C' \log M / \log 4)\rfloor$ with $C'$ 
large enough in a manner depending on $d$ only so the right-hand side of \eqref{eq:Ubound1} is bounded by $1$, it follows  that the occurrence of
$A \cap \{{U}_{\lfloor\sqrt{M}\rfloor} >1\}$ implies the event $\bigcup_{0\leq k < K} A \cap  \{ U_{4(k+1)} 
\geq 1\vee U_{4k} /2 \}.$ Thus, applying a union bound over $k$ and using \eqref{eq:uniqreduct} 
yields that 
\begin{equation}
\label{eq:reduct2}
\P\big[A\cap \big\{{U}_{\lfloor\sqrt{M}\rfloor} >1\big\}\big] \leq  C K \exp\big\{- c \, e^{c (\log M)^{c'}}\}.
\end{equation}
By choice of $K$, the right-hand side of \eqref{eq:reduct2} tends to $0$ as $N \to \infty$. Feeding this into \eqref{eq:uniqueproof} and recalling the discussion above Lemma~\ref{lem:uniqreduct}, the claim \eqref{prop:uniq} immediately follows.
 \end{proof}
 
The reader may wonder with good reason why the conclusion \eqref{eq:uniqreduct_new1} of Lemma~\ref{lem:uniqreduct} concerns the more verbose event $E$ in \eqref{e:def-E} (which includes the random variable $U_{j,j+1}$) rather than just \eqref{eq:uniqreduct} for instance. The benefit of introducing $U_{j,j+1}$ is to facilitate the presence of `large interfaces', as we now explain.

For 
$0\leq j< \floor{\sqrt{M}}$, 
let 
(recall \eqref{eq:Ui}, \eqref{eq:U_ijbis} for notation)
\begin{equation}
\label{Utilde}
\widetilde{\mathcal{U}}(\eta_j) \stackrel{\textnormal{def.}}{=}
\begin{cases}
( \mathcal{U}_{j}(\eta_j) \setminus \mathcal{U}_{j,j+1}(\eta_j)) \cup\{\widetilde{\mathscr{C}}\}, & \text{ if } \mathcal{U}_{j+\frac12,j+1}(\eta_j) \ne \emptyset, \\
\mathcal{U}_{j}(\eta_j) \setminus \mathcal{U}_{j,j+1}(\eta_j), & \text{ 
otherwise, } 
\end{cases}
\end{equation}
where $\widetilde{\mathscr{C}} \stackrel{\textnormal{def.}}{=} \{ C : C\in \mathscr{C} \text{ for some } 
\mathscr{C} \in \mathcal{U}_{j, j+1}(\eta_j) \}$. 
We 
will no longer carry the argument $\eta_j$ in our subsequent discussion. By the same reasoning that was 
used to verify \cite[(4.19)--(4.20)]{DCGRS20}, the following 
properties of $\widetilde{\mathcal{U}}$ hold on the event $E$ in \eqref{e:def-E}:
 	\begin{align}
 		&\begin{array}{l}
 			\text{for $A_j\in \{V_{2j}\setminus V_{2j+1}, V_{2j+1}\setminus V_{2j+2}\}$, with $A_j$ a deterministic}\\\text{function of $\widetilde{\mathcal{U}}$, each of the sets $\text{supp}(\mathscr{C})$ for $\mathscr{C} \in \widetilde{\mathcal{U}}$ crosses $A_j$} \\
 			\text{and their union intersects all the boxes of radius 
 				$10N$ contained in $A_j$;}\\
 		\end{array} \label{Uconnect}\\
 		&\begin{array}{l}
 			\text{there exists a non-trivial partition $\widetilde{\mathcal{U}}= \widetilde{\mathcal{U}}_1 \sqcup 
 			\widetilde{\mathcal{U}}_2$ such that}\\\text{$\displaystyle 
 			\{\nlr{}{\eta_{j+1}}{{\mathcal S}_1}{{\mathcal{S}}_2}\}$ and $|\{ \mathscr{C} : \mathscr{C} \in 
 			\widetilde{\mathcal{U}}_1  \}| \leq 4$, where $ \textstyle {\mathcal{S}}_i = \bigcup_{\mathscr{C}\in 
 		\widetilde{\mathcal{U}}_i}\text{supp}(\mathscr{C})$. 	} 
 		\end{array} \label{Upartition}
 	\end{align}
These last two facts will be used at the start of the next section, in order to exhibit many different `contact areas,' which are well-separated boxes in which the gluing will be attempted.

\section{Gluing clusters}
\label{Sec:surgery}

The remainder of this article, i.e.~the present section and the next, is devoted to proving Lemma~\ref{lem:uniqreduct}. The proof will bring into play the various ingredients derived  in Sections~\ref{sec:sfe} and \ref{sec:bridging}. The structure of the argument is best explained after a straightforward initial step; see below \eqref{eq:repeated_encounter}. The current section contains the full proof of Lemma~\ref{lem:uniqreduct}, save for two results, Lemmas~\ref{L:bound_bad} and~\ref{cor:ample_connection}, which control the number of certain counts of good ($\mathcal K_g$) and bad ($ \widetilde{\mathcal K}_b$) boxes. These two lemmas are proved separately in Section~\ref{Sec:surgery}.

\begin{proof}[Proof of Lemma~\ref{lem:uniqreduct}]

We start by decomposing the event $E$ in \eqref{e:def-E} by conditioning on all possible realizations $\{\mathscr{C}\}$ of 
$\widetilde{\mathcal{U}}$ defined in \eqref{Utilde}. Applying a union bound over the partition $\{ \mathscr{C}\} = \{ \mathscr{C}\}_1 \sqcup \{ \mathscr{C}\}_2$ 
where $\{ \mathscr{C}\}_i$ corresponds to the realization of $\widetilde{\mathcal{U}}_i$ from \eqref{Upartition} on the event $\{ 
\widetilde{\mathcal{U}} =\{ \mathscr{C}\}\}$, we get, for $S_i 
= \bigcup_{\mathscr C \in \{\mathscr C\}_i} {\rm supp}(\mathscr C)$ the induced realization of $\mathcal{S}_i$, writing $\P_{\mathscr 
C}[\cdot]=\P[ \, \cdot \, | \, \widetilde{\mathcal U} = \{\mathscr C\}]$ whenever $\P[ \, \widetilde{\mathcal U} = \{\mathscr C\}]>0$,
\begin{equation}
\label{eq:Ai1}
\begin{split}
\P[E] \leq \sum_{\{\mathscr C\}} \P\big[\,\widetilde{\mathcal U} = \{\mathscr C\}\big] \times 
\sum_{\substack{\{\mathscr C\} = \{\mathscr C\}_1 \sqcup \{\mathscr C\}_2\\ |\{\mathscr C\}_1| \le 4}}\, 
\P_{\mathscr C}\Big[ \nlr{A_j}{\mathcal{V}^{u-\delta}}{S_1}{S_2} \Big];
\end{split}
\end{equation}
here the first sum is over realizations $\{\mathscr C\}$ such that $\{\widetilde {\mathcal U} = \{\mathscr C\}\} \cap E \ne \emptyset.$

We will eventually control the connection probabilities appearing on the right-hand side of \eqref{eq:Ai1} individually, but this will take a while; cf.~\eqref{e:PE-final} for the final bound obtained. Accordingly we implicitly assume from here on that the event $\{\widetilde{\mathcal U} =  \{\mathscr C\}\}$ occurs, with $\widetilde{\mathcal U}$ satisfying \eqref{Uconnect}-\eqref{Upartition}, and we consider a given non-trivial partition $\{\mathscr C\}_{1, 2} \stackrel{{\rm def}.}{=} (\{\mathscr C\}_1, 
\{\mathscr C\}_2)$ of $\{\mathscr C\}$ corresponding to the realization of $(\widetilde{\mathcal{U}}_1,\widetilde{\mathcal{U}}_2)$ in \eqref{Upartition}.

Towards bounding the relevant connection probability in \eqref{eq:Ai1}, we first apply
Lemma~\ref{L:goodpath_supercrit} to extract for each $0\leq j< \floor{\sqrt{M}} $ a coarse-grained  $*$-crossing of $ 
A_j$ that lies close to both $S_1$ and $S_2$ (with $S_i 
= \bigcup_{\mathscr C \in \{\mathscr C\}_i} {\rm supp}(\mathscr C)$), as follows.  
 Consider the (rescaled) coarse-grainings of $S_1$ and $S_2$ given by
 \begin{equation*}
\begin{split}
&U \stackrel{{\rm def.}}{=}  \frac{1}{10N} \{ x \in 10N\,\Z^d : B_{10N}(x) \cap S_1 \neq \emptyset \} \mbox{ and}\\
&V \stackrel{{\rm def.}}{=} \frac{1}{10N} \{ x \in 10N\,\Z^d : B_{10N}(x) \cap S_2 \neq \emptyset \}.
\end{split}
\end{equation*}
Owing to \eqref{Uconnect}, Lemma~\ref{L:goodpath_supercrit} applies and 
yields a $*$-path $\pi$ satisfying \eqref{eq:cond_path} and connecting 
\begin{equation*}
\begin{split}
V_{{\rm out}, 2j + 2} &\stackrel{{\rm def.}}{=} \frac1{10N}\{x \in 10N \Z^d : B_{10N}(x) \cap B_{4M - (2j 
+ 2)\sqrt{M} + 100N} \ne \emptyset\}	\text{ and}\\
V_{{\rm in}, 2j + 1} &\stackrel{{\rm def.}}{=} \frac1{10N}\{x \in 10N \Z^d : B_{10N}(x) \cap B_{4 M - (2j + 1)\sqrt{M} - 100N} =\emptyset\}
\end{split}	
\end{equation*}
if $A_j = V_{2j + 1} \setminus V_{2j + 2}$, and connecting $V_{{\rm out}, 2j+ 1}$ and $V_{{\rm 
in}, 2j}$ otherwise. Together, 
\eqref{eq:cond_path} and the $*$-connectivity imply that one can 
 extract from $\pi$ a family of boxes $\Lambda=\{ \Lambda_k : 1\leq k \leq K\}$, where $K = \lceil \frac{ \sqrt{M} }{400 N}\rceil$ and $\Lambda_k=
B_{20N}(x_{k})$ with $x_k \in 10N \Z^d$, for all $1 \le k \le K$, 
satisfying the following properties whenever $N \geq C$ and $\gamma_M \ge 2$: 
\begin{align}
&\label{prop:a} \text{$\L_k$ intersects both $S_1$ and $S_2$ and 
$\widetilde{\Lambda}_k \stackrel{{\rm def.}}{=} B_{25N}(x_{k}) \subset A_j$ 
for each $k$, and}\\
&\label{prop:c} \text{$|x_{k}- x_{k + 1}|_\infty \leq 200N \,\, \text{whereas } \, |x_{k}- x_{k'}|_\infty \geq 100N \text{ for all }  1 \leq k \neq k'  \leq K.$}
\end{align}	
Then clearly, by \eqref{prop:a} and monotonicity,
\begin{equation}\label{eq:repeated_encounter}
\P_{\mathscr C}\Big[ \nlr{A_j}{\mathcal{V}^{u-\delta}}{S_1}{S_2} \Big] \leq  
\P_{\mathscr C}\Big[  \bigcap_{k = 1}^K \{\nlr{\widetilde{\Lambda}_k}{\mathcal{V}^{u-\delta}}{S_1}{S_2}\} \Big].
\end{equation}
We stress that the collection $\Lambda = \Lambda(\{\mathscr C\}_{1, 2})$ is a deterministic function of $\{\mathscr C\}_{1, 2}$, realization of a partition $(\widetilde{\mathcal{U}}_1,\widetilde{\mathcal{U}}_2)$ of $\widetilde{\mathcal U}$ satisfying \eqref{Upartition}. The remainder of the proof applies uniformly in $j$ over $0\leq j< \floor{\sqrt{M}} $, which will be assumed implicitly to keep notations reasonable.

\bigskip

Let us briefly pause to describe how the argument unfolds. In order to bound the right-hand side of \eqref{eq:repeated_encounter}, we will first use the deterministic construction from Section~\ref{sec:bridging} to build a {\em bridge} between $S_1$ and $S_2$ inside {\em each} box $\widetilde \L_k$. Each bridge
consists of a chain of boxes at different scales, organized in a hierarchical fashion and satisfying suitable separation properties; see \eqref{B1}-\eqref{B4}. These bridges should be thought of as `efficient' pathways 
linking $S_1$ and $S_2$ inside each $\widetilde \L_k$. Their existence is guaranteed by Proposition~\ref{prop:general_bridge}. Clearly, the right hand side in \eqref{eq:repeated_encounter} is bounded from above by requiring that none of the bridges contains a connection between $S_1$ and $S_2$ in $\mathcal V^{u - \delta}$.

The method for bounding (from below) the probability that the bridge inside some 
$\widetilde \L_k$ connects $S_1$ and $S_2$ in 
$\mathcal V^{u - \delta} \cap \widetilde \L_k$ will make 
great use of so-called {\em good} 
boxes (in the bridge), which come in two types. They either ensure a small but reasonable probability for the existence of crossings inside them either via 
a conditional decoupling property (cf.~Proposition~\ref{prop:cond_decoup}) or via the `sprinkled finite-energy' property exhibited in Section~\ref{sec:sfe}. 

\bigskip

We now introduce the relevant notions of `decoupling-goodness' and `finite-energy goodness' that will play a central role here. 
Recall the event $\mathcal G_{B}^{\cdot, \cdot }$ from the statement of Proposition~\ref{prop:cond_decoup}, see above \eqref{eq:corcond_decouple}
A box $B = B(x,r)$ is said to be {\em decoupling-good (with parameters $(u, \delta)$)} 
if 
\begin{equation}\label{e:-dec-good}
G_B = \mathcal G_B^{u, \delta u^{-1}}
\end{equation}
occurs. The relevant notion of finite-energy goodness is provided by Proposition~\ref{lem:finite_energy}.
The box $B=B(x,r)$ is called \textit{finite-energy good (with parameters $(u,\delta, r_0)$)} if an event $\widetilde{F}_B$ with the properties postulated by Proposition~\ref{lem:finite_energy} occurs (for instance, \eqref{def:mathscrE-concrete} to be concrete).

We are now ready to introduce the bridge $\mathbb B^k$ corresponding to $\Lambda_k$ and the associated notion of good box. Let $\xi = 1 - \Cr{c:box_gap}$ (see Proposition~\ref{prop:cond_decoup}), $\Cr{C:bridge_sup} = m(\xi)$ as in Proposition~\ref{prop:general_bridge} and for a parameter $\gamma > 10$  to be fixed later, set
\begin{equation}\label{eq:gamma}{r_0}= 
\lfloor (\log N)^{\gamma}\rfloor.
\end{equation}
The parameter $r_0$ is henceforth fixed to have the value chosen in \eqref{eq:gamma}.
Applying Proposition~\ref{prop:general_bridge} with $(N,L)$ in \eqref{eq:def-tube} chosen as $(0, 20N)$, for each $1 \le k \le K$ we define $\mathbb B^k 
= \bigcup_{1 \le j \le J_k} \mathbb B_{j}^k$ to be a bridge associated to the septuple 
$(\L_k \cap S_1, \L_k \cap S_2$, $(6400)^4(\log N)^{4 \gamma}$, $32(\log N)^{\gamma}$, $\Cl{C:bridge_sup}$, $\xi, \L_k)$; for definiteness, if several bridges exist we choose the smallest one according to some deterministic ordering. We then introduce
\begin{multline}
\label{e:supercrit-good}
\{  \text{$\L_k$ is good}\} =\\ \big\{ \text{the 
boxes in $ \textstyle \mathbb B_{-}^k \stackrel{{\rm def}.}{=} \bigcup_{1 \le j < J_k} \mathbb B_j^k$ are decoupling-good with parameters~$(u,\delta)$}\big\},
\end{multline}
as well as
\begin{equation}\label{def:mathscrE}
\widetilde{F}_k= \big\{\text{the boxes in $\mathbb B_{J_k}^k$ are finite-energy good with parameters~$u, \delta$ and ${r_0}$}\big\},
\end{equation}
(with $r_0$ as in \eqref{eq:gamma}), and with bad referring to the complement of good in \eqref{e:supercrit-good}, define the random sets 
\begin{align}
&\label{e:K_b}
\mathcal K_b = \mathcal K_b(\{\mathscr C\}_{1,2}) = \big\{ k \in [K] \stackrel{{\rm def}.}{=} \{1, \ldots, K\} : \ \Lambda_k \text{ is bad}\big\},\\
&\label{e:K_b-tilde}
\widetilde{\mathcal K}_b = \widetilde{\mathcal K}_b(\{\mathscr C\}_{1,2}) = \big\{ k \in [K] : \ \Lambda_k \text{ is bad or } \widetilde{F}_k^c \text{ occurs}\big\}.
\end{align}
We reiterate at this 
point, as the notation in \eqref{e:K_b}-\eqref{e:K_b-tilde} suggests, that the collection of boxes $\Lambda = \Lambda(\{\mathscr C\}_{1,2})$, cf.~below \eqref{eq:repeated_encounter}, and with it the bridges $\{\mathbb B^k : 1 \le k \le K\}$ as well as the sets $\mathcal K_b$, $\widetilde{\mathcal K}_b$, 
are all implicitly functions of $\{\mathscr C\}_{1, 2} = (\{\mathscr C\}_1, \{\mathscr 
C\}_2)$, realization of a partition of $\widetilde {\mathcal U}$ satisfying 
\eqref{Upartition} on the event that $\{ \widetilde {\mathcal U}= \{\mathscr C\} \}$ with $\{\mathscr C\}$ such that $\{\widetilde {\mathcal U} = \{\mathscr C\}\} \, \cap \, E \ne \emptyset$, cf.~below \eqref{eq:Ai1}. Thus, with a slight abuse of notation, $\mathcal K_b$ (and similarly, $\widetilde{\mathcal K}_b$) is declared under both $\P_{\mathscr{C}}$ and $\P[ \cdot \cap E]$ and $\mathcal K_b$ is in the latter case
a measurable function of a (non-trivial) partition of $\widetilde{\mathcal U}$ satisfying \eqref{Upartition}, which counts the number of bad boxes associated to that partition. 
The following result controls $|\widetilde{\mathcal K}_b|$. For a real number $b \geq 1$, it will be convenient to abbreviate $\beta = 2^{-\lfloor (\log N)^{b}\rfloor}$ in the sequel.

\begin{lem}[$|\widetilde{\mathcal K}_b|$ is small]
\label{L:bound_bad}
There exists 
$\Cl{C:gamma_low_bnd1} \geq 10$ such that for any $b \ge 1$ and 
$\gamma \ge \Cr{C:gamma_low_bnd1} b$, the following holds. 
For all $\gamma_M \ge 200 b$, $v, \delta >0$, $u \in [2\delta,  v-2\delta]$ and $N \geq 1$,
\begin{equation}
	\label{eq:sumstep_bad}
		\P\big[ \big\{ |\widetilde{\mathcal K}_b| \ge 4K \beta \text{ for a (non-trivial) partition of } \widetilde{\mathcal U}\big\} \cap E\big] \leq  
		C \exp\big\{- ce^{c(\log N)^{8b}}\big\}, 
\end{equation}
for some $c=c(v,\delta)$ and $C = C(\gamma_M)$.
\end{lem}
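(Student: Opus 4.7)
I will prove \eqref{eq:sumstep_bad} in three steps: (a) a per-box stretched-exponential tail estimate via Propositions~\ref{prop:cond_decoup} and~\ref{lem:finite_energy}; (b) a Chernoff-type concentration across the $K \sim \sqrt M/N$ well-separated boxes $\Lambda_k$; and (c) a cheap union bound over partitions of $\widetilde{\mathcal U}$. The main technical obstacle is achieving a joint decoupling across the $\Lambda_k$'s despite the fact that the bad events are not monotone in $\mathcal V^u$.

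\textit{Per-box bound.} Fix a partition $\{\mathscr C\}_{1,2}$; this deterministically fixes $\Lambda$ and the bridges $\mathbb B^k$. By \eqref{B4}, $|\mathbb B^k| \leq C(\log N)^{\Cr{C:bridge_sup}+1}$. Each non-hole box $B(x,r) \in \mathbb B_-^k$ has $r \geq s' = 32(\log N)^\gamma$, so Proposition~\ref{prop:cond_decoup}(ii) yields $\P[G_B^c] \leq e^{-c u (\delta/u)^2 r^{\Cr{c:box_gap}}} \leq e^{-c'(\log N)^{\Cr{c:box_gap}\gamma}}$ with $c' = c'(v,\delta) > 0$. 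For each hole $B \in \mathbb B_{J_k}^k$ (radius $\leq s = (6400)^4(\log N)^{4\gamma}$), the inclusion $\widehat F_B \subset \widetilde F_B$ of Remark~\ref{R:FE-implication-event} together with \eqref{e:F_B-proba-boosted} applied with $r_0 = \lfloor(\log N)^\gamma\rfloor$ give $\P[\widetilde F_B^c] \leq \P[\widehat F_B^c] \leq C(\log N)^{4d\gamma} e^{-c(\log N)^{c\gamma}}$. Summing over the $\leq C(\log N)^{\Cr{C:bridge_sup}+1}$ boxes in $\mathbb B^k$ then yields
\begin{equation*}
\P[\Lambda_k \in \widetilde{\mathcal K}_b] \leq e^{-c(\log N)^{c\gamma}}
\end{equation*}
for a (slightly smaller) $c = c(v,\delta) > 0$.

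\textit{Decoupling and Chernoff.} Let $Y_k = \mathbf 1\{\Lambda_k \in \widetilde{\mathcal K}_b\}$ and $\widehat\Lambda_k = B(x_k, 30N) \supset \Lambda_k$; by \eqref{prop:c} the $\widehat\Lambda_k$'s are pairwise disjoint. After replacing $\widetilde F_B$ by its subset $\widehat F_B$, each $Y_k$ is a function of the excursions of $\omega$ between small sub-boxes of $\Lambda_k$ and their surroundings, all localized inside $\widehat\Lambda_k$. I will apply the multi-parameter conditional decoupling \eqref{eq:mul_excursion_sandwich} simultaneously to the $\widehat\Lambda_k$'s at scale $\sim N$, which sandwiches the joint configuration law between two product laws at sprinkled levels $u(1\pm\delta')$ (with $\delta' = \delta'(v,\delta) > 0$), at total cost $\leq Ke^{-cN^c}$. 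The two-sided sprinkling is crucial: it provides both upper and lower product bounds, handling the non-monotonicity of $Y_k$. Chernoff then gives
\begin{equation*}
\P\Bigl[\textstyle\sum_k Y_k \geq 4K\beta\Bigr] \leq \binom{K}{4K\beta}\bigl(e^{-c(\log N)^{c\gamma}}\bigr)^{4K\beta} + Ke^{-cN^c}.
\end{equation*}
Choosing $\gamma \geq \Cr{C:gamma_low_bnd1} b$ so that $c\gamma \geq 2b$, one absorbs $\binom{K}{4K\beta} \leq \exp(4K\beta(\log N)^b \log 2)$, and the first term becomes $\leq \exp(-cK\beta(\log N)^{c\gamma})$. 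Using $K \geq c\sqrt M/N$ together with $\log M \geq (\log N)^{200b}$ (from $\gamma_M \geq 200b$), one checks $K\beta \geq \exp((\log N)^{100b})$ for $N \geq C$, so the whole expression is at most $\exp(-e^{(\log N)^{50b}})$.

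\textit{Union over partitions and main obstacle.} By \eqref{Upartition}, a partition is determined by the choice of $\widetilde{\mathcal U}_1$ with $|\widetilde{\mathcal U}_1| \leq 4$, of which there are at most $|\mathcal C|^4 \leq (CM^{d-1})^4$. Its logarithm $\leq C(\log N)^{\gamma_M}$ is polynomial in $\log N$, while $e^{(\log N)^{8b}}$ grows super-polynomially, so for $N \geq C$ this factor is absorbed into the constants, giving the stated bound $C\exp\{-ce^{c(\log N)^{8b}}\}$. The principal obstacle, as emphasized, is the joint decoupling in the previous step: since $G_B$ and $\widehat F_B$ are not simultaneously monotone in $\mathcal V^u$, classical single-sprinkling Sznitman-type decouplings for increasing events do not suffice. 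The two-sided sandwich of \eqref{eq:mul_excursion_sandwich} is tailor-made for this; the bookkeeping is delicate but feasible because the sprinkling cost $e^{-cN^c}$ is astronomically smaller than the Chernoff term $\exp(-K\beta(\log N)^{c\gamma})$, permitting a generous choice of $\delta'$.
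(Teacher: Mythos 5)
Your per-box tail bound and the Chernoff/combinatorial bookkeeping are in the right spirit, but the decoupling step is where the argument breaks down quantitatively. You propose a single joint decoupling across the $K$ boxes $\widehat\Lambda_k$ at scale $r\asymp N$, and the sandwich/conditioning cost of Proposition~\ref{prop:cond_decoup} (or any sprinkled decoupling) between boxes separated by $\asymp N$ is of order $\exp\{-c N^{\Cr{c:box_gap}}\}$, leaving a final additive error of size at least $\exp\{-cN^{c}\}$. Your claim that this is ``astronomically smaller than the Chernoff term'' has the comparison backwards relative to the \emph{target}: the bound you must prove is $C\exp\{-c\,e^{c(\log N)^{8b}}\}$, and for $b\ge1$ one has $(\log N)^{8b}\gg \log N$, hence $e^{c(\log N)^{8b}}\gg N^{c}$, so $\exp\{-c\,e^{c(\log N)^{8b}}\}\ll \exp\{-cN^c\}$. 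In other words the single‐scale‐$N$ decoupling error swamps the conclusion. (A secondary issue: there is no multi-box version of Proposition~\ref{prop:cond_decoup} in the paper; Remark~\ref{R:Caio}.\ref{rem:caio_multiple} extends to several levels $u$ but not to several boxes, so you would additionally need to justify conditioning on the union of clotheslines and obtaining a sandwich for each.)

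This is precisely the obstacle that motivates the paper's proof. After extracting a densely‐packed subset of bad indices (Claim~\ref{Cl1-supercrit}), the paper runs a dyadic recursion in scale: at step $\ell\to\ell-\lambda$, a large count of bad boxes in a block of side $2^\ell\cdot 10N$ forces either one sub-block with a slightly enhanced count or two well-separated sub-blocks both carrying a sizeable count, and only the latter is decoupled, at scale $2^{\ell-\lambda}\cdot 10N$ (see \eqref{eq:Erqinclusion}--\eqref{eq:partial_diff1}). Crucially the recursion halts at $\ell_n\geq \Cr{c:quad_frac}\ell_0$ (by \eqref{e:l_n large}, with $\ell_0\approx 2(\log N)^{10b}$), so every decoupling throughout the induction operates at scale $\gtrsim 2^{c\ell_0}N\approx e^{c(\log N)^{10b}}N$ and incurs error $\exp\{-c\,e^{c'(\log N)^{10b}}\}$, which \emph{is} negligible against $\exp\{-c\,e^{c(\log N)^{8b}}\}$. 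The multiplicative gain needed to overwhelm the $\binom{K}{4K\beta}$-type entropy then comes from accumulating enough ``quadratic'' branchings (quantified by the constraint $q_{m,n}>d\ell_n\Rightarrow h=0$, giving $m\ge\Cr{c:quad_frac}n$ via \eqref{eq:bndqmn}), rather than from a flat product over the $K$ boxes. So the student's plan is not a different route to the same bound; the decoupling scale must be hierarchical for the estimate to close.
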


Observe that $|\mathcal K_b| \leq |\widetilde{\mathcal K}_b|$, which is thus also controlled by \eqref{eq:sumstep_bad}. The proof of Lemma~\ref{L:bound_bad} would take us too far on a tangent and 
is postponed to \S\ref{subsec:proof_bound_bad}. It is worth pointing out that a naive union bound will not produce any useful estimate, see Remark~\ref{R:supercrit-badpts} for more on this. 

The next lemma supplies a complementary result to \eqref{eq:sumstep_bad} for a certain count of good indices, subset of $[K] \setminus \mathcal K_b$ in \eqref{e:K_b}, which we now introduce. We return to the `quenched' framework under $\P_{\mathscr{C}}$, with a given partition $\{\mathscr C\}_{1, 2}$ of $\mathscr C$ and the latter such that $\{\widetilde {\mathcal U} = \{\mathscr C\}\} \cap E \ne \emptyset.$
For any $k \in [K]$, $1 \le j < J_k$, and $B \in \mathbb 
B_{-}^k$ (see \eqref{e:supercrit-good} for notation), consider the connection events  
\begin{equation}\label{def:mthcalEB}
\begin{split}
&E_B = \big\{\lr{}{\mathcal V^u \cap B}{z_{1, B}}{z_{2, B}}\big\},\\
&E_{j, k} = \bigcap_{B \in \mathbb B^k_{j}} E_B, \   E_k =	\bigcap_{1 \le j < J_k} E_{j, k},
\end{split}
\end{equation}
where $z_{1, B}$ and $z_{2, B}$ in the definition of $E_B$ are the 
two marked vertices of $B$ declared by property~\eqref{B2} of the bridge $\mathbb{B}^k$. We write $\mathcal K_g = \mathcal K_g(\{\mathscr C\}_{1,2}) \subset [K] 
\setminus \mathcal K_b$ for the subset of indices corresponding to good boxes $\L_k$, i.e.~satisfying \eqref{e:supercrit-good}, and such that in addition, the event $E_k$ occurs. That is, 
\begin{equation}
\label{e:K_g}
\mathcal K_g = \mathcal K_g(\{\mathscr C\}_{1,2}) = \{ k \in [K]  : \ \Lambda_k \text{ is good and $E_k$ occurs}\};
\end{equation}
cf.~\eqref{e:K_b}. In plain words, and on account of \eqref{B2}, indices in $\mathcal K_g$ correspond to boxes $\L_k$ where the sets $S_1 \cap 
\L_k$ and $S_2 \cap \L_k$ are connected to each other through $\L_k \cap \mathcal V^u$ {\em except} for 
the boxes in $\mathbb B_{J_k}^k$ (referred to as {\em holes} in Section~\ref{sec:bridging}, see below \eqref{eq:def-tube}). 


\begin{lem}[$\mathcal K_g$ is large]\label{cor:ample_connection}
For all $\gamma > 10$, $b \ge 2 \Cr{explore_bad}$, $v>0$ as in the statement of Proposition~\ref{prop:uniq}, $\delta >0$, $u \in [2\delta,  v-2\delta]$ and $N \ge C(\delta,v)$,
	\begin{align}
		\label{eq:sumstep}
		\P_{\mathscr C}\big[ |\mathcal{K}_g| \leq 8K \beta\big] \leq 2^{-cK \beta} + 8K \, \P_{\mathscr C}\big[ |\mathcal K_b|  \ge 4K \beta\big].
	\end{align}
\end{lem}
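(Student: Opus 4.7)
The plan is to split the event according to whether $|\mathcal K_b|$ is large, and in the complementary regime to exhibit a uniform conditional lower bound on the success probability at each good box, which then feeds into a Chernoff-type estimate. More precisely, write
\[
\P_{\mathscr C}[|\mathcal K_g| \leq 8K\beta] \leq \P_{\mathscr C}[|\mathcal K_b| \geq 4K\beta] + \P_{\mathscr C}[|\mathcal K_g| \leq 8K\beta, \, |\mathcal K_b| < 4K\beta],
\]
the first summand being already of the form appearing on the right-hand side of \eqref{eq:sumstep}. On $\{|\mathcal K_b| < 4K\beta\}$ there are at least $K - 4K\beta \geq K/2$ good indices, so it suffices to bound the second term by $2^{-cK\beta}$ by showing that at most $8K\beta$ of these can fail $E_k$.

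The core of the argument is to exhibit a filtration $(\mathcal F_k)_{k=1}^K$ and a parameter $p \geq e^{-C(\log N)^{C'}}$ such that
\[
\P_{\mathscr C}[\{k \in \mathcal K_g\} \mid \mathcal F_k] \geq p \cdot \mathbf 1_{\{\Lambda_k \text{ good}\}}.
\]
The filtration $\mathcal F_k$ reveals, for each $k'<k$, the goodness and success of $\Lambda_{k'}$; crucially, the enlarged boxes $\widetilde\Lambda_{k'}$ are pairwise disjoint by \eqref{prop:c} and the bridge boxes within each $\Lambda_k$ enjoy the polynomial safety gaps of \eqref{B1}, so the configuration in $\Lambda_k$ retains enough randomness to reconstruct $E_k$. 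On $\{\Lambda_k \text{ good}\}$, every $B \in \mathbb B^k_-$ satisfies the decoupling-goodness event $G_B = \mathcal G_B^{u, \delta u^{-1}}$ of \eqref{e:-dec-good}, so \eqref{def:GBudelta} applied to the increasing indicator $\mathbf 1_{E_B}$ gives
\[
\P_{\mathscr C}[E_B \mid \mathcal C_u^B] \geq \P[E_B \text{ under } \mathcal V^{u+\delta}] - C e^{-c r_B^c}.
\]
The unconditional probability on the right is bounded below by Lemma~\ref{lem:twopointsbound} at level $u+\delta \leq v-\delta$, yielding $e^{-C(\log r_B)^2}$ (its hypothesis \eqref{eq:cond-u-twopointsbound} is available at this level via an FKG argument starting from the disconnection assumption on $v$). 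Iterating over the at most $|\mathbb B^k_-| \leq C(\log N)^C$ boxes of the bridge, by successively revealing excursion structures from the outside in and exploiting the safety gaps together with the bottom-scale choice $r_0 = \lfloor (\log N)^\gamma \rfloor$ of \eqref{eq:gamma} to keep the decoupling errors super-polynomially small at each scale, produces the claimed $p$.

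With this conditional bound in hand, the indicators $(\mathbf 1_{\{k \in \mathcal K_g\}})_{k=1}^K$ stochastically dominate, on $\{|\mathcal K_b| < 4K\beta\}$, at least $K/2$ conditionally independent $\mathrm{Bernoulli}(p)$ variables. Choosing $b \geq 2\Cr{explore_bad}$ with $\Cr{explore_bad}$ larger than the constant $C'$ above guarantees $p \geq 32 \beta$, so a standard Chernoff estimate yields
\[
\P_{\mathscr C}[|\mathcal K_g| \leq 8K\beta, \, |\mathcal K_b| < 4K\beta] \leq e^{-cKp} \leq 2^{-cK\beta},
\]
and the prefactor $8K$ in \eqref{eq:sumstep} is the room needed to absorb a union bound over the $K$ applications of the multi-box conditional decoupling of Remark~\ref{R:Caio} underlying the construction of $\mathcal F_k$. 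The main obstacle will be chaining the conditional decoupling across all scales of each bridge: one must order the revelations so that the polynomial buffer zones required by Proposition~\ref{prop:cond_decoup} remain pairwise disjoint, which is precisely why the exponent $\xi$ was matched to $1 - \Cr{c:box_gap}$ and why $r_0$ was taken super-polylogarithmic in $N$.
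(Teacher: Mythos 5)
Your high-level picture—split on $\{|\mathcal K_b|\geq 4K\beta\}$, then on the complement get a per-box conditional lower bound and run a concentration estimate—is the right shape, but the core conditional step is not well-posed as stated, and this conceals a genuine gap that the paper's argument is designed to close.

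The proposed bound $\P_{\mathscr C}[\{k\in\mathcal K_g\}\mid\mathcal F_k]\geq p\cdot\mathbf 1_{\{\Lambda_k\text{ good}\}}$ requires the event $\{\Lambda_k\text{ good}\}$ to be $\mathcal F_k$-measurable, but your $\mathcal F_k$ only reveals goodness and success for $k'<k$; whether $\Lambda_k$ itself is good is \emph{not} known at that stage, so the right-hand side is not $\mathcal F_k$-measurable and the inequality has no meaning as a conditional-expectation statement. Relatedly, you want to run the Chernoff estimate conditionally on $\{|\mathcal K_b|<4K\beta\}$, but this event depends on the goodness of \emph{all} boxes including those with index $>k$, so it is not adapted to your exploration filtration; the conditional decoupling input (Proposition~\ref{prop:cond_decoup}) controls conditioning on the clothesline $\mathcal C_u$ of a fixed box, not conditioning on arbitrary future information about the global goodness profile. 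Asserting that the indicators $\mathbf 1_{\{k\in\mathcal K_g\}}$ "stochastically dominate $K/2$ conditionally independent $\mathrm{Bernoulli}(p)$ variables on $\{|\mathcal K_b|<4K\beta\}$'' is therefore not justified: one cannot know in advance which $K/2$ indices are good, and conditioning on the global bad-count destroys the conditional-independence structure you need.

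The paper avoids both problems by working at the level of the joint law of $(\mathcal K_g,\mathcal K_b)$. Lemma~\ref{L:explore_bad} proves a recursive inequality
\[
\P_{\mathscr C}[\mathcal K_g=K_g\cup\{k\},\,\mathcal K_b=K_b]\;\geq\; D_0^{-1}\,\P_{\mathscr C}[\mathcal K_g=K_g,\,\mathcal K_b=K_b]\;-\;D_1\,\P_{\mathscr C}[\mathcal K_g=K_g,\,\mathcal K_b=K_b\cup\{k\}],
\]
where the subtraction term is exactly the device that accounts for the possibility that $\Lambda_k$ turns out to be bad once examined—i.e.\ what your $\mathbf 1_{\{\Lambda_k\text{ good}\}}$ was meant to encode but cannot, given the measurability constraints. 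The fixed conditioning events $H=\{\mathcal K_g\cap([K]\setminus\{k\})=K_g,\ \mathcal K_b\cap([K]\setminus\{k\})=K_b\}$ are carefully shown to be measurable relative to the $\sigma$-algebra generated by the field outside a thickened bridge box using \eqref{B1} and \eqref{prop:c}, so the conditional decoupling from Proposition~\ref{prop:cond_decoup}, combined with the connectivity lower bound from Lemma~\ref{lem:twopointsbound} at level $u+\delta\leq v-\delta$ (this part you have right), applies cleanly. Lemma~\ref{cor:ample_connection} then follows by summing \eqref{eq:1step} over the admissible $(K_g,K_b,k)$ and iterating the resulting one-step inequality, which is a recursive counting argument rather than a direct Chernoff bound conditioned on a non-adapted event. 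If you wish to salvage a Chernoff-type route, you would first need to formulate and prove something essentially equivalent to Lemma~\ref{L:explore_bad}, at which point the paper's route is the more economical way to conclude.

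Two smaller points: your explanation of the $8K$ prefactor (a union bound over applications of multi-box decoupling) does not match where it actually comes from—it arises from summing the iterated one-step inequality over $0\le m<2n$ with $n=\lfloor 4K\beta\rfloor$; and the constant $\Cr{explore_bad}$ is defined in and supplied by the one-step Lemma~\ref{L:explore_bad}, so invoking $b\geq 2\Cr{explore_bad}$ without establishing that lemma is circular.
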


Lemma~\ref{cor:ample_connection} is proved in \S\ref{sec:K_g}. With Lemmas~\ref{L:bound_bad} and~\ref{cor:ample_connection} at our disposal, we are in a position to conclude the proof of Lemma~\ref{lem:uniqreduct}. We first choose $b = 2(\Cr{explore_bad} \vee 10)$, 
$\gamma = \Cr{C:gamma_low_bnd1}b$ and assume that $\gamma_M \ge 200b$ so that the conclusions of 
Lemmas~\ref{L:bound_bad} and \ref{cor:ample_connection} hold for all $v>0$ as in the statement of Proposition~\ref{prop:uniq}, $\delta >0$, $u \in [2\delta,  v-2\delta]$ and $N \ge C= C(\delta,v)$; the dependence of constants on $\delta$ and $v$ will be kept implicit for the remainder of the proof. The previous choices also completely fix the parameters of the bridge, see above \eqref{e:supercrit-good} (only $\gamma$ remained to be chosen).

With $E_k$ as in \eqref{def:mthcalEB} and $\widetilde{F}_k$ as in \eqref{def:mathscrE}, let $\widetilde{\mathcal{K}}_g$ denote the subset of indices $k \in [K]$ 
corresponding to the boxes $\L_k$ such that the event $E_k \cap \widetilde{F}_k$ 
occurs. As will soon become clear, the indices in $\widetilde{\mathcal{K}}_g$ correspond to boxes in which, by virtue of Proposition~\ref{lem:finite_energy}, a full path in $\mathcal{V}^{u-\delta}$ joining $S_1$ and $S_2$ can be re-constructed at a not-too-degenerate cost. Lemmas~\ref{L:bound_bad} and \ref{cor:ample_connection} will then jointly be used to establish
that typically, the set $\widetilde{\mathcal{K}}_g$ has large cardinality. 

From the definition of the events $E_k$ and 
\eqref{eq:mathscrEBmeasurable}, it follows that for any $\widetilde K_g \subset [K]$,
\begin{multline}\label{eq:tildeKgmble}
\text{the event $\{\widetilde{\mathcal U }= \{\mathscr C\}, \widetilde {\mathcal K}_g = \widetilde K_g\}$ is} \\ \text{measurable relative to $\big(\mathcal I^u 
\cap B_{4M}, \, \mathcal I^{u-\delta} 
\cap (B_{4M} \setminus V_{2j}), \, \{\widetilde{\mathcal C}^{\widehat B}_u : B \in \mathbb B_{J_k}^k, k \in \widetilde K_g\}\big)$}.
\end{multline}
The important observation is that, if $E_k $ occurs and $S_1$ and $S_2$ are not connected in $\mathcal{V}^{u-\delta} \cap \widetilde{\Lambda}_k$ (cf.~\eqref{prop:a} regarding $\widetilde{\Lambda}_k$), then for $N \geq C$ not all of the boxes in $\mathbb B_{J_k}^k$ can be completely vacant in $\mathcal{V}^{u-\delta}$. For, otherwise, by definition of the events $E_B$ in \eqref{def:mthcalEB} and on account of property~\eqref{B2}, the set $\mathbb B^k \cap \mathcal{V}^{u-\delta}$ comprises a connection between $S_1$ and $S_2$. Moreover, by construction, see Section~\ref{sec:bridging}, any of the boxes in $\mathbb B^{k}$ is a subset of $\L_k + B_{s}$ with $s = 
(6400)^4(\log N)^{4\gamma}$ by our choice of parameters above \eqref{e:supercrit-good}, hence this connection is indeed contained in $\widetilde{\Lambda}_k$ whenever $N \geq C$. By this observation, if $ \widetilde K_g = \{k_1 
< \ldots < k_{\tilde \ell}\} \subset [K]$ such that $\tilde \ell = |\widetilde K_g| 
\geq 1$, $1 \le \ell \le \tilde \ell$ and $N \geq C$, abbreviating 
$$
\text{Disc}_{\ell}=\big\{\nlr{\widetilde\Lambda_{k_{i}}}{\mathcal{V}^{u-\delta}}{S_1}{S_2}, \text{ for all } 1 \le i < \ell \big\}
$$
and $\mathbb H^{k_\ell} = \mathbb B_{J_{k_{\ell}}}^{k_\ell}$ ($\mathbb H$ stands for `holes'), one obtains that
\begin{equation}\label{eq:mm-decomp}
\P_{\mathscr C}\big[\widetilde{\mathcal{K}}_g=\widetilde{K}_g,\, \text{Disc}_{\ell+1} \big]  
\leq \P_{\mathscr C}\bigg[\widetilde{\mathcal{K}}_g=\widetilde{K}_g,\, 
\text{Disc}_{\ell}, \, \Big(\bigcap_{B \in \mathbb H^{k_\ell} }\{B \subset \mathcal V^{u - \delta}\}\Big)^c\bigg].
\end{equation}
We now prepare the ground to decouple the events indexed by $B \in \mathbb H^{k_\ell}$ in \eqref{eq:mm-decomp}, for which Proposition~\ref{lem:finite_energy} will come into play. First, let us recollect some facts about the separation between boxes. Recall from \eqref{eq:gamma} that ${r_0}= \lfloor (\log N)^{\gamma}\rfloor$. By property~\eqref{B1} for the bridge $\mathbb B^{k_{\ell}}$, for any two distinct boxes $B, B' \in \mathbb H^{k_\ell}  $, their $s'$-thickenings $B^{s'}$ and $(B')^{s'}$, where $s' = 32(\log N)^{\gamma} \ge 32 {r_0}$ by our choices, are disjoint sets. Furthermore, any box in $ B \in \mathbb H^{k_\ell}$ is a subset of $\L_{k_{\ell}} + B_{s}$ with $s = (6400)^4(\log N)^{4\gamma}$ by definition of a bridge. Moreover,
$\widetilde \L_k \subset A_j$ by~\eqref{prop:a} and the $\ell^\infty$-distance between $\L_{k}$ and $\L_{k'}$ is at least $50N 1\{k \ne k'\}$ by~\eqref{prop:c}. Hence, the box $ B \in \mathbb H^{k_\ell}$ is a subset of $A_j$ and is disjoint 
from $\widetilde \L_{k}$ for any $k \ne k_{\ell}$ as soon as $N \ge C$. 

For a box $B=B(x,r)$, let $\widehat{B}= B(x, r+7r_0)$ (cf.~the statement of Proposition~\ref{lem:finite_energy}). It will also be convenient to assume an arbitrary ordering of the boxes $ \mathbb H^{k_\ell}$, and $B' < B$ refers to the boxes $B' \in \mathbb H^{k_\ell}$ appearing before $B \in \mathbb H^{k_\ell}$ in this ordering. Combining the above observations on the location of the holes, \eqref{eq:tildeKgmble} and recalling that $\widetilde{\mathcal C}^{\widehat{B}}_u$ is measurable relative to $\omega_{\widehat{B}}^-$ (see below \eqref{def:omega-BPhiBu}), it follows that for any $1 \le \ell \le \tilde{\ell}$ and $B \in \mathbb H^{k_\ell}$,  whenever $N \ge C$, 
\begin{equation}\label{eq:big_mblity}
\Big\{\widetilde{\mathcal U }= \{\mathscr C\}, \, \widetilde  {\mathcal K}_g= \widetilde K_{g}, \text{Disc}_{\ell}, \bigcap_{B' < B}\{B' \subset \mathcal V^{u - \delta}\}\Big\}
 \text{ is measurable~relative to  }(\omega_{{\widehat B}}^-,\, \mathcal I^u \cap \widehat B). 
 \end{equation}	
Let us now re-write, for any  $1 \le \ell \le \tilde{\ell}$ (with $k_\ell \in \widetilde{K}_g$)
\begin{multline*} 
\P_{\mathscr C}\Big[\widetilde{\mathcal{K}}_g=\widetilde{K}_g,\, 
 \text{Disc}_{\ell}, \, \bigcap_{B \in \mathbb H^{k_\ell} }\{B \subset \mathcal V^{u - \delta}\} \Big]\\
= \P_{\mathscr C}\big[ \widetilde{\mathcal{K}}_g=\widetilde{K}_g,\, 
\text{Disc}_{\ell}\big] \prod_{B \in \mathbb H^{k_\ell} }\P_{\mathscr C}\Big[B \subset \mathcal V^{u - \delta} \, \Big | \, \widetilde{\mathcal{K}}_g=\widetilde{K}_g,\,  \text{Disc}_{\ell}, \, \bigcap_{B' < B}\{B' \subset \mathcal V^{u - \delta}\}\Big].
\end{multline*}
Since $k_\ell \in \widetilde {\mathcal K}_g$ on the event $\{\widetilde {\mathcal K}_g = \widetilde K_g\}$, we obtain the following lower bound by applying \eqref{eq:finite_energy}, thus using \eqref{eq:big_mblity} and keeping in mind that 
the event $\widetilde{F}_B$ occurs for any box $B \in \mathbb H^{k_\ell}$ by definition of ${\mathcal K}_g$ and on account of \eqref{def:mathscrE}: for all $N \ge C $,
\begin{equation*}
\begin{split}
\P_{\mathscr C}\Big[\widetilde{\mathcal{K}}_g=\widetilde{K}_g,\, 
 \text{Disc}_{\ell}, \bigcap_{B \in \mathbb H^{k_\ell}}\{B \subset \mathcal V^{u - \delta}\}\Big] \ge \P_{\mathscr C}\big[ \widetilde{\mathcal{K}}_g=\widetilde{K}_g,\, 
\text{Disc}_{\ell}\big]  \times \exp\big\{-(\log N)^{C} |\mathbb H^{k_\ell}|\big\} ;
\end{split}	
\end{equation*}
in applying \eqref{eq:finite_energy} to obtain the right-hand side, we also used that
${r_0}= \lfloor (\log N)^{\gamma}\rfloor$ and the fact that any box in $B \in \mathbb H^{k_\ell}$ has radius $r \leq s = 
(6400)^4(\log N)^{4\gamma}$ by~\eqref{B3} and our choice of parameters. Moreover, by property~\eqref{B4} applied to $\mathbb H^{k_\ell} (= \mathbb B_{J_{k_{\ell}}}^{k_\ell})$, we have that $ |\mathbb H^{k_\ell}| \le  (\log N)^C$ and hence we deduce by combining the previous display and \eqref{eq:mm-decomp} that for $ \widetilde K_g = \{k_1 
< \ldots < k_{\tilde \ell}\} \subset [K]$ such that $\tilde \ell = |\widetilde K_g| 
\geq 1$, all $1 \le \ell \le \tilde \ell$ and $N \geq C$,
\begin{equation*}
	\begin{split}
		\P_{\mathscr C}&\big[\widetilde{\mathcal{K}}_g=\widetilde{K}_g,\, 
		  \text{Disc}_{\ell+1} \big]  \le \big(1 - e^{-(\log N)^{C'}}\big) \P_{\mathscr C}\big[\widetilde{\mathcal{K}}_g=\widetilde{K}_g,\,  \text{Disc}_{\ell}\big].
	\end{split}	
\end{equation*}
Iterating this inequality over all $1 \le \ell \le \tilde{\ell}$, we thus obtain, for all $\widetilde K_g \subset [K]$ and $N \geq C$,
\begin{equation}\label{eq:decay_disconnect}
\begin{split}
\P_{\mathscr C}\Big[\widetilde{\mathcal{K}}_g=\widetilde{K}_g,\, \bigcap_{k \in \widetilde{K}_g }\{\nlr{\widetilde\Lambda_{k}}{\mathcal{V}^{u-\delta}}{S_1}{S_2}\}\Big]	
&\le \exp\big\{- |\widetilde{K}_g| e^{-(\log N)^{C}} \big\} \P_{\mathscr C}\big[\widetilde{\mathcal{K}}_g=\widetilde{K}_g\big].
\end{split}	
\end{equation}
 
We now return to bounding the disconnection probability on the right-hand side of \eqref{eq:repeated_encounter} from above (which will later feed into \eqref{eq:Ai1} and yield the desired upper bound on $\P[E]$). We will only take advantage of the cost occasioned by the absence of connections between $S_1$ and $S_2$ in boxes $\Lambda_k$ with indices $k \in \widetilde{\mathcal{K}}_g$, which is quantified by \eqref{eq:decay_disconnect}. Its utility relies on $|\widetilde{\mathcal{K}}_g|$ being sufficiently large. Lemmas~\ref{L:bound_bad} and~\ref{cor:ample_connection} now come into effect. We thus write
\begin{equation}
\label{eq:final-S-i-bound-trichotomy}
\P_{\mathscr C}\Big[ \bigcap_{k \in [K] } \nlr{\widetilde{\Lambda}_k}{\mathcal{V}^{u-\delta}}{S_1}{S_2}\Big] \leq a_1 + a_2 + a_3,
\end{equation}
where we set, with $\mathcal{K}_g$ as introduced in \eqref{e:K_g} and $b$ as chosen at the start of the proof,
\begin{align*}
&a_1 = \P_{\mathscr C}\Big[ \bigcap_{k \in \widetilde{\mathcal{K}}_g }\{\nlr{\widetilde\Lambda_{k}}{\mathcal{V}^{u-\delta}}{S_1}{S_2}\} , \,  |\widetilde{\mathcal{K}}_g| \geq 4K \beta\Big], \\
&a_2 = \P_{\mathscr C}\big[ |\widetilde{\mathcal{K}}_g| < 4K \beta, \,  |\mathcal{K}_g| > 8K \beta \big],\quad a_3 = \P_{\mathscr C}\big[ |\mathcal{K}_g| \leq 8K \beta\big].
\end{align*}
We proceed to bound each term individually. Since $K > M^{\frac12}/ 400 N$, see above \eqref{prop:a}, where $M=M(N)$ is given by \eqref{eq:def_M}, it immediately follows by decomposing $a_1$ over realizations of $ \widetilde{\mathcal{K}}_g$ and using \eqref{eq:decay_disconnect} that for all $ \gamma_M \geq C$ and $N \geq C'(\gamma_M)$, one has $a_1 \leq \exp\{-  \, e^{ (\log M)^{c}}\big\}$ with $c=c(\gamma_M)$. By virtue of Lemma~\ref{cor:ample_connection}, which is in force, $a_3$ is bounded by the sum of a similar term and $C K \, \P_{\mathscr C}[ |\widetilde{\mathcal K}_b|  \ge 4K \beta]$ whenever $N \geq C(\delta,v)$. Note that for the latter, which corresponds to the second term on the right-hand side of \eqref{eq:sumstep}, we have replaced $|{\mathcal K}_b| $ by $|\widetilde{\mathcal K}_b| \geq |{\mathcal K}_b| $, cf.~\eqref{e:K_b}-\eqref{e:K_b-tilde}. Finally, the event defining $a_2$ implies that $|\mathcal{K}_g \setminus \widetilde{\mathcal{K}}_g| \geq 4K \beta$. But on account of \eqref{e:K_g} and since $ k \in \widetilde{\mathcal{K}}_g$ requires both $E_k$ and $\widetilde{F}_k$ to occur, for any $k \in (\mathcal{K}_g \setminus \widetilde{\mathcal{K}}_g)$ one knows that $\widetilde{F}_k^c$ occurs. In view of \eqref{e:K_b-tilde}, this means that $(\mathcal{K}_g \setminus \widetilde{\mathcal{K}}_g ) \subset  \widetilde{\mathcal{K}}_b$, hence $a_2  \leq \P_{\mathscr C}[ |\widetilde{\mathcal K}_b|  \ge 4K \beta] $. 

Inserting the resulting bounds for $a_i$, $1 \leq i \leq 3$, into \eqref{eq:final-S-i-bound-trichotomy}, and substituting back into \eqref{eq:repeated_encounter} and subsequently into \eqref{eq:Ai1}, observing while doing so that $\mathscr C$ is a partition of a subset of $\mathcal C$ where $|\mathcal C| \le  C M^{d-1}$ by \eqref{eq:definiq1}, whence the second sum in \eqref{eq:Ai1} over the number of partitions $\{\mathscr C\} 
= \{\mathscr C\}_1 \sqcup \{\mathscr C\}_2$ with $|\{\mathscr C\}_1| \le 4$ is over at most 
$C'M^{4(d-1)}$ terms, it follows that 
\begin{multline}\label{e:PE-final}
\P[E] \leq M^C \exp\big\{-  e^{ (\log M)^{c(\gamma_M)}}\big\} \  \\ + \ CK \sum_{\{\mathscr C\}} \P\big[\,\widetilde{\mathcal U} = \{\mathscr C\}\big] \times 
\sum_{\substack{\{\mathscr C\} = \{\mathscr C\}_1 \sqcup \{\mathscr C\}_2\\ |\{\mathscr C\}_1| \le 4}}\,  \P_{\mathscr C}\big[ |\widetilde{\mathcal K}_b|  \ge 4K \beta\big], 
\end{multline}
for all $\gamma_M \geq C$ and $N \geq C'(\gamma_M, \delta,v)$. Recalling that the sum over $\{\mathscr C\}$ is over realizations of $\widetilde {\mathcal U}$ such that $\{\widetilde {\mathcal U} = \{\mathscr C\}\} \cap E \ne \emptyset$, the two summations in the second line of \eqref{e:PE-final} re-combine to give left-hand side of \eqref{eq:sumstep_bad}, and \eqref{eq:uniqreduct_new1} follows directly from \eqref{e:PE-final} and Lemma~\ref{L:bound_bad}.
\end{proof}

\section{Controlling $|{\mathcal K}_g|$ and $ |\widetilde{\mathcal K}_b|$}
\label{Sec:2lemmas}

In this section, we supply the outstanding proofs of Lemmas~\ref{L:bound_bad} and~\ref{cor:ample_connection}, in this order, which where admitted in the course of proving Lemma~\ref{lem:uniqreduct} in Section~\ref{Sec:surgery}. We then reflect on the arguments of Sections \ref{Sec:surgery}-\ref{Sec:2lemmas} by making a few concluding observations, see Remark~\ref{R:final}.

\subsection{Proof of Lemma~\ref{L:bound_bad}}\label{subsec:proof_bound_bad}

In this section we supply the missing proof of Lemma~\ref{L:bound_bad}. We start with a combinatorial result. Recall that $N, b \geq 1$ and $K = \lceil \frac{ \sqrt{M} }{400 N}\rceil$ with $M=M(N)$, cf.~above \eqref{prop:a}, which depends implicitly on the choice of $\gamma_M$, see \eqref{eq:def_M}. Recall that $\beta = 2^{-\lfloor (\log N)^{b}\rfloor}$ and that $[K]=\{1,\dots, K\}$.

\begin{claim}[$b \geq 1$] \label{Cl1-supercrit} Let $\Gamma = 
	2^{\lfloor (\log N)^{10b}\rfloor}$. For all $N \geq C$ and $\gamma_M \ge 30b$ and ${\mathcal{K}} \subset [K]$ with $|{\mathcal K}| \ge  \beta K$, there exists $ \mathcal{K}'  \subset \mathcal K$ with $| \mathcal{K}'  | = \beta \Gamma$ such that, with $\mathcal{K}' = \{ k_1', \dots, k_{\beta \Gamma}' \}$, one has $(0 <) k_{i+1}' - k_i' \le \Gamma$ for all $1 \le i < \beta \Gamma$.
\end{claim}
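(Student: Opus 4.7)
The plan is to sort $\mathcal{K}$, examine consecutive gaps, and extract a long enough stretch in which every consecutive gap is at most $\Gamma$. Write $\mathcal{K} = \{k_1 < k_2 < \dots < k_n\}$ with $n \ge \beta K$, and call a gap $g_i \stackrel{\text{def.}}{=} k_{i+1} - k_i$ \emph{large} if $g_i > \Gamma$ (equivalently $g_i \ge \Gamma + 1$, as the $k_i$ are integers). Denoting by $B$ the number of large gaps, the large gaps partition $\mathcal{K}$ into $B+1$ maximal \emph{chains} of consecutive elements within which every gap is $\le \Gamma$. Any initial segment of $\beta\Gamma$ elements inside a single chain is a valid $\mathcal{K}'$, so the goal reduces to producing one chain of size at least $\beta\Gamma$.

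The first step is to bound $B$ from above. The telescoping identity $\sum_{i < n} g_i = k_n - k_1 \le K$ combined with every large gap being $\ge \Gamma + 1$ yields $B(\Gamma + 1) \le K$, so $B + 1 \le K/(\Gamma+1) + 1$. By pigeonhole, the longest chain then has at least
\[
\frac{n}{B+1} \,\ge\, \frac{\beta K (\Gamma + 1)}{K + \Gamma + 1}
\]
elements, and elementary manipulation shows this lower bound is $\ge \beta\Gamma$ precisely when $K \ge \Gamma(\Gamma + 1)$.

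The last step is to verify the numerical inequality $K \ge \Gamma(\Gamma+1)$ under the hypotheses $N \ge C$ and $\gamma_M \ge 30 b$. From $K \ge \sqrt{M}/(400 N)$ with $M = \exp\{(\log N)^{\gamma_M}\}$ one gets $\log K \ge \tfrac{1}{4}(\log N)^{\gamma_M}$ for $N \ge C$, whereas $\log \Gamma \le (\log 2)(\log N)^{10 b}$. Since $\gamma_M \ge 30 b > 10 b$, this forces $\log K \ge 3 \log \Gamma$ for all sufficiently large $N$, and in particular $K \ge \Gamma^2 + \Gamma$. Taking the first $\beta \Gamma$ elements of a chain of length at least $\beta \Gamma$ (noting that $\beta \Gamma = 2^{\lfloor (\log N)^{10 b}\rfloor - \lfloor (\log N)^{b}\rfloor}$ is a positive integer by $10b \ge b$ and $N \ge C$) then produces the required $\mathcal{K}'$. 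There is no real obstacle here: the argument is a one-step pigeonhole with an elementary numerical check, and the ample room $\gamma_M \ge 30 b$ makes the parameter budget painless.
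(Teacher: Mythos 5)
Your argument is correct and follows essentially the same route as the paper's proof: order the elements of $\mathcal{K}$, observe that there are few large gaps (since the gaps telescope to at most $K$), apply pigeonhole to find a long maximal block with only small gaps, and finish with an elementary numerical check that $K$ dominates $\Gamma^2$ under the hypotheses $\gamma_M \ge 30b$ and $N \ge C$. The only cosmetic difference is that you phrase the gap bound as $B(\Gamma+1)\le K$ and reduce matters to the clean inequality $K \ge \Gamma(\Gamma+1)$, whereas the paper bounds $|I|\le K/\Gamma$ and verifies $|I'|\ge 1.5\beta\Gamma - 1 \ge \beta\Gamma$ directly; both are elementary and equivalent.
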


\begin{proof}Let $k_1 < \ldots < k_{|{\mathcal K}|}$ denote the (ordered) elements in ${\mathcal K} \subset [K]$. Let ${I} \subset [\, |{\mathcal K}| \,]$ denote the subset of indices $i$ such that $k_{i+1} - k_i > \Gamma$. 
	Notice $|{I}|$ is at most $\frac{K}{ \Gamma}$ for otherwise we would have $k_{|{\mathcal K}|} > K$. 
	Consequently, the number of {\em maximal} subintervals of $[\, |{\mathcal K}| \,] \setminus {I} $ is {\em at most} $1 + \frac{K}{\Gamma}$, whereas the sum of their lengths is {\em at least} $|{\mathcal K}| - 
	\frac{K}{\Gamma}$. Let ${I'}$ denote a subinterval with maximum length among these. It follows that	\begin{equation*}
		|{I'}| \ge  \frac{|{\mathcal K}| - \frac{K}{\Gamma}}{1 + \frac K{\Gamma}} \ge  1.5 \beta \Gamma- 1 \ge \beta \Gamma
	\end{equation*}
	for all $N \ge C$ and $\gamma_M \ge 30b$, where we used that $|{\mathcal K}| \ge \beta K$ 
	(recall that $K \ge \tfrac{\sqrt{M}}{40 N} = \tfrac{1}{40 N}e^{(\log N)^{\gamma_M/2}}$ and $\beta = 2^{-\lfloor (\log 
		N)^b \rfloor}$). Writing the elements of ${I'}$ as $j, \ldots, j + |{I'}| - 1$, it is a consequence of the 
	definition of ${I'}$ that $k_{n + 1} - k_n \le \Gamma$ for all $1 \le n < j + |{I'} - 1|$. Since $|{I'}| \ge 
	\beta \Gamma$, we can simply take $\mathcal{K}' = \{k_j, \ldots, k_{j + \beta \Gamma - 1}\}$. 	
\end{proof}

Claim~\ref{Cl1-supercrit} will soon be applied to extract from a given large collection of boxes (indexed by $[K]$, cf.~above \eqref{prop:a}) a large sub-collection of not too distant boxes, cf.~\eqref{prop:a}.

\bigskip

We now proceed with the proof Lemma~\ref{L:bound_bad}, which occupies the remainder of this paragraph.

\begin{proof}[Proof of Lemma~\ref{L:bound_bad}] We start by replacing $\widetilde{\mathcal K}_b $ by a larger set $ \widehat{\mathcal K}_b$, which is more convenient to work with.
Consider the event (cf.~\eqref{def:mathscrE})
\begin{equation}\label{def:widehatF}
\widehat{F}_k \stackrel{{\rm def.}}{=} \bigcap_{B \in \mathbb B_{J_k}^k} \widehat{F}_B ,
\end{equation}
where $\widehat{F}_B= \widehat{F}_B(u, u, u, \delta/2, \delta, r_0)$ refers to the event introduced in \eqref{eq:fin_en_inclusion}, and $r_0 = \lfloor (\log N)^{\gamma}\rfloor$ as per our choice in \eqref{eq:gamma}, which will remain implicit in all subsequent notation. Define the corresponding random set $\widehat{\mathcal K}_b$ (cf.~the definition of $\widetilde{\mathcal{K}}_b$ in \eqref{e:K_b-tilde})
\begin{equation}\label{e:K_b-hat}
\widehat{\mathcal K}_b = \widehat{\mathcal K}_b(\{\mathscr C\}_{1,2}) = \big\{ k \in [K] : \ \Lambda_k 
\text{ is bad or } \widehat{F}_k^c \text{ occurs}\big\}.
\end{equation}
As noted in 
Remark~\ref{R:FE-implication-event} above, $\widetilde{F}_B^c \subset \widehat{F}_{B}^c$ and hence $\widetilde{\mathcal K}_b \subset 
\widehat{\mathcal K}_b$. Thus \eqref{eq:sumstep_bad} follows from the stronger statement
\begin{equation}
\label{eq:sumstep_badhat}
	\P\big[ \big\{ |\widehat{\mathcal K}_b| \ge 4 \beta K  \text{ for a (non-trivial) partition of } \widetilde{\mathcal U}\big\} \cap E\big] \leq  
	C \exp\big\{- e^{c(\log N)^{8b}}\big\},
\end{equation}
which we proceed to show. To start with, we extract from $\widehat{\mathcal K}_b$ a large sub-family $\widehat{\mathcal K}_b'$ of not-too-distant boxes using Claim~\ref{Cl1-supercrit}, as follows. Let $\mathbb{L}_N=10N\Z^d$. Recall that the centers $x_k$ of all boxes in the collection $\Lambda=\{ \Lambda_k : 1\leq k \leq K\}$, part of which have an index $k$ that belongs to $\widehat{\mathcal K}_b$, all lie on the lattice $\mathbb{L}_N$; see above \eqref{prop:a}. In the sequel, for $z \in \mathbb{L}_N$ and integer $\ell \ge 0$, we write $D_{z, \ell} = z + 10N\cdot[0, 2^{\ell})^d \subset \mathbb{L}_N$. Now suppose the event in 
	\eqref{eq:sumstep_badhat} occurs. Then by Claim~\ref{Cl1-supercrit} applied to $\mathcal{K}= \widehat{\mathcal{K}}_b$, there exists $\widehat{\mathcal{K}}_b' \subset \widehat{\mathcal{K}}_b$ and a corresponding
	family of boxes $(\L_k)_{k \in \widehat{\mathcal K}_b'}$ satisfying the defining condition in \eqref{e:K_b-hat}, such that $|\widehat{\mathcal K}_b'| \geq \beta \Gamma$ 
	and any two consecutive indices in $\widehat{\mathcal K}_b'$ are at most $\Gamma$ apart. Consequently, it follows from 
	property \eqref{prop:c} of the family $\Lambda$ that 
	\begin{equation}\label{eq:dyadic_inclusion}
		\{ x_k : k \in \widehat{\mathcal K}_b'\} \subset D_{z, \ell_0}\, \text{ with $\ell_0 = \log_2 2\Gamma^2$ and $z \in V_{2j}$.}
	\end{equation}
For all values of $(u \geq ) u_1 \ge u_2 \ge u_3 > \delta_2 > \delta_1 > 0$ and to each box $\L = B(x,20N)$, $x \in \mathbb{L}_N$, with $\widetilde \L = B(x, 25N)$, cf.~\eqref{prop:a}, we now attach the (likely) events $\widehat{H}_{\L}$ with
\begin{equation}\label{def:H_B}
		\widehat{H}_{\L}^c = \widehat{H}_{\L}^c(u_1, u_2, u_3, \delta_1, \delta_2) \stackrel{{\rm def}.}{=} \bigcap_{\substack{B \subset \widetilde\L,\\ {\rm rad}(B) \ge 16{r_0}}}  \widehat{G}_B(u_1,\delta_2) \,  \cap \,  \bigcap_{B \subset \widetilde\L} \widehat{F}_{B}(u_1, u_2, u_3, \delta_1, \delta_2),
	\end{equation}
with $\widehat{F}_B$ as above \eqref{eq:fin_energy_good} and where $\widehat{G}_B(u_1,\delta_2) = \{ \mathcal{C}_{u_1} \in \Xi_{B}^{u, \delta_2 u^{-1}}\}$, see \eqref{def:GBudelta} for notation, and ${r_0}= \lfloor (\log N)^\gamma \rfloor$ as before. The threshold $u$ implicit in $\widehat{G}_B$ will also remain fixed throughout and not appear in our notation; it refers to the the same value $u$ entering the definition of $\widetilde{\mathcal K}_b$ (whose cardinality we are trying to bound) through \eqref{e:supercrit-good}-\eqref{def:mathscrE}. 
	
	The event $\widehat{H}_{\L}$ in \eqref{def:H_B} is designed as follows. The `decoupling-good' event $G_B$ introduced in \eqref{e:-dec-good}, which enters the definition of a good box $\Lambda_k$ in \eqref{e:supercrit-good}, satisfies $G_B= \widehat{G}_B(u,\delta)$. Thus, recalling the specifics of our bridge above \eqref{e:supercrit-good}, which entail among other things that any box constituting a bridge in $\Lambda_k$ has radius exceeding $16r_0$ and is contained in $\widetilde \Lambda_k$, it follows in view of \eqref{e:supercrit-good} and \eqref{def:widehatF} that $\widehat{H}_{\L_k}^c(u, u, u, \frac\delta2, \delta) \subset \{\text{$\L_k$ is good and $\widehat{F}_k$ occurs}\}$. Thus, if $k \in \widehat{\mathcal K}_b$ as in \eqref{e:K_b-hat}, then the event $\widehat{H}_{\L_k}(u, u, u, \frac\delta2, \delta)$ occurs. 
	
 Now, for any $\ell \in \N$, $q \in \Z$, $Q = 2^{q}$ and $\Lambda_x= B(x,20N)$, let $n_{z, \ell}= \sum_{x \in D_{z, \ell}} 1_{\widehat{H}_{\Lambda_{x}} }$, which depends on $\xi=(u_1, u_2, u_3, \delta_1, \delta_2)$, and define
	\begin{equation}\label{eq:def-h-supercrit}
		\begin{split}
			&H_{\ell, q; z} = H_{\ell, q; z}(\xi)  \stackrel{{\rm def}.}{=}  \{n_{z, \ell} (\xi) \ge Q \}, \\  
			&h_{\ell, q} = h_{\ell, q} (\xi) \stackrel{{\rm def}.}{=} \P\left[ H_{\ell, q;0}\right]
 \ \big( =\P\left[ H_{\ell, q;z}\right] \big),
		\end{split}
	\end{equation}
where the last equality is due to translation invariance). It 
	then follows by the previous observation regarding the implications of the event $\widehat{H}_{\L}^c$ and \eqref{eq:dyadic_inclusion}, recalling that $\widehat{\mathcal{K}}_b' \subset \widehat{\mathcal{K}}_b$ has cardinality exceeding $\beta \Gamma= 2^{q_0}$ with
$q_0 = \lfloor (\log N)^{10b}\rfloor - \lfloor (\log N)^{b}\rfloor$, that the left-hand side of \eqref{eq:sumstep_badhat} is bounded by $M^d h_{\ell_0, q_0}(u, u, u, \delta/2, \delta)$ with $\ell_0$ as in \eqref{eq:dyadic_inclusion}. Hence, to obtain \eqref{eq:sumstep_badhat}, it is sufficient to prove that
	\begin{equation}\label{eq:pellq_bnd}
		h_{\ell_0, q_0}(u, u, u, \delta/2, \delta) \le C \exp\big\{- ce^{c(\log N)^{8b}}\big\} \quad(\text{with } q_0 = \lfloor (\log N)^{10b}\rfloor - \lfloor (\log N)^{b}\rfloor),	\end{equation}
	for $c=c(v,\delta)$ and suitable values of $\gamma$ and $\gamma_M$, as in the statement of Lemma~\ref{L:bound_bad}. 
	
	In order to bound $h_{\ell_0, q_0}$, we set up a recursive inequality for the functions $h_{\ell,q}$ in \eqref{eq:def-h-supercrit}. To this end fix an integer $\lambda \in [0, \ell]$. As we now explain, for any choice of $u_1 \ge u_2 \ge u_3 > \delta_2 > 
\delta_1 > 0$, implicit in the sequel, and $z \in \mathbb{L}_N$, the inclusion
\begin{multline}\label{eq:Erqinclusion}
		H_{\ell, q; z} \subset \Big(\bigcup_{\substack{x, y \in D_{z, \ell} \, \cap \, (z + 2^{\ell-\lambda}\mathbb{L}_N), \\ 2^{-(\ell-\lambda)}(10N)^{-1}|x - y|_{\infty} \, \ge \, 2}} \big(H_{\ell-\lambda, q-d\lambda -1; x} \cap 
		H_{\ell-\lambda, q-d\lambda -1; y}\big)\Big) \\
		\, \cup \, \Big(\bigcup_{x \in D_{z, \ell} \, \cap \, (z+ 2^{\ell-\lambda}\mathbb{L}_N)} H_{\ell-\lambda, q - 2d - 1; x}\Big)
\end{multline}
holds. Indeed, recall that $D_{z, \ell} = z + 10N\cdot[0, 2^{\ell})^d$ (viewed as a subset of $\mathbb{L}_N = 10N \Z^d$), and let $D_{z, \ell}^{\lambda} = D_{z, \ell} \, \cap \, (z + 2^{\ell-\lambda}\mathbb{L}_N)$. Since the boxes in $\{D_{x, \ell-\lambda}: x \in D_{z, \ell}^\lambda\}$ form a partition of $D_{z, \ell}$, we have on 
the event $H_{\ell, q; z}$ that $$n_{z, \ell}= \sum_{x \in D_{z, \ell}^{\lambda}} n_{x, \ell-\lambda} \geq Q .  $$
 Clearly then, for some $x \in D_{z, \ell}^\lambda$, the bound $n_{x, \ell-\lambda} \ge Q 2^{-d\lambda}=2^{q-d\lambda}$ holds. In particular, in view of \eqref{eq:def-h-supercrit} this means that~$H_{\ell-\lambda, q - d\lambda - 1; x}$ occurs (the seemingly unnecessary $-1$ will become clear momentarily). Now suppose that the first event on the right 
hand side of the inclusion in \eqref{eq:Erqinclusion} does {\em not} happen. Then by definition of $H_{\ell-\lambda, q-d\lambda -1; \cdot}^c$ in \eqref{eq:def-h-supercrit} it must be the case 
that for $x$ as above and any $y \in D_{z, \ell}^\lambda$ such that $2^{-(\ell-\lambda)}(10N)^{-1}|x - y|_{\infty} \ge 2$,
one has $ n_{y, \ell-\lambda} < Q 2^{-d\lambda - 1}$.
However, this yields that
\begin{equation*}
	\sum_{\substack{y \in D_{z, \ell}^\lambda : \, 2^{-(\ell-\lambda)}(10N)^{-1}|x - y|_{\infty} \le 1}} \, n_{y, \ell-\lambda} \ge   Q - Q 2^{-d\lambda - 1}2^{d\lambda} = Q/2.
\end{equation*}
Since the number of $y$ satisfying the constraint in the first summation is at most $4^d = 2^{2d}$, it 
follows that for one such $y$ we must have $n_{y, \ell-\lambda} \ge Q 2^{-2d - 1} $, i.e.~$H_{\ell-\lambda, q - 2d - 1; y}$ occurs. Overall this yields \eqref{eq:Erqinclusion}.

We now study the implication of 
\eqref{eq:Erqinclusion} in terms of the corresponding probabilities $h_{\cdot, \cdot}$ in \eqref{eq:def-h-supercrit}, with the aim of showing~\eqref{eq:pellq_bnd}. By a union bound, one immediately writes
\begin{equation}\label{eq:first_recursion}
	\begin{split}
		h_{\ell, q} \le \, 4^{d\lambda}\sup_{x \in \tilde D_{0, \ell}^\lambda}\P\left[H_{\ell-\lambda, q - d\lambda - 1; 0}  \cap H_{\ell-\lambda, q - d\lambda - 1; x}\right]  + 2^{d\lambda}\, h_{\ell - 
			\lambda, q - 2d - 1},
	\end{split}
\end{equation}
where $\tilde D_{0, \ell}^\lambda = \{x \in D_{0, \ell}^\lambda : 2^{-(\ell-\lambda)} (10N)^{-1}|x|_{\infty} \ge 2\}$. We now decouple the two events $H_{\ell-\lambda, q - d\lambda - 1; 0}$ and $H_{\ell-\lambda, q - d\lambda - 1; x}$ in \eqref{eq:first_recursion} by means of Proposition~\ref{prop:cond_decoup}, as follows. In the notation of \S\ref{subsec:RI}, we pick $r=30 N$ and choose sets $A$ and $U$ with $ B_r \subset A \subset U \subset B_{2r}$ such that the conclusions of Proposition~\ref{prop:cond_decoup} hold. Recall that these choices induce a corresponding process $(\mathcal{D}_u)_{u> 0}$ of excursions from $A$ to $\partial U$ between pairs of points in the clothesline $\mathcal{C}_u$, cf.~the discussion leading to \eqref{eq:IuDu}.

By inspection of \eqref{def:H_B}, referring to \eqref{eq:fin_energy_good} regarding the events $\widehat{F}_B=  \bigcap_{1 \le i \le 3} \widehat{F}^i_B$ and to \eqref{def:GBudelta}
 regarding $\widehat{G}_B(u_1,\delta_2) = \{ \mathcal{C}_{u_1} \in \Xi_{B}^{u, \delta_2 u^{-1}}\}$, 
 one infers that the event $H_{\ell-\lambda, q - d\lambda - 1; 0}$ is measurable relative to $(\mathcal D_{u_3 - \delta_2}, \mathcal D_{u_3 - \delta_1}, \mathcal D_{u_2 - \delta_1}, \mathcal D_{u_2}, \mathcal D_{u_1})$. Note that the parameters involved are listed in increasing order.
 Moreover, the event $H_{\ell-\lambda, q - d\lambda - 1; 0}^c$ is monotone in each of the five configurations involved. We illustrate this in the case of $\mathcal D_{u_1}$ and leave the others for the reader to check. The dependence on $\mathcal D_{u_1}$ comes through $\widehat{F}_{B'}^3$ in \eqref{eq:fin_energy_good}, where $B' \subset \Lambda$, which is clearly decreasing in $\mathcal D_{u_1}$, and through $\widehat{G}_{B'}(u_1,\delta_2) $. The latter only depends on $\mathcal{C}_{u_1}$ and is decreasing in $\mathcal{C}_{u_1}$ (and thus also in $\mathcal{D}_{u_1}$) on account of Remark~\ref{R:Caio}, items~\ref{rem:caio_monot} and~\ref{eq:monotonicity_Du}.
 
Returning to \eqref{eq:first_recursion}, assume now that $\ell-\lambda \ge 4$, which implies by definition of $\tilde D_{0, \ell}^\lambda $ that the supremum ranges over $x$ satisfying $|x|_{\infty} > 200 N$. It then follows by the afore measurability properties of $H_{\ell-\lambda, q - d\lambda - 1; 0}$ and translation invariance that the events $H_{\ell-\lambda, q - d\lambda - 1; 0}$ and $H_{\ell-\lambda, q - d\lambda - 1; x}$ are conditionally independent under $\mathbb{P}[\, \cdot \,  \vert \, \overline{\mathcal{C}}\,]$, where $\overline{\mathcal{C}}= (\mathcal C_{u_3 - \delta_2}, \mathcal C_{u_3 - \delta_1}, \mathcal C_{u_2 - \delta_1}, \mathcal C_{u_2}, \mathcal C_{u_1})$.
 
The sprinkling inherent to the decoupling will be parametrized by $\delta' > 0$, soon to be chosen proportional to $\delta$, and we assume that $\delta_2 - \delta_1 > 2\delta', \delta_1 > 2\delta'$ and $u_3 > 3\delta'$. Let $\mathcal{A}$ refer to the event in \eqref{eq:good_event_mul_excursion_bnd} with $k=5$ and parameters $(u_1,\dots u_5)$ appearing there chosen as $(u_3 - \delta_2, u_3 - \delta_1, u_2 - \delta_1, u_2, u_1)$, and $(\delta_1,\dots, \delta_5)$ as $\big(\tfrac{\delta'}{u_3 - 
	\delta_2}, \tfrac{\delta'}{u_3 - \delta_1}, \tfrac{\delta'}{u_2 - \delta_1}, \tfrac{\delta'}{u_2}, \frac{\delta'}{u_1}\big)$. Conditioning on $\overline{\mathcal{C}}$, it follows that for all $\ell-\lambda \ge 4$ and $x \in \tilde D_{0, \ell}^\lambda$, with $H_{\cdot}= H_{\ell-\lambda, q - d\lambda - 1 ;  \cdot}$ and $\xi=(u_1, u_2, u_3, \delta_1, \delta_2)$, 
\begin{multline*}
		\P\left[H_{ 0}(\xi) \cap H_{ x}(\xi)  \cap \{\overline{\mathcal{C}} \in \mathcal{A}\} \right] = \E\left[\mathbb{P} [ H_{0}(\xi) \, \vert \, \overline{\mathcal{C}} \, ] \,   \mathbb{P} [ H_{x}(\xi) \, \vert \, \overline{\mathcal{C}}\,]  \, 1_{\overline{\mathcal{C}} \in \mathcal{A}}  \right] \\ 
		\stackrel{\eqref{eq:mul_excursion_sandwich}}{\leq } 5\Cr{C:box_gap}e^{-c u_0 (\delta')^2 (2^{\ell-\lambda}N)^{\Cr{c:box_gap}}} +
		 \mathbb{P} [H_{0} (u_1 + \delta', u_2 - \delta', u_3 - 3\delta', \delta_1 - 2\delta', \delta_2 - 4\delta')] \cdot \mathbb{P} [H_{x}(\xi)],
\end{multline*}
where $u_0 = \min \{ u_3 - \delta_2, u_3 - \delta_1, u_2 - \delta_1, u_2, u_1\}$ and the inequality follows upon recalling the monotonicity features of $H_{0}$. To assist the careful reader in demystifying the parameters for the event $H_{0}$ in the second line, notice for instance that $H_{ 0}(\xi)$ is function of $\mathcal{D}_{u_3-\delta_1}$ through $(\widehat{F}_{B}^2)^c$ alone, cf.~\eqref{eq:fin_energy_good}, which is decreasing in $\mathcal{D}_{u_3-\delta_1}$, and this gets replaced by  the smaller configuration $\mathcal{D}_{u_3 - 3\delta'-(\delta_1 - 2\delta')}= \mathcal{D}_{u_3 - \delta_1 - \delta'} (\leq \mathcal{D}_{u_3-\delta_1})$, thus leading to an upper bound; a similar observation with opposite monotonicity applies to $\mathcal{D}_{u_3-\delta_2}$. Using \eqref{eq:good_event_mul_excursion_bnd} to bound $\P[\overline{\mathcal{C}} \notin \mathcal{A}]$ and feeding the resulting estimate into \eqref{eq:first_recursion}, it follows that for all $\ell-\lambda \ge 4$, any $ u_1 \ge u_2 \ge u_3 > \delta_2 > \delta_1 > 0$ and $\delta'> 0$ such that $\delta_2 - \delta_1 > 2\delta', \delta_1 > 2\delta'$ and $u_3 > 3\delta'$,
abbreviating $\xi = (u_1, u_2, u_3, \delta_1, \delta_2)$ and $\xi'= (u_1 + 
		\delta', u_2 - \delta', u_3 - 3\delta', \delta_1 - 2\delta', \delta_2 - 4\delta' )$,
\begin{equation}\label{eq:partial_diff1}
	\begin{split}
		h_{\ell, q} (\xi) \le  4^{d\lambda}&\big(h_{\ell-\lambda, q - d\lambda - 1}^2(\xi') + 6\Cr{C:box_gap}e^{-c u_0 (\delta')^2 (2^{\ell-\lambda}N)^{\Cr{c:box_gap}}}\big)  + 2^{d\lambda}\, h_{\ell-\lambda, q - 2d - 1}(\xi).
	\end{split}
\end{equation}	

We will now iterate \eqref{eq:partial_diff1}, starting from $u_1=u_2=u_3=u$, $(\ell, q) = (\ell_0, q_0)$ and $(\delta_1, \delta_2) = (\frac{\delta}{2}, \delta)$, as needed for the purpose of estimating the left-hand side of \eqref{eq:pellq_bnd}.
We set $\xi_{n}=(u_{1,n},u_{2,n}, u_{3,n}, \delta_{1,n}, \delta_{2, n} )$ for $n \geq 0$ with $u_{i,0}= u_i$, $\delta_{i,0}=\delta_i$, and $\xi_{n}=(\xi_{n-1})'$ as above \eqref{eq:partial_diff1} with increment parameter $\delta' = \delta'_n= \frac\delta{64n^2}$ for all $n \geq 1$. With these choices, the parameter values $\xi_n$ after round $n$ are given by $$\xi_n  = \textstyle(u + \Delta_n, u - \Delta_n, u - 3\Delta_n, \frac\delta 2 - 
2\Delta_n, \delta - 4\Delta_n),$$
with $\Delta_n = \tfrac{\delta}{64}\sum_{1 \le i \le n} \frac1 {i^2}.$ In particular, recalling that $u \geq 2 \delta$ (see above \eqref{eq:sumstep_bad}), it is always the case that $u_{1,n} \ge u_{2,n} \geq u_{3,n} > 3\delta_{1}' >3\delta_{n+1}'$, $\delta_{1, n} = \frac\delta 2 - 
2\Delta_n > 2\delta_n'$ and $\delta_{2, n} - \delta_{1, n} = \frac\delta 2 - 
2\Delta_n > 2\delta_n'$. 

Observe that the quantity we wish to bound in \eqref{eq:pellq_bnd} now simply reads $h_{\ell_0, q_0} (\xi_0)$. 
Moreover, setting $\ell_n = \ell_0- n\lambda$ and assuming that $\ell_0 -  n\lambda \geq 4$, \eqref{eq:partial_diff1} yields a bound for $h_{\ell_{n-1}, q} (\xi_{n-1})$ in terms of $h_{\ell_{n}, q - d\lambda - 1} (\xi_{n})$ and $h_{\ell_{n}, q - 2d - 1} (\xi_{n})$. We can naturally map each term involving $h_{\ell_n, \cdot}$ appearing at the outcome of the $n$-th round to an $n$-tuple $\mb m = (m_1, \ldots, m_n) \in \{0, 1\}^n$ encoding whether it was obtained from the {\em quadratic} term (encoded as $1$) or the {\em linear} term (encoded as 
$0$) at each round $ 1\leq k \leq n$. Thus, denoting $m = \sum_{1 \le k \le n} m_k$ for any $\mb m \in \{0, 1\}^n$, setting $q_{m, n} = q_0 - dm(\lambda-2) - (2d+1)n$, one obtains from \eqref{eq:partial_diff1} and an induction argument that, as long as $\ell_n = \ell_0- n\lambda \geq 4$,
\begin{multline}\label{eq:iterated_rel}
		h_{\ell_0, q_0}(\xi_0) 
		\le \sum_{\mb m \in \{0, 1\}^n} 4^{n(d\lambda+C)2^{m}}h_{\ell_n,\, q_{m, n}}^{2^m}(\xi_n)\\
		 + \sum_{0 \le k < n} \,\sum_{\mb m \in \{0, 1\}^k} 4^{(k+1)(d\lambda+C)2^{m}}\exp\big\{-cu (k+1)^{-4}(2^{\ell_k-\lambda}N)^{\Cr{c:box_gap}}2^{m}\big\},
\end{multline}
where one also uses the simple fact that $(a + b + c)^{k} \le 3^{k}(a^{k} + b^{k} + c^{k})$ for all $a, b,c \ge 0$ and positive integers $k(=2^m)$ while expanding the powers arising from the squares in \eqref{eq:partial_diff1}.

We now derive a meaningful lower bound on $m$ for any non-vanishing term contributing to the first sum on the right-hand side of \eqref{eq:iterated_rel}, for suitable choice of $\lambda$. First notice that $h_{\ell, q}= 0$ whenever $q > d\cdot\ell$ by definition, see \eqref{eq:def-h-supercrit} (indeed, $n_{z, \ell} \leq 2^{d\ell}$) and hence the only terms that contribute 
to the first sum in \eqref{eq:iterated_rel} are precisely those $\mb m$ for which
\begin{equation}\label{eq:bndqmn}
	q_{m, n} - d\cdot\ell_n = (q_0 - d\ell_0) + d(n - m) (\lambda - 2) - n \le 0.
\end{equation}
Owing to the definition of $\ell_0$ in \eqref{eq:dyadic_inclusion} (recalling that $\Gamma = 
	2^{\lfloor (\log N)^{10b}\rfloor}$) and $q_0$ in \eqref{eq:pellq_bnd}, it follows that $n \le \frac{\ell_0}{\lambda}$ implies that $n \le \frac{q_0}2$ whenever $N \ge C$ and $\lambda \ge 8$. Therefore any $\mb m$ satisfying 
\eqref{eq:bndqmn} must also satisfy, for any $\lambda \ge 50d$, $N \ge C$ and $n \le 
\frac{\ell_0}{\lambda}$,
\begin{equation*}
	\begin{split}
		n - m \le \frac{\ell_0}{\lambda} \frac{1 - \frac{q_0 - n}{d\ell_0}}{1 - \frac{2}{\lambda}} 
				\,  \stackrel{(\lambda \ge 4)}{\le}\frac{\ell_0}{\lambda} \Big(1 - \frac{q_0}{2d\ell_0} + \frac{4}{\lambda}\Big) \, 
		\stackrel{\eqref{eq:dyadic_inclusion}, \eqref{eq:pellq_bnd}}{\le} \, \frac{\ell_0}{\lambda} \Big(1 - \frac{1}{6d} + \frac{4}{\lambda}\Big) \stackrel{(\lambda \ge 50 d)}{\le} \frac{\ell_0}{\lambda} \Big(1 -\frac{1}{12d}\Big)
	\end{split}
\end{equation*}
where in the third step we also used the fact that $N$ is large and that $\lim_N \frac{q_0}{\ell_0}=\frac12$. Fix
\begin{equation}\label{def:n}
	\textstyle	n = \big \lceil \frac{\ell_0}{\lambda}\big(1 -\frac{1}{12d}\big)^{\frac12}\big\rceil, \quad \lambda = 50 d
\end{equation}
so that in view of our previous discussions there exists $\Cl[c]{c:quad_frac} > 0$ 
such that, when $N \geq C$, one has for all nonzero summands in the first sum on the right-hand side of \eqref{eq:iterated_rel} that	
\begin{equation}\label{e:m-islarge}
m \ge \Cr{c:quad_frac} n.
\end{equation}
Moreover, \eqref{def:n} implies that
\begin{equation}\label{e:l_n large}
\ell_n = \ell_0 - n\lambda	\ge \Cr{c:quad_frac} \ell_0.
\end{equation}
In particular, \eqref{e:l_n large} implies that $\ell_n \geq 4$ when $N \geq C$ since $\ell_0 \to \infty$ as $N \to \infty$. Thus \eqref{eq:iterated_rel} is in force for the choices \eqref{def:n}.

\begin{claim} \label{Cl-2-supercrit}For any $b \geq1$, $\gamma \ge C b$, $v, \delta >0$, $u \in [2\delta,  v-2\delta]$, $N \geq 1$ and $0 \leq m \leq n$ with $n$ as in \eqref{def:n}, one has
\begin{equation}\label{e:supercrit_1-bad}h_{\ell_n,\, q_{m, n}}(\xi_n) \le 
C e^{-c(\delta,v) (\log N)^{12b}}.\end{equation}
\end{claim}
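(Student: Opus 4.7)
My plan is to establish \eqref{e:supercrit_1-bad} by a single application of Markov's inequality. Since $n_{0,\ell_n}(\xi_n) = \sum_{x \in D_{0,\ell_n}} 1_{\widehat{H}_{\Lambda_x}(\xi_n)}$ is a sum of translation-invariant indicators, its expectation equals $2^{d\ell_n} p(\xi_n)$, where I set $p(\xi_n) = \P[\widehat{H}_{\Lambda}(\xi_n)]$ with $\Lambda = B(0,20N)$. Markov's inequality (with the convention $\P[n \geq 2^{q_{m,n}}] \leq \E[n]$ when $q_{m,n} < 0$, using integrality of $n_{0,\ell_n}$) then yields
\[
h_{\ell_n, q_{m,n}}(\xi_n) \leq 2^{d\ell_n - q_{m,n}} \, p(\xi_n).
\]
Using $\ell_n \leq \ell_0 \leq C(\log N)^{10b}$, $n \leq \ell_0/\lambda$ with $\lambda = 50d$, and the explicit form of $q_0$ and $q_{m,n}$, the prefactor is at most $\exp(C_0 (\log N)^{10b})$ for some $C_0 = C_0(d)$.

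The substance of the argument is then to bound $p(\xi_n)$ by a stretched-exponential of order $e^{-c(\log N)^{c_1 \gamma}}$ with $c_1 = c_1(d) > 0$. I will use the union bound inherent in \eqref{def:H_B}, summing over the $O(N^{2d})$ sub-boxes $B$ of $\widetilde{\Lambda}$:
\[
p(\xi_n) \leq \sum_{\substack{B \subset \widetilde{\Lambda},\\ \,{\rm rad}(B) \geq 16r_0}} \P[\widehat{G}_B(u_{1,n}, \delta_{2,n})^c] + \sum_{B \subset \widetilde{\Lambda}} \P[\widehat{F}_B(u_{1,n}, \dots, \delta_{2,n})^c].
\]
Each summand of the second type is controlled by \eqref{e:F_B-proba-boosted}, and each summand of the first type by Proposition~\ref{prop:cond_decoup}~ii). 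Since $r_0 = \lfloor (\log N)^\gamma \rfloor$ and every box $B$ in the first sum has $r \geq 16 r_0$, both bounds are of the form $N^{O(1)} \exp(-c(v,\delta) (\log N)^{c_1 \gamma})$, essentially with $c_1 = \Cr{c:box_gap}$. Crucially, the iterated parameters $u_{i,n}, \delta_{i,n}$ stay within a factor two of $u, \delta$ throughout (since $\Delta_n \leq \delta \pi^2/384 < \delta/10$ and $u \geq 2\delta$), so the constants $c(v,\delta)$ are uniform in $n$.

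Combining the two estimates and choosing $\gamma \geq Cb$ with $C = C(d)$ large enough that $c_1 \gamma$ exceeds $12b + C_0$ with room to spare, I expect $h_{\ell_n, q_{m,n}}(\xi_n) \leq C e^{-c(v,\delta)(\log N)^{12b}}$ for $N \geq C'(v,\delta)$, as claimed. The main technical subtlety I anticipate concerns the event $\widehat{G}_B(u_1, \delta_2) = \{\mathcal{C}_{u_1} \in \Xi_B^{u, \delta_2 u^{-1}}\}$, in which the clothesline $\mathcal{C}_{u_1}$ (with $u_1 = u_{1,n}$ slightly exceeding $u$) is evaluated against the set $\Xi_B^{u, \delta_2/u}$ indexed by $u$ rather than $u_1$; the monotonicity of $\Xi$ under $\zeta$ recorded in Remark~\ref{R:Caio}~\ref{rem:caio_monot} points the wrong way for a direct comparison with \eqref{eq:corcond_decouple}, but the discrepancy can be absorbed by applying Proposition~\ref{prop:cond_decoup} at level $u_{1,n}$ with a slightly adjusted sprinkle, using that $|u_{1,n} - u| \leq \Delta_n \ll u$ so that the resulting bound differs from \eqref{eq:corcond_decouple} only in the value of constants.
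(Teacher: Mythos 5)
Your proposal is correct and follows the paper's own argument almost verbatim: a union bound (the paper uses $\{n_{0,\ell_n} \geq 2^{q_{m,n}}\} \subset \{n_{0,\ell_n} \geq 1\}$ via integrality of $n_{0,\ell_n}$, which yields the same prefactor as your Markov bound) reduces $h_{\ell_n,\, q_{m, n}}(\xi_n)$ to a supremum over sub-boxes $B \subset \widetilde\Lambda$ of $\P[\widehat{G}_B(u_{1,n},\delta_{2,n})^c]$ and the three $\P[(\widehat{F}_B^i)^c]$, each then estimated individually for the iterated parameters $\xi_n$ and absorbed into the slack afforded by $\gamma \geq Cb$. The subtlety you flag concerning $\widehat{G}_B(u_{1,n},\delta_{2,n})$ is exactly the one the paper addresses, via the inclusion $\Xi_B^{u,\delta_{2,n}u^{-1}} \supset \Xi_B^{u_{1,n},\bar\delta_{2,n}}$ with $\bar\delta_{2,n}=(\delta_{2,n}-\Delta_n)u_{1,n}^{-1}$ (relying on the fact that the conditional law underlying $\Xi_B^{\cdot,\cdot}$ is a function of $\zeta$ alone, independently of the indexing level) followed by an application of \eqref{eq:corcond_decouple} at level $u_{1,n}$, which is precisely the fix you propose.
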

	
Let us first conclude the proof of \eqref{eq:pellq_bnd} assuming Claim~\ref{Cl-2-supercrit} to hold. Plugging the estimate for $h_{\ell_n,\, q_{m, n}}$ into \eqref{eq:iterated_rel} 
	and subsequently using the fact that $\ell_0 = 1 + 2\lfloor(\log N)^{10b}\rfloor$, \eqref{def:n} and \eqref{e:m-islarge} to deal with the first sum on the right-hand side of~\eqref{eq:iterated_rel}, as well as \eqref{e:l_n large} and \eqref{def:n} to deal with second one, one obtains that, for suitable $c=c(\delta,v)$,
	\begin{multline*}
			h_{\ell_0, q_0}(\xi_0) \le 
			2^n\exp\big\{-(\log N)^{12b}(c - C(\log N)^{-2b})2^{c \ell_0}\big\} \\
			+ 4^n  \exp\big\{-cu n^{-4}2^{c\ell_n}N^c + C(n+1)\big\}
			\le 
			C \exp\big\{-e^{c(\log N)^{8b}}\big\},
	\end{multline*}
	whence \eqref{eq:pellq_bnd}. We now return to the 
	
	\begin{proof}[Proof of Claim~\ref{Cl-2-supercrit}] Recalling that $\xi_n= (u_{1,n},u_{2,n}, u_{3,n}, \delta_{1,n}, \delta_{2, n} )$, it follows in view of \eqref{eq:def-h-supercrit}, \eqref{def:H_B} and \eqref{eq:fin_energy_good}, since $2^{q_{m,n}}> 0$ (note that $q_{m,n}$ could be negative), that
	\begin{multline*}
			h_{\ell_n,\, q_{m, n}}(\xi_n) 
			\le C N^{d+1} \bigg(\sup_{\substack{B 
					\subset \tilde \L,\\ {\rm rad}(B) \ge 16{r_0}}} \P\Big[\big(\widehat{ G}_B(u_{1,n}, \delta_{2,n})\big)^c\Big] + \sup_{B \subset \tilde \L} \P\Big[\big(\widehat{F}_{B}^1(u_{2,n}, \delta_{1, n})\big)^c\Big] \\
			+ \sup_{B \subset \tilde \L} \P\Big[\big(\widehat{{F}}_{B}^2(u_{3,n}, \delta_{1, n}, \delta_{2, n})\big)^c\Big] + \sup_{B \subset \tilde \L} \P\Big[\big(\widehat{{F}}_{B}^3(u_{1,n})\big)^c\Big]\bigg).
	\end{multline*}
	Hence, it suffices to bound each of the four probabilities probabilities individually by the right-hand side of \eqref{e:supercrit_1-bad}. Recall $\widehat{{F}}_{B}^3(u_{1,n})$ from \eqref{eq:fin_energy_good}. Since $u_{1,n} = u + \Delta_n$, $2\delta \leq u  \leq v- 2\delta$ by assumption and one has the bound $\Delta_n \le \tfrac{\delta}{32}$ valid for all $n$ (see below \eqref{eq:partial_diff1}), it follows that $\tfrac{\delta}{2} \leq u_{1,n}  \le v$. Thus, using standard bounds on the tail probabilities for Poisson and geometric random variables and recalling that ${r_0}= \lfloor (\log N)^{\gamma} \rfloor$ it follows that the probability of $\widehat{{F}}_{B}^3(u_{1,n})^c$ is bounded by $Ce^{-c(\delta,v)(\log N)^{12b}}$ whenever $\gamma \geq Cb$. 
	
	A similar bound can be derived for $\P\big[(\widehat{{F}}_{B}^2(u_{3,n}, \delta_{1, n}, \delta_{2, n}))^c\big]$ by means of \eqref{eq:I_u} since $\mathcal I^{u_{3,n} - \delta_{1,n}} \setminus\mathcal I^{u_{3,n} - \delta_{2,n}}$ has the same as $\mathcal{I}^{\delta_{2, n} - \delta_{1, n}}$, upon observing that $\delta_{2, n} - \delta_{1, n} = \frac\delta2 - 6 \Delta_n \ge \tfrac{7}{16}\delta$. As for the event $\big(\widehat{F}_{B}^1(u_{2,n}, \delta_{1, n})\big)^c$ (see~\eqref{def:ABg} and \eqref{eq:fin_energy_good}), one notices that
		\begin{equation*}
		\begin{split}
\bigcap_{\substack{x, y \in \mathcal I^{u - \delta} 
					\, \cap \, D,\\ D \subset A(B, {r_0}),\, {\rm rad}(D) = {r_0}}}\big\{  \lr{}{\mathcal I^u \, \cap\, D_{{r_0}}}{x}{y}  \big\} \cap  \bigcap_{\substack{D' \subset A(B, {r_0}),\\ {\rm rad}(D') = \lfloor\tfrac{{r_0}}{2}\rfloor}}\left\{  \mathcal I^u \cap D' \neq \emptyset \right\} \subset {\widehat F}_{B}^1(u, \delta).
		\end{split}
	\end{equation*}
(with $D,D'$ ranging over $\ell^{\infty}$-boxes with the prescribed features). The desired bound on the probability of $\widehat{F}_{B}^1(u_{2,n}, \delta_{1, n})$ now follows by combining a union bound, \eqref{eq:I_u} and \cite[Proposition~1]{MR2819660} (see also \cite[(5.4) and (5.20)]{DPR22} for more precise estimates).

Lastly, with regards to $\widehat{ G}_B(u_{1,n}, \delta_{2,n})^c$, one has due to \eqref{def:GBudelta}, writing $\bar{\delta}_{2,n} = (\delta_{2,n}-\Delta_n) u_{1,n}^{-1} (\geq c\delta)$,
$$
\widehat{G}_B(u_{1,n},\delta_{2,n}) = \big\{ \mathcal{C}_{u_{1,n}} \in \Xi_{B}^{u, \delta_{2,n} u^{-1}}\big\} \supset \big\{ \mathcal{C}_{u_{1,n}} \in \Xi_{B}^{u_{1,n}, \bar{\delta}_{2,n} }\big\} = \mathcal G_{B}^{u_{1,n},\bar{\delta}_{2,n} },
$$
and the desired bound for $\P[(\widehat{ G}_B(u_{1,n}, \delta_{2,n})^c]$ follows by applying \eqref{eq:corcond_decouple} with $r=r_0 (= \lfloor (\log N)^{\gamma} \rfloor)$. 
 \end{proof}
With Claim~\ref{Cl-2-supercrit} proved, the proof of Lemma~\ref{L:bound_bad} is complete.
\end{proof}	

\begin{remark}[$\gamma$ vs.~$\gamma_M$]\label{R:supercrit-badpts}
We now briefly take a look back at the proof of Lemma~\ref{L:bound_bad}. The probability for just one (fixed) box $\Lambda_k$ to have index $k$ in the bad set $\widetilde{\mathcal{K}}_b$, see \eqref{e:K_b}, is already quite costly. Indeed this roughly amounts to the bound obtained in the proof of Claim~\ref{Cl-2-supercrit}, and can be seen to be of order $\exp\{-c(\delta)(\log N)^{c\gamma}\}$ with $\gamma$ the exponent defining $r_0$, cf.~\eqref{eq:gamma}. On the other hand, finding one such point inside $\widetilde{\mathcal{K}}_b$ by means of a union bound yields a combinatorial complexity $M(N)^C$, so for this to produce a meaningful upper bound for the left-hand side of \eqref{eq:sumstep_bad}, one would need $\gamma$ to be large compared with $\gamma_M$. Unfortunately, the opposite is really underlying our finite energy mechanism: indeed, the constant $C$ in the exponent on the right-hand side of \eqref{eq:decay_disconnect} can be traced back to be proportional to $\gamma$ (which has been fixed at the beginning of that proof), and it is crucial for the number of `good' indices $|\widetilde{\mathcal{K}}_g|$ to compensate for the resulting cost $e^{-(\log N)^{C}}$ in \eqref{eq:decay_disconnect}. However this number is at most $K$, see above \eqref{prop:a}, which is never going to be larger than linear in the size $M(N)$, thus forcing $\gamma_M \geq C\gamma$.
\end{remark}

\subsection{Proof of Lemma~\ref{cor:ample_connection}}\label{sec:K_g}

Recall that $[K]= \{ 1,\dots,K\}$ and $K = \lceil \frac{ \sqrt{M} }{400 N}\rceil$, see above \eqref{prop:a}, and that the sets $\mathcal{K}_b$, $\mathcal{K}_g$, see \eqref{e:K_b}, \eqref{e:K_g}, implicitly depend on $\gamma$ (through $r_0$ and the bridge, see around \eqref{eq:gamma}), $u$ and $\delta$.
Lemma~\ref{cor:ample_connection} will follow readily from the following result

\begin{lem}
\label{L:explore_bad} For all $\gamma > 10$, $b \ge 1$, $v>0$ as in the statement of Proposition~\ref{prop:uniq}, $\delta >0$, $u \in [2\delta,  v-2\delta]$ and $N \ge C(\delta,v)$, $K_g, K_b \subset [K]$ and $k \in [K]$ with 
$K_g\cap K_b=\emptyset$ and $k \notin K_g\cup K_b$, 
\begin{multline}
\label{eq:1step}
\P_{\mathscr C}[\mathcal{K}_g = K_g \cup \{k \},\, \mathcal{K}_b=K_b] \\
\geq (D_0)^{-1} \, \P_{\mathscr C}[\mathcal{K}_g =K_g,\, \mathcal{K}_b =K_b] - D_1 \, \P_{\mathscr 
	C}[\mathcal{K}_g = K_g,\, \mathcal{K}_b=K_b \cup \{k\}],
\end{multline}
where $D_0 = \exp\{ C(\delta,v)(\log eN)^{\Cl{explore_bad}}\}$ 
and $D_1 = \Cr{explore_bad}\log\log e^2N$.
\end{lem}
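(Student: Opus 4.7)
The three probabilities in \eqref{eq:1step} differ only in the local status of the single box $\Lambda_k$: $P_a = \P_{\mathscr C}[\mathcal K_g = K_g \cup \{k\}, \mathcal K_b = K_b]$ corresponds to $\Lambda_k$ being good with $E_k$ holding, $P_b = \P_{\mathscr C}[\mathcal K_g = K_g, \mathcal K_b = K_b]$ to $\Lambda_k$ good but $E_k$ failing, and $P_c = \P_{\mathscr C}[\mathcal K_g = K_g, \mathcal K_b = K_b \cup \{k\}]$ to $\Lambda_k$ bad. Thus \eqref{eq:1step} is, morally, the quantitative one-box conditional bound asserting that, given the status of all $\Lambda_{k'}$ with $k' \neq k$ and given that $\Lambda_k$ is good, the probability of $E_k$ is at least $1/D_0$, modulo an additive error absorbable into $P_c$.

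The plan is to reconstruct $E_k = \bigcap_{B \in \mathbb B_-^k} E_B$ box by box inside the bridge $\mathbb B^k$ by exploiting the conditional decoupling property \eqref{def:GBudelta} at each scale. On $\{\Lambda_k \text{ good}\}$ every $B \in \mathbb B_-^k$ is decoupling-good, i.e.~$\mathcal C_u^{A_B,U_B} \in \Xi_B^{u,\delta u^{-1}}$, and applying \eqref{def:GBudelta} to the increasing indicator $f = \mathbf 1_{E_B}$ of $\mathcal V_B^u$ yields
\begin{equation*}
\mathbf 1_{G_B}\, \P_{\mathscr C}\!\big[E_B \,\big|\, \mathcal C_u^{A_B,U_B},\, \mathcal F_{\text{ext}}\big] \,\geq\, \mathbf 1_{G_B}\big(\P[E_B \text{ in } \mathcal V^{u+\delta}] - \Cr{C:box_gap}\, e^{-c\delta^2 r^{\Cr{c:box_gap}}}\big),
\end{equation*}
where $r = \mathrm{rad}(B)$ and $\mathcal F_{\text{ext}}$ encodes everything outside the decoupling zone of $B$. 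Under the disconnection hypothesis on $v$ combined with $u + \delta < v$, the uniform connection bound \eqref{eq:cond-u-twopointsbound} holds at level $u+\delta$, so Lemma~\ref{lem:twopointsbound} yields $\P[E_B \text{ in } \mathcal V^{u+\delta}] \geq \exp(-C(\delta,v)(\log r)^2)$. The bridge has been tailored to this scheme: property \eqref{B1} with $\xi = 1 - \Cr{c:box_gap}$ exactly matches the $r^{\Cr{c:box_gap}}$-buffer demanded by Proposition~\ref{prop:cond_decoup}, so that decoupling zones of distinct bridge boxes do not interfere, while \eqref{B3} ensures $r \geq s' = 32(\log N)^{\gamma}$, making the additive error $e^{-c\delta^2 r^{\Cr{c:box_gap}}}$ sub-polynomial in $N$.

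Assembling the per-box estimates scale by scale, from the coarsest layer $\mathbb B_1^k$ down to $\mathbb B_{J_k-1}^k$, the product of the polynomial lower bounds supplies an overall factor of at least $\exp(-C(\delta,v)(\log N)^{C'})$ since $|\mathbb B_-^k| \leq (\log N)^C$ by \eqref{B4}; this accounts for the constant $D_0 = \exp(C(\delta,v)(\log eN)^{\Cr{explore_bad}})$. At each of the $J_k \leq \Cr{C:bridge_sup}\log\log e^2 L$ scales we also incur one additive decoupling error, and the key observation is that any such error is dominated by the conditional probability that some box at that scale fails to be decoupling-good, i.e.~that $\Lambda_k$ is in fact bad. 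Summing over scales converts this contribution into the $-D_1 P_c$ term with $D_1 = \Cr{explore_bad}\log\log e^2 N$. The anticipated main obstacle is the measurability bookkeeping: one must choose an order in which to reveal the clotheslines $\mathcal C_u^{A_B,U_B}$ consistent with the hierarchy of $\mathbb B^k$ and the safety gaps from \eqref{B1}, so that at each step the fresh randomness inside the newly revealed box is genuinely independent (up to the controlled decoupling error) of the previously revealed data. This is what enables the errors to telescope into a single $\log\log$-factor rather than accumulate uncontrollably.
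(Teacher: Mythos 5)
Your high-level plan is essentially the paper's own: decompose the good event by cases on $\L_k$'s status, reveal the bridge boxes scale by scale (coarse to fine), apply \eqref{def:GBudelta} to the increasing indicator $1_{E_B}$ to get a conditional lower bound that $E_B$ occurs, absorb the $\Cr{C:box_gap}\,e^{-cr^{\Cr{c:box_gap}}}$ error into the connectivity bound from Lemma~\ref{lem:twopointsbound}, and convert the cost of demanding decoupling-goodness into a $-C' P_c$ correction (where $P_c = \P_{\mathscr C}[\mathcal K_g = K_g, \mathcal K_b = K_b\cup\{k\}]$). So this is not a different route.

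There is, however, a genuine gap in your accounting of the additive corrections, and it is exactly the step that produces $D_1 = C\log\log N$ rather than, say, $(\log N)^C$. When you iterate through the $n = |\mathbb B_J^k|$ boxes of a given scale $J$, you must insert the event $G_{B_m}$ at \emph{each} step in order to invoke \eqref{def:GBudelta}; each such insertion costs one subtraction of $\P_{\mathscr C}[G_{B_m}^c, H] \le P_c$. Since $n$ can be as large as $(\log N)^{\Cr{C:bridge_sup}+1}$ by \eqref{B4}, a naive bookkeeping yields $(\log N)^C$ subtractions per scale --- not one. You assert ``one additive decoupling error per scale'' without saying why, and the claim is not true as a plain count: it becomes true only because the recursion within a scale has the form $b_m \ge q\, b_{m-1} - q\, P_c$ with $q = e^{-C(\delta,v)(\log N)^2}$, so the $P_c$-contributions decay geometrically and sum to $O(1)$ over the scale; one then gets at most $J_k \le C\log\log N$ such bounded contributions across scales. (One might hope to instead group all the $G_B$'s at scale $J$ into a single event $G_J=\bigcap_B G_B$ and subtract $\P[G_J^c, H]$ once; but the conditional decoupling of Proposition~\ref{prop:cond_decoup} is applied one box at a time, and the paper conditions only on $1_{G_{B_m}}$ at the $m$-th step precisely because $1_{G_{B_l}}$ for $l\ne m$ is \emph{not} measurable with respect to the vacancy configuration outside $\widetilde B_m$ --- it encodes excursion endpoint data --- so carrying $1_{G_J}$ in the conditioning would import extraneous information about $\mathcal I^u\cap B_m$ that the decoupling statement does not control.) A second, smaller gap: you apply Lemma~\ref{lem:twopointsbound} at level $u+\delta \le v-\delta$ but do not argue that $\kappa > 0$ in \eqref{eq:cond-u-twopointsbound}; the paper derives this from the hypothesis \eqref{eq:disc-cond} via \cite{MR2744881}, and without it the connectivity lower bound is vacuous.
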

We first suppose that Lemma~\ref{L:explore_bad} holds and give the short
\begin{proof}[Proof of Lemma~\ref{cor:ample_connection}] 
Throughout we assume that $\gamma > 10$ and that $\delta,u,v$ fulfill the assumptions of Lemma~\ref{cor:ample_connection}, or, equivalently, Lemma~\ref{L:explore_bad}.
Fix non-negative integers $m$ and $n$ such that $n + m \leq K$.  Summing 
\eqref{eq:1step} over all disjoint subsets $K_g$ and $K_b$ of $[K]$ with cardinalities $m$ and at most $n$, respectively, and letting $k 
\in [K] \setminus (K_g \cup K_b)$ which is a set with cardinality at least $K - n - m$, one obtains that for all $N \geq C(\delta, v)$ and $b \geq 1$,
\begin{multline}\label{eq:sumstep2}
(m+1)\,\P_{\mathscr C}[|\mathcal{K}_g| = m + 1, |\mathcal{K}_b| \leq n]  \\
 \geq  {D_0}^{-1} (K - n - m) \, \P_{\mathscr C}[|\mathcal{K}_g| = m, |\mathcal{K}_b| \leq n] 
- D_1(n+1) \, \P_{\mathscr C}[|\mathcal{K}_g| = m , |\mathcal{K}_b| \leq n+1], 
\end{multline}
where the factor $m + 1$ (and similarly $n+1$) arises due to the number of  ways of 
decomposing $\mathcal{K}_g$ with $|\mathcal{K}_g| = m + 1$ into a set of cardinality $m$ and a singleton. 
In view of \eqref{eq:sumstep}, let
\begin{equation}
		\label{eq:nk_range}
		n = \lfloor 4K \beta \rfloor \text{ and }\, m \leq 2n.
	\end{equation}
Since $K \ge \sqrt{M} / 400 N$, see above \eqref{prop:a}, and by definition of $M=M(N)$, see \eqref{eq:def_M}, it follows that for all $n$ and $m$ as in \eqref{eq:nk_range} and all $ b \geq 1$, one has $K - n - m  \ge \frac K2$ whenever $N \ge C$. If in addition, $b \ge 2 \Cr{explore_bad}$, this implies that
\begin{equation}\label{eq:D0D1bnd}
\frac{D_0 D_1(n+1)}{(K - n - m)} \leq \frac12\, \text{ and } \, \frac{D_0\,(m+1)}{(K - n - m)} \leq \frac14\,  \text{ for $N \geq C(\delta,v)$}.
\end{equation}
Thus under \eqref{eq:nk_range} and for all $N \ge C(\delta,v)$, $b \ge 2 \Cr{explore_bad}$, \eqref{eq:sumstep2} and \eqref{eq:D0D1bnd} yield that
\begin{equation*}
\begin{split}
 \P_{\mathscr C}[|\mathcal{K}_g| = m] &\le \P_{\mathscr C} [ |\mathcal K_g| = m, |\mathcal K_b| \le n ] + 
 \P_{\mathscr C} [ |\mathcal K_b| > n ] \\
 &\le \frac14\P_{\mathscr C} [ |\mathcal K_g| = m + 1] + \frac12\P_{\mathscr C} [ |\mathcal K_g| = m] + \P_{\mathscr C} [ |\mathcal K_b| > n ]
\end{split}
	\end{equation*}
and consequently, 
\begin{equation}
\label{eq:sumstep3}
 \P_{\mathscr C}[|\mathcal{K}_g| = m] \leq 2^{-1}  \P_{\mathscr C}[|\mathcal{K}_g| = m + 1] + 2 
 \P_{\mathscr C}[ |\mathcal{K}_b| \geq n].
\end{equation}
Now iterating \eqref{eq:sumstep3} $n$ times for each choice of $m$ with $0 \le m < 2n$ and 
then summing the resulting inequalities over $m$ in the range $0 \le m < 2n$ readily implies the bound \eqref{eq:sumstep}.
\end{proof}

We now give the
\begin{proof}[Proof of Lemma~\ref{L:explore_bad}] For $K_g, K_b \subset [K]$ and $k \in [K]$ with 
$K_g\cap K_b=\emptyset$, $k \notin K_g\cup K_b$, let 
\begin{equation*}
H \stackrel{{\rm def}.}{=} \big\{\mathcal K_g  \cap ([K] \setminus \{k\}) = K_g,\  \mathcal K_b  \cap ([K] \setminus \{k\}) = K_b\big\}.
\end{equation*}
It then follows from the definitions of $\mathcal K_b$ and $\mathcal K_g$ in \eqref{e:K_b} and \eqref{e:K_g}, that
\begin{multline}\label{eq:explore_bad_1}
\P_{\mathscr C}[\mathcal{K}_g = K_g \cup \{k\},\, \mathcal{K}_b = K_b]  \\ = \P_{\mathscr C}\left[ E_k, \text{$\L_k$ is good}, H\right] = 
\P_{\mathscr C}\left[ E_k, H\right] - \P_{\mathscr C}\left[ E_k, \text{$\L_k$ is bad}, H\right]\\
\geq \P_{\mathscr C}\big[ E_{j, k} ,  \, 1 \le j < J_k ,  \, H \big] - \P_{\mathscr 
C}\left[\mathcal{K}_g = K_g,\, \mathcal{K}_b=K_b \cup \{k\}\right],
\end{multline}	
where the last line follows upon recalling $E_k$ from \eqref{def:mthcalEB} and observing that the last probability is equal to $ \P_{\mathscr C}\left[\text{$\L_k$ is bad}, H\right]$.
We are going to show that under the assumptions on $\gamma,b,v,\delta, u$ of Lemma~\ref{L:explore_bad} (henceforth tacitly assumed), for each $1 \le J < J_k$ and $N \ge C(\delta,v)$, one has
\begin{multline}\label{eq:explore_bad_single_rnd_bnd}
\P_{\mathscr C}\big[ E_{j, k}, \, 1 \le j \le J, \, H \big]\\ 
\ge 
e^{-C(\delta,v) (\log N)^{2\Cr{C:bridge_sup}}} \,  \P_{\mathscr C}\big[ E_{j, k},  \, 1 \le j < J, \, 
H \big] - 
C' \, \P_{\mathscr C}[\mathcal{K}_g = K_g,\, \mathcal{K}_b=K_b \cup \{k\}]
\end{multline}
(see the paragraph preceding \eqref{eq:gamma} regarding $\Cr{C:bridge_sup}$). Indeed, \eqref{eq:1step} 
follows from \eqref{eq:explore_bad_1} and \eqref{eq:explore_bad_single_rnd_bnd} after the latter is 
iterated over all $1 \le J < J_k$ as $\{\mathcal{K}_g = K_g,\, \mathcal{K}_b = K_b\} \subset H$
and $J_k \le C \log \log N$ by property~\eqref{B4} and our choice of parameters for the bridge (see above \eqref{e:supercrit-good}).

It thus remains to deduce \eqref{eq:explore_bad_single_rnd_bnd}. Recall from \eqref{def:mthcalEB} that $E_{J,k}$ is an intersection over various connection events $E_B$ for $B$ ranging over boxes in $\mathbb B_{J}^k$. We aim to decouple the events $E_B$'s successively by conditioning on `all but the immediate vicinity of $B$'. We proceed to make this precise. We assume to this effect that $n = |\mathbb B_{J}^k|$ and fix an arbitrary ordering $B_1, B_2, \ldots, B_n$ of the elements of $\mathbb B_{J}$. Consider an arbitrary $m$ with $1\leq m \leq n$ and assume that $B_m= B(x,r)$ for some $x \in \Z^d$ and $r \geq 1$. With $\xi=1 - \Cr{c:box_gap}$ as defined above \eqref{e:supercrit-good}, let $\mathcal{F}_m = \mathcal{F}_{B_{4 M} \setminus B(x, r + 
\lceil r^{\xi}\rceil)}$, where $\mathcal{F}_K =\sigma(\eta_j(x), \,\eta_0(x), \, x \in K)$ with $\eta_{\cdot}$ as in \eqref{omega_i} and the index $j$ referring to the (partially sprinkled) configuration used to define $\widetilde{\mathcal{U}}$, see \eqref{Utilde}. We first claim that
\begin{equation}
\label{e:U-tilde_meas}
\{\widetilde{\mathcal U} = \{ \mathscr C\}\} \in \mathcal{F}_m.
\end{equation}
This is because, due to \eqref{eq:Uij}, \eqref{eq:Ui}, \eqref{eq:U_ijbis} and \eqref{Utilde}, $\widetilde{\mathcal U}$ is obtained by grouping (i.e.~forming equivalence classes of) clusters in $\mathcal{C}$, see \eqref{eq:definiq1}, which are clusters of the boundary
$\partial B_{4 M}$ for the configuration $(\eta_0(x): x \in B_{4M})$, according to a grouping rule that depends on $\eta_j$, which in view of \eqref{omega_i}, is only affecting $\eta_0$ outside $V_{2j}=B_{4M-2j \sqrt{M}}$. But since $\Lambda_k$ is well inside $V_{2j}$ by \eqref{Uconnect} and \eqref{prop:a}, so is the box $B(x, r + 
\lceil r^{\xi}\rceil)$ by construction of $\mathbb{B}^k$, and \eqref{e:U-tilde_meas} follows using~\eqref{B1}.

Now, recall from above \eqref{def:ABg} that $G_B = \mathcal G_B^{u, \delta u^{-1}}$ and abbreviate
$$
E_{B_m}^- =  \big\{ E_{j, k} , \, 1 \le j< J, \, E_{B_l}, \, 1\leq l  < m \big\}.
$$
We are now going to argue that whenever $N \geq C(\delta,v)$,
\begin{equation}
\label{eq:explore_bad_conditional_connect}
\begin{split}
	\P_{\mathscr C}&\big[\, E_{B_m}^-, \, E_{B_m}, \, G_{B_m}, \, H \, \big| \, \sigma(\mathcal{F}_m, 1_{G_{B_m}}, 1_{H} )\big] \ge  e^{-C(\delta,v)(\log N)^{2}} \,
	1_{\{ E_{B_m}^-, \, G_{B_m}, \, H\}}.
\end{split}
\end{equation}
To see this, first note that due to \eqref{e:U-tilde_meas}, one can replace $\P_{\mathscr C}$ by $\P$ on the left-hand side of \eqref{eq:explore_bad_conditional_connect}.
Next, one observes that the event $\{ E_{B_m}^-, \, G_{B_m}, \, H\}$  is measurable relative to $\sigma(\mathcal{F}_m, 1_{G_{B_m}}, 1_{H})$ in view of \eqref{def:mthcalEB},~\eqref{B1} and property~\eqref{prop:c} for the family $\Lambda$ of boxes. Then, \eqref{eq:explore_bad_conditional_connect} simply follows from Proposition~\ref{prop:cond_decoup}, item ii), which, as we now explain, yields that 
$$\P\big[ E_{B_m}| \sigma(\mathcal{F}_m, 1_{G_{B_m}}, 1_{H} )] 1_{G_{B_m}}
\stackrel{\eqref{def:GBudelta}, \eqref{eq:twopointsbound}}{\geq}
e^{-C(\delta,v)(\log N)^{2}}1_{G_{B_m}}.$$
Here, in applying \eqref{def:GBudelta}, we used that $G_B = \mathcal G_B^{u, \delta u^{-1}}$, see \eqref{e:-dec-good}, and that the additional conditioning on $(\mathcal{F}_m, 1_{H})$ does not affect the conditional probability as 
$\mathcal I^u \cap B_m$ is independent of $\sigma(\mathcal{F}_m, 1_{H})$ conditionally on 
$\mathcal C_u$ (see also the discussion preceding 
\cite[(7.40)]{RI-I} for a similar argument). As for \eqref{eq:twopointsbound}, we used the fact that $u(1+ \delta u^{-1})= u+ \delta \leq v- \delta$ by assumption on $u$ and picked
\begin{equation}\label{eq:gamma_conn}
\kappa= \kappa(v,\delta)= \inf_r \P\big[\lr{}{\mathcal V^{v-\delta}}{B_r}{\partial B_{2r}}\big]
\end{equation} in the context of Lemma~\ref{lem:twopointsbound}. With theses choices, the lower bound $e^{-C(\delta,v)(\log N)^{2}}$ uniform in $u \leq v-2\delta$ follows immediately by application of \eqref{eq:twopointsbound} since $B_m$ has radius at most $CN$, provided we argue that the hypothesis $\kappa >0$ in \eqref{eq:cond-u-twopointsbound} holds. This follows in turn from our assumption on $v$, by which \eqref{eq:disc-cond} holds for some $\alpha > 0$: indeed if $\kappa$ in 
\eqref{eq:gamma_conn} vanishes then by \cite{MR2744881} one knows that $\P[\lr{}{\mathcal V^{v}}{0}{\partial B_{r}}] \leq C(v)\exp\{-r^c\}$, which precludes \eqref{eq:disc-cond} via a straightforward union bound (the relevant disconnection probability tends to $1$, rather than $0$, let alone with some speed, as required in \eqref{eq:disc-cond}).
Overall, \eqref{eq:explore_bad_conditional_connect} thus follows.

Using \eqref{eq:explore_bad_conditional_connect}, we now complete the proof of 
\eqref{eq:explore_bad_single_rnd_bnd}. To this end, we first notice that
\begin{multline}
	\label{eq:explore_bad_lower_bnd_caio2}
		\P_{\mathscr C}\big[ E_{B_m}^-, \,  G_{B_m}, \, H\big] 		\ge \P_{\mathscr C}\big[ E_j, \, 1 \le j< J , \,  
		E_{B_l}, \, 1 \leq l < m, \, H\big] - \P_{\mathscr C}[G_{B_m}^c, H]\\ 
		\stackrel{\eqref{e:supercrit-good}, \eqref{e:K_b}}{\ge} \P_{\mathscr C}\big[ E_j, \, 1 \le j< J , \,  
		E_{B_l}, \, 1 \leq l < m, \, H\big] - \P_{\mathscr 
			C}\left[\mathcal{K}_g = K_g,\, \mathcal{K}_b=K_b \cup \{k\}\right].
\end{multline}
Now, we integrate \eqref{eq:explore_bad_conditional_connect}, combine it with 
\eqref{eq:explore_bad_lower_bnd_caio2} and then iterate the resulting inequality over all $ m \le n = 
|\mathbb B^k_{J}|$ to deduce \eqref{eq:explore_bad_single_rnd_bnd}. Using the bound
$|\mathbb B_J^k| \le C( \log N)^{\Cr{C:bridge_sup} + 1}$ implied by property~\eqref{B4} and our choice of parameters (see the paragraph above \eqref{e:supercrit-good}), the bound~\eqref{eq:explore_bad_single_rnd_bnd} follows.
\end{proof}

\begin{remark}[Enhanced gluing property] \label{R:final}
 We conclude by collecting the following result, which, as explained below, follows by inspecting our arguments in Sections \ref{Sec:surgery}-\ref{Sec:2lemmas} and is interesting in its own right. For $r \geq 1$, $u,v >0$ and with $M=M(r)$, let
 \begin{equation}\label{eq:GLUE-1}
\widetilde{E} = \widetilde{E}(r, u,v) =\widetilde{E}_1(r,u) \cap \widetilde{E}_2(r,u,v)
\end{equation}
where
\begin{align*}
&\widetilde{E}_1= \left\{\begin{array}{c} \text{each translate $B$ of $B_r$ such that $B \subset B_{4M}$} \\
\text{intersects a cluster $C \subset \mathcal{V}^u$  with $\text{diam}(C) \geq 20M$} \end{array} \right\} \\ 
&\widetilde{E}_2 =\left\{\begin{array}{c}\text{there exist two clusters in $\mathcal V^{u}\cap B_{4M}$ crossing 
$B_{4M}\setminus B_{2M}$} \\ \text{that are not connected to each other in $\mathcal V^{v}\cap B_{4M}$} 
\end{array}\right\}.
 \end{align*}
 The event $\widetilde{E}$ thus expresses an `existence without (weak) uniqueness' property, cf.~\eqref{e:strong-perco}.
 \begin{prop} \label{P:glue-boost}
 For all $\gamma_M \geq \Cr{c:gamma_M}$, if $v>0$ is such that 
\begin{equation} \label{eq:glue-cond-v}
\inf_r  \P\big[\lr{}{\mathcal V^{v}}{B_r}{\partial B_{2r}}\big] >0,
\end{equation} 
then for all $ r \geq 1$, $\delta > 0$, and $u$ with $2\delta \leq u \leq v-2\delta$,  
\begin{equation}
\label{eq:uniqreduct_new1bis}
\begin{split}
&\P[ \widetilde{E}(r,u,u-\delta) ] \leq C\exp \big\{- c \, e^{c (\log M(r))^{c'}}\big\}.
\end{split}
\end{equation}
with constants $C,c$ possibly depending on $\gamma_M$, $v$ and $\delta$.
 \end{prop}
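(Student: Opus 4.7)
The plan is to run the proof of Lemma~\ref{lem:uniqreduct} essentially verbatim, observing that the two components $\widetilde{E}_1$ and $\widetilde{E}_2$ of $\widetilde{E}$ supply exactly the inputs corresponding to the event $A$ of \eqref{eq:supercritA} and the condition ``$U_{j+1}>1$'' in \eqref{e:def-E} respectively, while the hypothesis \eqref{eq:glue-cond-v} stands in for the use of \eqref{eq:disc-cond} in Lemmas~\ref{L:bound_bad} and~\ref{cor:ample_connection}. First I would set $N = \lfloor r/10\rfloor$ so that $M(N) \asymp M(r)$ (up to adjusting $\gamma_M$), and check that on $\widetilde{E}_1$ any $\mathcal{V}^u$-cluster of diameter at least $20M$ exceeds the diameter of $B_{6M}$ and therefore reaches $\partial B_{6M}$, supplying the event $A$; moreover, every connected component of such a cluster's intersection with $B_{4M}$ touches $\partial B_{4M}$, since any path inside the cluster from such a component to a point outside $B_{4M}$ (which exists as the cluster exceeds $\operatorname{diam}(B_{4M})$) must cross $\partial B_{4M}$. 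Letting $\widetilde{\mathcal{C}}$ be the collection of clusters of $\mathcal{V}^u \cap B_{4M}$ touching $\partial B_{4M}$ (the analogue of $\mathcal{C}$ in \eqref{eq:definiq1}) and $\sim$ the equivalence on $\widetilde{\mathcal{C}}$ induced by local $\mathcal{V}^v \cap B_{4M}$-connection as in \eqref{eq:equiv}, we deduce that $\widetilde{E}_1$ ensures every $r$-box in $B_{4M}$ intersects some $C \in \widetilde{\mathcal{C}}$, while $\widetilde{E}_2$ ensures $|\widetilde{\mathcal{C}}/\!\sim| \geq 2$ with at least two classes represented by a cluster crossing $B_{4M} \setminus B_{2M}$.

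Next I would decompose $\P[\widetilde{E}]$ over realizations $\{\mathscr{C}\}$ of $\widetilde{\mathcal{C}}/\!\sim$ and over singleton partitions $\{\mathscr{C}\} = \{\mathscr{C}\}_1 \sqcup \{\mathscr{C}\}_2$ with $\{\mathscr{C}\}_1$ a class containing a crossing representative (at most $|\widetilde{\mathcal{C}}| \leq CM^{d-1}$ such partitions). With $S_i = \bigcup_{C \in \{\mathscr{C}\}_i} C$, the hypotheses of Lemma~\ref{L:goodpath_supercrit} are satisfied on a sub-annulus of $B_{4M} \setminus B_{2M}$: the covering of every $r$-box by some element of $\widetilde{\mathcal{C}}\supset\{\mathscr{C}\}$ follows from the previous paragraph, and each $\{\mathscr{C}\}_i$ contains a crossing cluster by construction. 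The extraction of contact zones $\{\Lambda_k\}$ as in \eqref{prop:a}--\eqref{prop:c}, the bridge construction of Section~\ref{sec:bridging}, the good/bad box counts (Lemmas~\ref{L:bound_bad} and~\ref{cor:ample_connection}), and the sprinkled finite-energy surgery culminating in \eqref{eq:decay_disconnect} then all go through verbatim, bounding $\P_{\mathscr{C}}[\nlr{B_{4M}}{\mathcal{V}^{u-\delta}}{S_1}{S_2}]$ by $\P_{\mathscr{C}}\big[\bigcap_k \{\nlr{\widetilde{\Lambda}_k}{\mathcal{V}^{u-\delta}}{S_1}{S_2}\}\big]$ as in \eqref{eq:repeated_encounter}.

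The main obstacle, though ultimately a minor adaptation, is verifying that the connectivity input \eqref{eq:gamma_conn} used in the proof of Lemma~\ref{L:explore_bad} remains available. In the original argument the positivity of $\kappa$ there followed by contradiction from \eqref{eq:disc-cond} combined with~\cite{MR2744881}; here, \eqref{eq:glue-cond-v} supplies this positivity at level $v$ directly, and the monotonicity $\mathcal{V}^{u+\delta} \supset \mathcal{V}^{v}$ (valid for $u \leq v-2\delta$) transfers it to the intermediate level $u+\delta$ needed in Lemma~\ref{lem:twopointsbound}. Summing over the polynomially many partitions and realizations preserves the stretched-exponential decay of the one-partition bound obtained as in \eqref{e:PE-final}, yielding \eqref{eq:uniqreduct_new1bis}.
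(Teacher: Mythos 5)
Your two key observations are correct and are exactly the new ingredients that the paper needs: first, that a cluster in $\widetilde{E}_1$ of diameter $\geq 20M$ must reach $\partial B_{6M}$ (and that each component of its intersection with $B_{4M}$ touches $\partial B_{4M}$), supplying the event $A$; and second, that the hypothesis \eqref{eq:glue-cond-v} directly furnishes $\kappa > 0$ in \eqref{eq:gamma_conn}, replacing the contradiction argument via \cite{MR2744881}, which is indeed the \emph{only} place the assumption on $v$ enters the machinery (inside Lemma~\ref{L:explore_bad}).

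However, the route you take through the surgery deviates from the paper and, as written, has a genuine gap in the conditioning scheme. You propose to condition on $\widetilde{\mathcal C}/\!\sim$ directly and take singleton partitions. If $\sim$ is induced by $\mathcal V^{u-\delta}$-connection (which is what is needed to deduce $|\widetilde{\mathcal C}/\!\sim|\geq 2$ from $\widetilde E_2$), then conditioning on $\widetilde{\mathcal C}/\!\sim$ makes $\{\nlr{}{\mathcal V^{u-\delta}}{S_1}{S_2}\}$ hold with conditional probability one, so the surgery — which attempts to \emph{open} a connection in $\mathcal V^{u-\delta}$ between $S_1$ and $S_2$ — is trying to produce an event of conditional probability zero; this is precisely the freeze-out effect from \eqref{eq:no-FE} that the paper is careful to circumvent. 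If instead $\sim$ is induced by $\mathcal V^v$-connection (so the conditioning is milder), then the singleton $\mathcal V^v$-class of one crossing cluster need not be $\mathcal V^{u-\delta}$-disconnected from the union of the other classes — some other cluster may well be $\mathcal V^{u-\delta}$-connected but not $\mathcal V^v$-connected to it — so $\widetilde E_2$ no longer gives the disconnection event you want to bound. Moreover, the claim that Lemmas~\ref{L:bound_bad} and~\ref{cor:ample_connection} "go through verbatim" is unjustified under your conditioning: the measurability statements \eqref{e:U-tilde_meas}, \eqref{eq:tildeKgmble} and \eqref{eq:big_mblity} rely crucially on the fact that $\widetilde{\mathcal U}(\eta_j)$ reveals only $\mathcal V^u$ (not any sprinkled level) inside $V_{2j}$ where the bridges live, and only $\mathcal V^{u-\delta}$ far away; your $\widetilde{\mathcal C}/\!\sim$ depends on a sprinkled level throughout $B_{4M}$, including in the bridge region, and the required measurability would not hold.

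The paper's actual proof sidesteps all of this: it simply observes that $\widetilde E$ implies the event $A\cap\{U_{\lfloor\sqrt M\rfloor}>1\}$ from the proof of Proposition~\ref{prop:uniq} (which you almost show), then reuses the existing bound \eqref{eq:reduct2} obtained by iterating Lemma~\ref{lem:uniqreduct} — never re-running the surgery under a new conditioning, only re-checking the single connectivity input around \eqref{eq:gamma_conn}. If you restrict your argument to that reduction plus your $\kappa$-observation, you recover the paper's proof; as currently stated, the replacement of the $\eta_j$-partial-sprinkling framework by a direct $\widetilde{\mathcal C}/\!\sim$-conditioning does not survive scrutiny.
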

In particular, the condition \eqref{eq:glue-cond-v} on $v$ relaxes the assumption on $v$ inherent to Sections~\ref{sec:supercritical} onwards, which is tailored to the needs of Theorem~\ref{T:MAIN}. Proposition~\ref{P:glue-boost} can be obtained by following essentially the line of argument in the proof of Proposition~\ref{prop:uniq} upon making the following observations. The event $\widetilde E$ in \eqref{eq:GLUE-1} is roughly equivalent (in fact it implies) the event $A\cap\{{U}_{\lfloor\sqrt{M}\rfloor} >1\}$ appearing in \eqref{eq:uniqueproof}. The latter is then bounded by \eqref{eq:reduct2} in the course of proving Proposition~\ref{prop:uniq} with the help of Lemma~\ref{lem:uniqreduct}, the key gluing lemma. Crucially, the latter continues to hold under the modified assumption \eqref{eq:glue-cond-v} on $v$. Indeed, upon inspecting Sections~\ref{Sec:surgery}-\ref{Sec:2lemmas}, one sees that the only place where this is used is in the proof of Lemma~\ref{L:explore_bad} to exhibit the cost \eqref{eq:explore_bad_conditional_connect} of re-constructing a piece of path inside a box of the bridge at a not too degenerate cost, uniform in $u (\leq v-2\delta)$. As explained around \eqref{eq:gamma_conn}, this follows by combining conditional decoupling (which always holds) and a connectivity lower bound from Lemma~\ref{lem:twopointsbound}, which only requires \eqref{eq:glue-cond-v}. 
\end{remark}

\bigskip

\noindent \textbf{Acknowledgements.} This work was initiated at IH\'ES. It has received funding from the European Research Council 
(ERC) under the European Union's Horizon 2020 research and innovation programme, grant 
agreement No.~757296. AT acknowledges support by grants
``Projeto Universal'' (406250/2016-2) and ``Produtividade em Pesquisa'' (304437/2018-2) from
CNPq and ``Jovem Cientista do Nosso Estado'', (202.716/2018) from FAPERJ. The research of FS is currently supported by the ERC under the European Union's Horizon 2020 research and innovation programme, grant agreement No.~851565. PFR thanks the IMO in Orsay (including support from ERC grant agreement No.~740943) for its hospitality during the final stages of this project. SG’s research is supported by the SERB grant 
SRG/2021/000032, a grant from the Department of Atomic Energy, Government of India, under project 
12–R\&D–TFR–5.01–0500 and in part by a grant from the Infosys Foundation as a member of the 
Infosys-Chandrasekharan virtual center for Random Geometry.  HDC acknowledges funding from the NCCR SwissMap, the Swiss FNS, and 
the Simons collaboration on localization of waves. SG, PFR, FS and AT all thank the University of Geneva for its hospitality on several occasions.

\bibliography{biblicomplete}

\begin{thebibliography}{10}

\bibitem{AizKesNew87}
M.~Aizenman, H.~Kesten, and C.~M. Newman.
\newblock Uniqueness of the infinite cluster and continuity of connectivity
  functions for short and long range percolation.
\newblock {\em Comm. Math. Phys.}, 111(4):505--531, 1987.

\bibitem{CaioSerguei2018}
C.~Alves and S.~Popov.
\newblock Conditional decoupling of random interlacements.
\newblock {\em ALEA Lat. Am. J. Probab. Math. Stat.}, 15(2):1027--1063, 2018.

\bibitem{AntPis96}
P.~Antal and A.~Pisztora.
\newblock On the chemical distance for supercritical {B}ernoulli percolation.
\newblock {\em Ann. Probab.}, 24(2):1036--1048, 1996.

\bibitem{MR3634283}
I.~Benjamini and V.~Tassion.
\newblock Homogenization via sprinkling.
\newblock {\em Ann. Inst. Henri Poincar\'{e} Probab. Stat.}, 53(2):997--1005,
  2017.

\bibitem{zbMATH02164755}
T.~Bodineau.
\newblock Slab percolation for the {Ising} model.
\newblock {\em Probab. Theory Relat. Fields}, 132(1):83--118, 2005.

\bibitem{cerf2015}
R.~Cerf.
\newblock A lower bound on the two-arms exponent for critical percolation on
  the lattice.
\newblock {\em Ann. Probab.}, 43(5):2458--2480, 09 2015.

\bibitem{zbMATH07226362}
A.~Chiarini and M.~Nitzschner.
\newblock Entropic repulsion for the occupation-time field of random
  interlacements conditioned on disconnection.
\newblock {\em Ann. Probab.}, 48(3):1317--1351, 2020.

\bibitem{https://doi.org/10.48550/arxiv.2107.06326}
D.~Contreras, S.~Martineau, and V.~Tassion.
\newblock Supercritical percolation on graphs of polynomial growth.
\newblock {\em Preprint}, arXiv:2107.06326, 2021.

\bibitem{MR1384041}
J.-D. Deuschel and A.~Pisztora.
\newblock Surface order large deviations for high-density percolation.
\newblock {\em Probab. Theory Related Fields}, 104(4):467--482, 1996.

\bibitem{drewitz2018geometry}
A.~Drewitz, A.~Pr{\'e}vost, and P.-F. Rodriguez.
\newblock Geometry of {G}aussian free field sign clusters and random
  interlacements.
\newblock {\em Preprint}, arXiv:1811.05970, 2018.

\bibitem{DPR22}
A.~Drewitz, A.~Pr{\'e}vost, and P.-F. Rodriguez.
\newblock Critical exponents for a percolation model on transient graphs.
\newblock {\em Invent. Math.}, 232(1):229--299, 2023.

\bibitem{MR3269990}
A.~Drewitz, B.~R\'ath, and A.~Sapozhnikov.
\newblock Local percolative properties of the vacant set of random
  interlacements with small intensity.
\newblock {\em Ann. Inst. Henri Poincar\'e Probab. Stat.}, 50(4):1165--1197,
  2014.

\bibitem{MR3390739}
A.~Drewitz, B.~R\'ath, and A.~Sapozhnikov.
\newblock On chemical distances and shape theorems in percolation models with
  long-range correlations.
\newblock {\em J. Math. Phys.}, 55(8):083307, 30, 2014.

\bibitem{DCGR20}
H.~Duminil-Copin, S.~Goswami, and A.~Raoufi.
\newblock Exponential decay of truncated correlations for the {I}sing model in
  any dimension for all but the critical temperature.
\newblock {\em Comm. Math. Phys.}, 374(2):891--921, 2020.

\bibitem{DCGRS20}
H.~Duminil-Copin, S.~Goswami, P.-F. Rodriguez, and F.~Severo.
\newblock Equality of critical parameters for percolation of {Gaussian} free
  field level sets.
\newblock {\em Duke Math. J.}, 172(5):839--913, 2023.

\bibitem{RI-III}
H.~Duminil-Copin, S.~Goswami, P.-F. Rodriguez, F.~Severo, and A.~Teixeira.
\newblock Finite-range interlacements and couplings.
\newblock {\em Preprint}, arXiv:2308.07303, 2023.

\bibitem{RI-I}
H.~Duminil-Copin, S.~Goswami, P.-F. Rodriguez, F.~Severo, and A.~Teixeira.
\newblock Phase transition for the vacant set of random walk and random
  interlacements.
\newblock {\em Preprint}, 2023.

\bibitem{DumTas15}
H.~{Duminil-Copin} and V.~Tassion.
\newblock A new proof of the sharpness of the phase transition for {B}ernoulli
  percolation and the {I}sing model.
\newblock {\em Commun.~Math.~Phys.}, 343(2):725--745, 2016.

\bibitem{GanGriRus88}
A.~Gandolfi, G.~Grimmett, and L.~Russo.
\newblock On the uniqueness of the infinite cluster in the percolation model.
\newblock {\em Comm. Math. Phys.}, 114(4):549--552, 1988.

\bibitem{gosrodsev2021radius}
S.~Goswami, P.-F. Rodriguez, and F.~Severo.
\newblock {On the radius of Gaussian free field excursion clusters}.
\newblock {\em Ann.~Probab.}, 50(5):1675 -- 1724, 2022.

\bibitem{GRS23+}
S.~Goswami, P.-F. Rodriguez, and Y.~Shulzhenko.
\newblock {\em In preparation}, 2023.

\bibitem{GriMar90}
G.~R. Grimmett and J.~M. Marstrand.
\newblock The supercritical phase of percolation is well behaved.
\newblock {\em Proc. Roy. Soc. London Ser. A}, 430(1879):439--457, 1990.

\bibitem{zbMATH05728593}
O.~H{\"a}ggstr{\"o}m and J.~Jonasson.
\newblock Uniqueness and non-uniqueness in percolation theory.
\newblock {\em Probab. Surv.}, 3:289--344, 2006.

\bibitem{zbMATH06257634}
X.~Li and A.-S. Sznitman.
\newblock A lower bound for disconnection by random interlacements.
\newblock {\em Electron. J. Probab.}, 19:26, 2014.
\newblock Id/No 17.

\bibitem{zbMATH01496108}
R.~Lyons and O.~Schramm.
\newblock Indistinguishability of percolation clusters.
\newblock {\em Ann. Probab.}, 27(4):1809--1836, 1999.

\bibitem{zbMATH07227743}
M.~Nitzschner and A.-S. Sznitman.
\newblock Solidification of porous interfaces and disconnection.
\newblock {\em J. Eur. Math. Soc. (JEMS)}, 22(8):2629--2672, 2020.

\bibitem{PopTeix}
S.~Popov and A.~Teixeira.
\newblock Soft local times and decoupling of random interlacements.
\newblock {\em J. Eur. Math. Soc. (JEMS)}, 17(10):2545--2593, 2015.

\bibitem{MR3568036}
E.~B. Procaccia, R.~Rosenthal, and A.~Sapozhnikov.
\newblock Quenched invariance principle for simple random walk on clusters in
  correlated percolation models.
\newblock {\em Probab. Theory Related Fields}, 166(3-4):619--657, 2016.

\bibitem{10.1214/ECP.v20-3734}
B.~R{\'a}th.
\newblock {A short proof of the phase transition for the vacant set of random
  interlacements}.
\newblock {\em Electronic Communications in Probability}, 20(none):1 -- 11,
  2015.

\bibitem{MR2819660}
B.~R\'ath and A.~Sapozhnikov.
\newblock On the transience of random interlacements.
\newblock {\em Electron. Commun. Probab.}, 16:379--391, 2011.

\bibitem{MR3650417}
A.~Sapozhnikov.
\newblock Random walks on infinite percolation clusters in models with
  long-range correlations.
\newblock {\em Ann. Probab.}, 45(3):1842--1898, 2017.

\bibitem{AHL_2022__5__987_0}
F.~Severo.
\newblock Sharp phase transition for {Gaussian} percolation in all dimensions.
\newblock {\em Annales Henri Lebesgue}, 5:987--1008, 2022.

\bibitem{severo2022uniqueness}
F.~Severo.
\newblock Uniqueness of unbounded component for level sets of smooth {G}aussian
  fields.
\newblock {\em Preprint}, arXiv:2208.04340, 2022.

\bibitem{MR2512613}
V.~Sidoravicius and A.-S. Sznitman.
\newblock Percolation for the vacant set of random interlacements.
\newblock {\em Comm. Pure Appl. Math.}, 62(6):831--858, 2009.

\bibitem{MR2744881}
V.~Sidoravicius and A.-S. Sznitman.
\newblock Connectivity bounds for the vacant set of random interlacements.
\newblock {\em Ann. Inst. Henri Poincar\'e Probab. Stat.}, 46(4):976--990,
  2010.

\bibitem{MR1232192}
E.~M. Stein.
\newblock {\em Harmonic analysis: real-variable methods, orthogonality, and
  oscillatory integrals}, volume~43 of {\em Princeton Mathematical Series}.
\newblock Princeton University Press, Princeton, NJ, 1993.

\bibitem{MR2680403}
A.-S. Sznitman.
\newblock Vacant set of random interlacements and percolation.
\newblock {\em Ann. of Math. (2)}, 171(3):2039--2087, 2010.

\bibitem{MR2891880}
A.-S. Sznitman.
\newblock Decoupling inequalities and interlacement percolation on
  {$G\times\mathbb{Z}$}.
\newblock {\em Invent. Math.}, 187(3):645--706, 2012.

\bibitem{MR3602841}
A.-S. Sznitman.
\newblock Disconnection, random walks, and random interlacements.
\newblock {\em Probab. Theory Related Fields}, 167(1-2):1--44, 2017.

\bibitem{zbMATH07114721}
A.-S. Sznitman.
\newblock On macroscopic holes in some supercritical strongly dependent
  percolation models.
\newblock {\em Ann. Probab.}, 47(4):2459--2493, 2019.

\bibitem{zbMATH07483480}
A.-S. Sznitman.
\newblock Excess deviations for points disconnected by random interlacements.
\newblock {\em Probab. Math. Phys.}, 2(3):563--611, 2021.

\bibitem{zbMATH07395560}
A.-S. Sznitman.
\newblock On the {{\(C^1\)}}-property of the percolation function of random
  interlacements and a related variational problem.
\newblock In {\em In and out of equilibrium 3: celebrating Vladas
  Sidoravicius}, pages 775--796. Cham: Birkh{\"a}user, 2021.

\bibitem{https://doi.org/10.48550/arxiv.2105.12110}
A.-S. Sznitman.
\newblock On the cost of the bubble set for random interlacements.
\newblock {\em Invent. Math.}, 233(2):903--950, 2023.

\bibitem{MR2525105}
A.~Teixeira.
\newblock Interlacement percolation on transient weighted graphs.
\newblock {\em Electron. J. Probab.}, 14:no. 54, 1604--1628, 2009.

\bibitem{MR2498684}
A.~Teixeira.
\newblock On the uniqueness of the infinite cluster of the vacant set of random
  interlacements.
\newblock {\em Ann. Appl. Probab.}, 19(1):454--466, 2009.

\bibitem{Tei11}
A.~Teixeira.
\newblock On the size of a finite vacant cluster of random interlacements with
  small intensity.
\newblock {\em Probab.~Theor.~Rel.~Fields}, 150(3):529--574, 2011.

\bibitem{MR2838338}
A.~Teixeira and D.~Windisch.
\newblock On the fragmentation of a torus by random walk.
\newblock {\em Comm. Pure Appl. Math.}, 64(12):1599--1646, 2011.

\end{thebibliography}
\bibliographystyle{abbrv}

\end{document}